\definecolor{skyblue}{rgb}{0.85,0.85,1}
\numberwithin{equation}{section}
\numberwithin{figure}{section}
\newtheorem{theorem}{Theorem}[section]
\newtheorem{lemma}[theorem]{Lemma}
\newtheorem{prop}[theorem]{Proposition}
\newtheorem{cor}[theorem]{Corollary}
\theoremstyle{definition}
\newtheorem{define}[theorem]{Definition}
\crefname{define}{definition}{definitions}
\theoremstyle{remark}
\newtheorem{rem}[theorem]{Remark}
\DeclareMathOperator{\dom}{dom}
\DeclareMathOperator{\supp}{supp}
\newcommand{\bbN}{\mathbb{N}}
\newcommand{\bbR}{\mathbb{R}}
\newcommand{\bbZ}{\mathbb{Z}}
\renewcommand{\Im}{\operatorname{Im}}
\newcommand{\cP}{\mathcal{P}}
\newcommand{\pO}{\partial \Omega}
\newcommand{\p}{\partial}
\newcommand{\DG}{\Delta^{\!\Gamma}}
\newcommand{\DP}{\Delta^{\!\partial P}}
\newcommand{\PPk}{\mathcal P_k} %piecewise C^1 k-partitions
\newcommand{\bG}{b_{_{\Gamma}}}
\begin{document}

\title{Homology of spectral minimal partitions}

\author[G. Berkolaiko]{Gregory Berkolaiko}
\address{Department of
  Mathematics, Texas A\&M University, College Station, TX 77843-3368, USA}
\email{berko@math.tamu.edu}

\author[Y. Canzani]{Yaiza Canzani}
\address{Department of Mathematics, University of North Carolina at Chapel Hill,
Phillips Hall, Chapel Hill, NC  27599, USA}
\email{canzani@email.unc.edu}

\author[G. Cox]{Graham Cox}
\address{Department of Mathematics and Statistics, Memorial University of Newfoundland, St. John's, NL A1C 5S7, Canada}
\email{gcox@mun.ca}

\author[J.L. Marzuola]{Jeremy L. Marzuola}
\address{Department of Mathematics, University of North Carolina at Chapel Hill,
Phillips Hall, Chapel Hill, NC  27599, USA}
\email{marzuola@math.unc.edu}

\begin{abstract}
A spectral minimal partition of a manifold is its decomposition into disjoint open sets that minimizes a spectral energy functional. It is known that bipartite spectral minimal partitions coincide with nodal partitions of Courant-sharp Laplacian eigenfunctions.  However, almost all minimal partitions are non-bipartite.  To study those, we define a modified Laplacian operator and prove that the nodal partitions of its Courant-sharp eigenfunctions are minimal within a certain topological class of partitions. This yields new results in the non-bipartite case and recovers the above known result in the bipartite case. Our approach is based on tools from algebraic topology, which we illustrate by a number of examples where the topological types of partitions are characterized by relative homology.
\end{abstract}

\maketitle
%\tableofcontents

\section{Introduction}

Let $M$ be a compact, oriented surface, with or without boundary, endowed with a Riemannian metric. A \textit{$k$-partition of $M$} is a mutually disjoint collection  $P = \{\Omega_i\}_{i=1}^k$ of nonempty, open, connected subsets of $M$. Letting $\lambda_1(\Omega_i)$ denote the smallest eigenvalue of the Dirichlet Laplacian on $\Omega_i$, we define the \textit{energy}
\begin{equation}
\label{Lambda}
	\Lambda(P) = \max_{1\leq i\leq k} \lambda_1(\Omega_i)
\end{equation}
and say that $P$ is \textit{minimal} if $\Lambda(P) \leq \Lambda(\tilde P)$ for every $k$-partition $\tilde P$.

%new stuff
Minimal partitions arise in a wide variety of applications. They first appeared in the study of free boundary variational problems \cite{ACF} and spatially segregated solutions to reaction--diffusion systems \cite{CTV2005,CTV2005s}. 
%Their connection to nodal partitions was first observed in \cite{HHO2007} and clarified in \cite{HHOT}.
In \cite{royo2014segregation} minimal partitions were considered in relation to ground states of multi-component Bose--Einstein condensates, and in \cite{miclo2015hyperboundedness} a version of the minimal spectral partition problem was used to prove the existence of spectral gaps for ergodic reversible Markov operators.  In \cite{osting2017consistency}, spectral minimal partitions on manifolds have been shown to arise as a high density limit of related partitioning schemes proposed on graphs \cite{osting2014minimal,zosso2015dirichlet,wang2019diffusion}.

It is known that minimal $k$-partitions exist for each $k$ and are suitably regular; see \cite{HHOT} and references therein. However, rigorously finding minimal partitions is quite difficult in general. An important result is the following, where we recall that a partition is \textit{bipartite} if each of the domains $\Omega_i$ can be assigned one of two colors in such a way that neighboring domains have different colors ($\Omega_i$ and $\Omega_j$ are \emph{neighbors} if $\mbox{int} (\overline{\Omega_i \cup \Omega_j}) \neq \Omega_i \cup \Omega_j$).

\begin{theorem}[Helffer, Hoffmann-Ostenhof, Terracini \cite{HHOT}]
\label{thm:bi}
A bipartite $k$-partition is minimal if and only if it is the nodal partition of a $k$-th (i.e.\ Courant sharp) eigenfunction of the Laplacian.
\end{theorem}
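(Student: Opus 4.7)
The plan is to deduce both implications from the Courant-Fischer min-max principle together with a gluing construction enabled by bipartiteness. For any $k$-partition $Q = \{U_i\}$, let $\psi_i$ be the positive $L^2$-normalized Dirichlet ground state on $U_i$, extended by zero to $M$. Because the $\psi_i$ have pairwise disjoint supports, they span a $k$-dimensional subspace $V_Q \subset H^1_0(M)$ on which the Rayleigh quotient $R(f) = \|df\|^2/\|f\|^2$ is bounded by $\Lambda(Q) = \max_i \lambda_1(U_i)$; consequently $\lambda_k \le \Lambda(Q)$ for every $k$-partition $Q$.

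For the ``if'' direction, suppose $P = \{\Omega_i\}$ is the nodal partition of a $k$-th Courant-sharp eigenfunction $\varphi$. Because $\varphi$ is sign-definite on each $\Omega_i$ and satisfies Dirichlet boundary conditions there with eigenvalue $\lambda_k$, its restriction is proportional to $\psi_i$ and $\lambda_1(\Omega_i) = \lambda_k$ for every $i$, so $\Lambda(P) = \lambda_k$. Combined with $\lambda_k \le \Lambda(\tilde P)$ for every competing $k$-partition $\tilde P$, this gives $\Lambda(P) \le \Lambda(\tilde P)$ and shows $P$ is minimal.

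For the ``only if'' direction, let $P = \{\Omega_i\}$ be a bipartite minimal $k$-partition with two-coloring $\epsilon_i \in \{\pm 1\}$. First, minimality forces $\lambda_1(\Omega_i) = \Lambda(P)$ for every $i$: otherwise, a Hadamard-type deformation pushing the boundary of a maximizing $\Omega_j$ into a neighbor with strictly smaller first eigenvalue would strictly decrease $\Lambda$ by domain monotonicity, contradicting minimality. Next, form the glued function $u = \sum_i \epsilon_i \psi_i \in H^1_0(M)$; it has Rayleigh quotient $\Lambda(P)$ and its nodal partition is exactly $P$.

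The main obstacle is to upgrade $u$ to a \emph{global} weak eigenfunction of $-\Delta$ with eigenvalue $\Lambda(P)$. Integration by parts on each $\Omega_i$ against a test function $v$ produces boundary contributions of the form $(\epsilon_i \p_\nu \psi_i + \epsilon_j \p_\nu \psi_j) v$ along each shared interface $\p\Omega_i \cap \p\Omega_j$. Bipartiteness forces $\epsilon_i = -\epsilon_j$ there, so the obstruction reduces to the magnitude-matching condition $|\p_\nu \psi_i| = |\p_\nu \psi_j|$. This equal-derivative condition is a first-order necessary condition of minimality: on any regular arc where it failed, the Hadamard formula for the principal Dirichlet eigenvalue would provide a compactly supported normal deformation of the interface that strictly decreases $\Lambda$, contradicting minimality of $P$. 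Once the interface terms vanish, $u$ is a weak $\Lambda(P)$-eigenfunction with exactly $k$ nodal domains $\{\Omega_i\}$. Courant's nodal theorem then gives $\Lambda(P) \ge \lambda_k$, and the reverse inequality $\Lambda(P) \le \lambda_k$ follows by a standard refinement/competition argument: any strict inequality $\Lambda(P) > \lambda_k$ would, using the eigenfunctions $\varphi_1,\dots,\varphi_k$ whose span has Rayleigh quotient at most $\lambda_k$, allow the construction of a $k$-partition of energy strictly less than $\Lambda(P)$, contradicting minimality. Hence $\Lambda(P) = \lambda_k$ and $u$ is a $k$-th Courant-sharp eigenfunction whose nodal partition is $P$.
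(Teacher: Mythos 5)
Theorem~\ref{thm:bi} is \emph{cited} from \cite{HHOT}; the paper does not prove it.  The only overlap with the paper's own arguments is the ``if'' direction, which the paper recovers as the special case $\Gamma=\varnothing$ of Theorem~\ref{thm:main} (see Remark~\ref{rem:unitary}).  Your proof of that direction, via the $k$-dimensional test subspace spanned by the extended ground states $\psi_i$, is exactly this min--max argument, and it is correct.

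Your ``only if'' direction, however, has two real gaps.

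First, the equal--normal--derivative condition cannot be obtained by the one-interface Hadamard variation you describe.  Once the equi-partition $\lambda_1(\Omega_i)\equiv\Lambda(P)$ has been established (which you need first), a normal deformation of a single interface $\p\Omega_i\cap\p\Omega_j$ by a field $f\nu_i$ changes the two adjacent eigenvalues to first order by
\[
\frac{d}{dt}\lambda_1(\Omega_i) = -\int f\,|\p_\nu\psi_i|^2, \qquad
\frac{d}{dt}\lambda_1(\Omega_j) = +\int f\,|\p_\nu\psi_j|^2,
\]
so one rises and the other falls \emph{regardless} of whether $|\p_\nu\psi_i|=|\p_\nu\psi_j|$.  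Since all the $\lambda_1(\Omega_l)$ start equal to $\Lambda(P)$, the maximum strictly increases to first order in either direction; no such deformation decreases $\Lambda$.  (Your argument would show the sum $\sum_i\lambda_1(\Omega_i)$ fails to be critical, but the functional being minimized is the max, not the sum.)  In \cite{HHOT} the derivative-matching property of minimal partitions is a consequence of the free-boundary regularity theory of Conti--Terracini--Verzini and Caffarelli--Lin, reached through a $p$-norm relaxation of the max, not a direct first-variation computation.

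Second, the final step $\Lambda(P)\le\lambda_k$ is asserted but not proved.  Courant and the min--max both give $\lambda_k\le\mu:=\Lambda(P)$, i.e.\ they point the same way; to conclude $\mu=\lambda_k$ you need the reverse inequality, which is exactly the Courant-sharpness you are trying to establish.  The ``refinement/competition'' sketch does not deliver it: the $k$-th eigenfunction $\varphi_k$ need not have $k$ nodal domains, and splitting a nodal domain strictly raises the ground-state energy of the pieces, so a refinement of the nodal partition of $\varphi_k$ can easily have energy above $\lambda_k$ (indeed above $\mu$).  On the disk with $k=3$ there is no $3$-partition of energy $\le\lambda_3$ at all, which shows that the inequality $\mathfrak{L}_k\le\lambda_k$ is false in general and only holds under the bipartiteness hypothesis you have not yet used at this point.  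This step also requires the structure theory of \cite{HHOT} and is not a ``standard'' argument.

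So: the ``if'' half is fine and coincides with what the paper extracts from Theorem~\ref{thm:main}; the ``only if'' half as written is not a proof.
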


See \eqref{SPosition} for the precise definition of Courant sharp. A given Riemannian manifold can only have a finite number of Courant sharp eigenfunctions \cite{P56}, so it follows that most minimal partitions are not bipartite, but rigorously identifying them is a difficult problem.  For instance, while it is known that the minimal 3-partitions of the square and the disk cannot be bipartite, and the minimizing partitions have been conjectured, they have not been rigorously verified except in the presence of a priori assumptions on their structure. See \cite{bonnaillie2015nodal} for a relatively up-to-date summary of the literature.

Our main result, \Cref{thm:main}, extends \Cref{thm:bi} to the non-bipartite case by establishing minimality within a certain class of related partitions. This class is defined using a concept from algebraic topology, namely the homology class of the \emph{boundary set} $\p P = \cup_i \overline{\pO_i\backslash\p M}$ of a partition $P = \{\Omega_i\}$.  We thus develop a new tool for studying non-bipartite partitions and also provide further insight into the bipartite case, as bipartite partitions are precisely those partitions whose boundary sets are null homologous.

To describe the partitions under consideration, recall that a $C^1$ curve is said to be \emph{regular} if its velocity never vanishes.

\begin{define}
\label{def:regSigma}
Let $M$ be a compact surface with piecewise $C^1$ (or empty) boundary. A closed set $\Gamma \subset M$ is a \emph{piecewise $C^1$ cut} if it is the image of a finite set of regular $C^1$ curves that intersect one other (and $\p M$) transversely, and only do so at their endpoints.
\end{define}

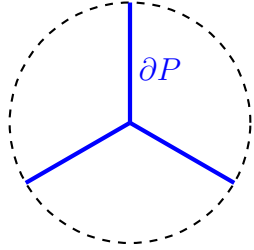
\begin{figure}
\begin{center}
\begin{tikzpicture}[ scale=0.8]
	\draw[thick,dashed] (2,0) arc[radius=2, start angle=0, end angle=360];
	\draw[ultra thick, blue] (0,0) -- (0,2);
	\draw[ultra thick, blue] (0,0) -- ({2*cos(210)},{2*sin(210)});
	\draw[ultra thick, blue] (0,0) -- ({2*cos(330)},{2*sin(330)});
	\node[blue] at (0.5,0.9) {$\p P$};
\end{tikzpicture}
\end{center}
\caption{The conjectured (but unproven) minimal 3-partition of the disk.}
\label{fig:Mercedes}
\end{figure}

\begin{rem}
The $C^1$ assumption simplifies exposition and is justified by the fact that the boundary set of a minimal partition is a piecewise $C^1$ cut \cite{HHOT}.  However, the constructions and results described here extend to cuts $\Gamma$ that are merely continuous, see \Cref{sec:cts}.
\end{rem}

We also need non-bipartite versions of ``nodal partitions" and ``Courant sharp eigenfunctions" that were introduced in \cite{BCHS}. Given a piecewise $C^1$ cut $\Gamma$, we define an operator $-\DG$ acting as the (positive-definite) Laplacian on the space of functions on $M$ satisfying anti-continuity conditions across $\Gamma$. The precise definition, which we give in \Cref{sec:Plap}, is complicated by the fact that \Cref{def:regSigma} allows $\Gamma$ to have cracks. In the special case that each component $\Omega_i$ of the corresponding partition is a Lipschitz domain,
%\yc{for any $u \in W^{1,2}(M^\circ \backslash \Gamma)$}, 
the restrictions $u_i = u|_{_{\Omega_i}}$ of any function $u$ in the domain of $-\DG$ are required to satisfy\footnote{The second equation is also an anti-continuity condition, since $\nu_i = - \nu_j$ on the common boundary.}
\begin{equation}
\label{DPboundary}
	u_i\big|_{\pO_i \cap \pO_j} = - u_j\big|_{\pO_i \cap \pO_j}, \qquad \frac{\p u_i}{\p \nu_i}\bigg|_{\pO_i \cap \pO_j} = \frac{\p u_j}{\p \nu_j}\bigg|_{\pO_i \cap \pO_j}
\end{equation}
whenever $\Omega_i$ and $\Omega_j$ are neighbors; $\nu_i$ denotes the outward unit normal to $\Omega_i$, and we impose Dirichlet boundary conditions on $\p M$ if it is nonempty.

Let $\{\lambda_n(\Gamma)\}_{n=1}^\infty$ denote the eigenvalues of $-\DG$, listed in increasing order and repeated according to their multiplicity. If $\psi$ is an eigenfunction with eigenvalue $\lambda_\psi$, we define its \textit{spectral position}  
\begin{equation}\label{SPosition}
	\ell(\psi) = \min\{n : \lambda_n(\Gamma) = \lambda_\psi\}.
\end{equation}
Defining the \emph{nodal domains} of $\psi$ to be the connected components of $\{x \in M : \psi(x) \neq 0\}$, it follows from \cite[Thm.~1.7]{BCHS} that $\psi$ has at most $\ell(\psi)$ nodal domains. We say that $\psi$ is \emph{Courant sharp} if it has exactly $\ell(\psi)$ nodal domains. We also define the \emph{nodal partition} $P_\psi$ of $\psi$ to be the partition of $M$ whose components are the nodal domains of $\psi$.

\begin{rem}\label{UEquivalent}
If $P$ is bipartite, then $-\DP$ is unitarily equivalent to the Laplacian and \cite[Thm.~1.7]{BCHS} reduces to Courant's nodal domain theorem \cite[\S VI.6]{CH53}; see \Cref{rem:unitary} for details.
\end{rem}

Our main result is that the nodal partition of a Courant-sharp eigenfunction of $-\DG$ minimizes energy within a certain topological class of partitions, which we now specify. The following definition requires some notation and terminology from algebraic topology, which will be developed in \Cref{sec:hom}.

\begin{define}
\label{def:hom}
Piecewise $C^1$ cuts $\Gamma_1$ and $\Gamma_2$ are \emph{homologous} if they are represented by singular 1-chains $\gamma_1$ and $\gamma_2$ for which $[\gamma_1 - \gamma_2] = 0 \in H_1(M, \p M; \bbZ_2)$.
\end{define}

A more intuitive formulation is the following: If $\Gamma_1 \cup \Gamma_2$ is also a piecewise $C^1$ cut, then $\Gamma_1$ is homologous to $\Gamma_2$ if and only if the closure of the symmetric difference, 
\[
\overline{\Gamma_1 \triangle \Gamma_2} = \overline{(\Gamma_1 \cup \Gamma_2) \setminus (\Gamma_1 \cap \Gamma_2)},
\]
is the boundary set of a bipartite partition; see \Cref{fig:diskhom} for an illustration and \Cref{thm:homdef} for a precise statement. In particular, letting $\Gamma_2 = \varnothing$, we see that $\Gamma_1$ is \emph{null homologous} if and only if it is the boundary set of a bipartite partition.

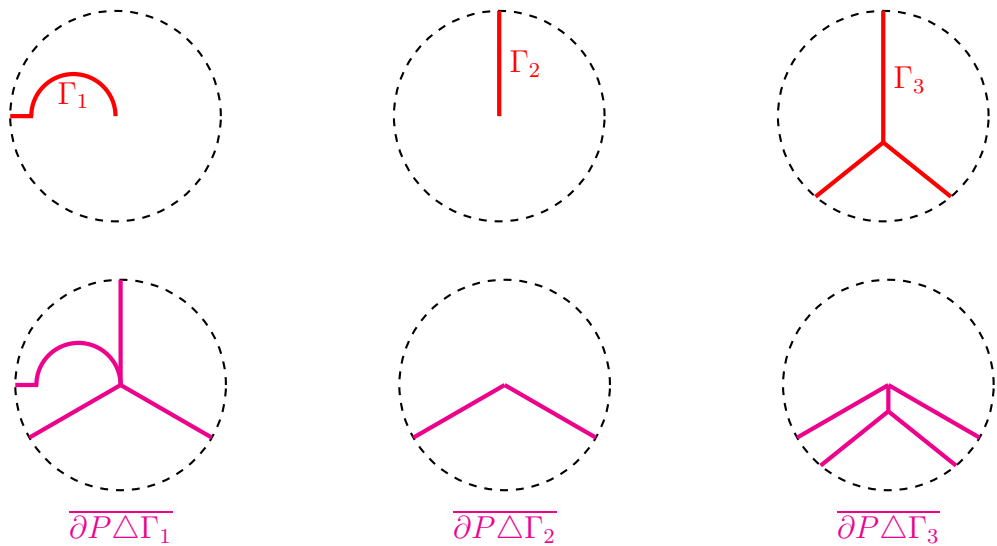
\begin{figure}
\begin{tikzpicture}[ scale=0.7]
	\draw[thick,dashed] (2,0) arc[radius=2, start angle=0, end angle=360];
	\draw[ultra thick, red] (0,0) arc[radius=0.8, start angle=0, end angle=180];
	\draw[ultra thick, red] (-1.565,0) -- (-2,0);
	\node[red] at (-0.8,0.4) {$\Gamma_1$};
\end{tikzpicture}
\hspace{2cm}
\begin{tikzpicture}[ scale=0.7]
	\draw[thick,dashed] (2,0) arc[radius=2, start angle=0, end angle=360];
	\draw[ultra thick, red] (0,0) -- (0,2);
	\node[red] at (0.5,1) {$\Gamma_2$};
\end{tikzpicture}
\hspace{2cm}
\begin{tikzpicture}[ scale=0.7]
	\draw[thick,dashed] (2,0) arc[radius=2, start angle=0, end angle=360];
	\draw[ultra thick, red] (0,-0.5) -- (0,2);
	\draw[ultra thick, red] (0,-0.5) -- ({2*cos(230)},{2*sin(230)});
	\draw[ultra thick, red] (0,-0.5) -- ({2*cos(310)},{2*sin(310)});
	\node[red] at (0.5,0.7) {$\Gamma_3$};
\end{tikzpicture}
\vspace{0.7cm}

\begin{tikzpicture}[ scale=0.7]
	\draw[thick,dashed] (2,0) arc[radius=2, start angle=0, end angle=360];
	\draw[ultra thick, magenta] (0,0) arc[radius=0.8, start angle=0, end angle=180];
	\draw[ultra thick, magenta] (-1.565,0) -- (-2,0);
	\draw[ultra thick, magenta] (0,0) -- (0,2);
	\draw[ultra thick, magenta] (0,0) -- ({2*cos(210)},{2*sin(210)});
	\draw[ultra thick, magenta] (0,0) -- ({2*cos(330)},{2*sin(330)});
	\node[magenta] at (0,-2.7) {$\overline{\p P\triangle\Gamma_1}$};
\end{tikzpicture}
\hspace{2cm}
\begin{tikzpicture}[ scale=0.7]
	\draw[thick,dashed] (2,0) arc[radius=2, start angle=0, end angle=360];
	\draw[ultra thick, magenta] (0,0) -- ({2*cos(210)},{2*sin(210)});
	\draw[ultra thick, magenta] (0,0) -- ({2*cos(330)},{2*sin(330)});
	\node[magenta] at (0,-2.7) {$\overline{\p P \triangle\Gamma_2}$};
\end{tikzpicture}
\hspace{2cm}
\begin{tikzpicture}[ scale=0.7]
	\draw[thick,dashed] (2,0) arc[radius=2, start angle=0, end angle=360];
	\draw[ultra thick, magenta] (0,0) -- ({2*cos(210)},{2*sin(210)});
	\draw[ultra thick, magenta] (0,0) -- ({2*cos(330)},{2*sin(330)});
	\draw[ultra thick, magenta] (0,-0.5) -- (0,0);
	\draw[ultra thick, magenta] (0,-0.5) -- ({2*cos(230)},{2*sin(230)});
	\draw[ultra thick, magenta] (0,-0.5) -- ({2*cos(310)},{2*sin(310)});
	\node[magenta] at (0,-2.7) {$\overline{\p P\triangle\Gamma_3}$};
\end{tikzpicture}
\caption{The boundary set of the Mercedes star partition, \Cref{fig:Mercedes}, is homologous to $\Gamma_1$ and $\Gamma_2$ but not $\Gamma_3$.}
\label{fig:diskhom}
\end{figure}

Let $\PPk$ denote the set of all $k$-partitions of $M$. For any piecewise $C^1$ cut $\Gamma$ and positive integer $k$ we define 
\begin{equation}
\label{Pkdef}
	\cP_k(\Gamma) = \big\{ \tilde P \in \PPk : \p \tilde P \text{ contains a piecewise $C^1$ cut homologous to } \Gamma \big\}.
\end{equation}
If $\tilde P \in \cP_k(\Gamma)$, then $\p\tilde P$ contains a piecewise $C^1$ cut homologous to $\Gamma$, say $\tilde\Gamma$, but no regularity is assumed for the rest of the boundary set. For instance, if $\Gamma$ is null homologous, then we can choose $\tilde\Gamma = \varnothing$ to conclude that $\cP_k(\Gamma)$ contains \emph{all} $k$-partitions.  We also note that if an eigenfunction $\psi$ of $-\Delta^\Gamma$ has $k$ nodal domains and $P_\psi$ is its nodal partition, then $P_\psi \in \cP_k(\Gamma)$ (see \Cref{lem:equality} for further details).

Our main result compares the $k$th eigenvalue, $\lambda_k(\Gamma)$, of $-\DG$ to the energy of any $\tilde P \in \cP_k(\Gamma)$. The following is a special case of the more general \Cref{thm:cts}, which we will formulate and prove in \Cref{sec:cts}.

\begin{theorem}
\label{thm:main}
If $\Gamma$ is a piecewise $C^1$ cut and $k\in \mathbb N$, then
\begin{equation}
	\lambda_k(\Gamma) \leq \inf \big\{\Lambda(\tilde P):\; \tilde P \in \cP_k(\Gamma) \big\},
\end{equation}
with $\lambda_k(\Gamma) = \Lambda(\tilde P)$ if and only if $\tilde P$ is the nodal partition of an eigenfunction of $-\DG$ with eigenvalue $\lambda_k(\Gamma)$.

In particular, if $\psi$ is a Courant-sharp eigenfunction of $-\DG$ with $k$ nodal domains (and therefore $\ell(\psi)=k$), then $\Lambda(P_\psi) \leq \Lambda(\tilde P)$ 
for all $\tilde P \in \cP_k(\Gamma)$.
\end{theorem}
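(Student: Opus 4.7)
The plan is as follows. Given $\tilde P \in \cP_k(\Gamma)$, select a piecewise $C^1$ cut $\tilde\Gamma \subset \p \tilde P$ homologous to $\Gamma$. I would first show that $-\DG$ and $-\Delta^{\tilde\Gamma}$ are unitarily equivalent, so that $\lambda_k(\Gamma) = \lambda_k(\tilde\Gamma)$, and then prove $\lambda_k(\tilde\Gamma) \leq \Lambda(\tilde P)$ by a Courant--Fischer argument using the Dirichlet ground states of the $\tilde\Omega_i$ as test functions. The equality case and the ``in particular'' statement will then follow from unique continuation together with the definition of Courant sharpness.

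For the unitary equivalence, because $\Gamma$ and $\tilde\Gamma$ are homologous there is a (possibly disconnected) region $A \subset M$ whose relative boundary realises the symmetric difference $\Gamma \triangle \tilde\Gamma$. Multiplication by the sign function $\sigma = 1 - 2\chi_A$ intertwines $-\Delta^{\tilde\Gamma}$ with $-\DG$: at each point of $\Gamma \triangle \tilde\Gamma$ the factor $\sigma$ jumps, converting continuity into anti-continuity (and vice versa), while at the remaining interface points $\sigma$ is locally constant and preserves the boundary condition. This extends \Cref{UEquivalent} from the case $\tilde\Gamma = \varnothing$ to arbitrary homologous pairs. Next, let $\phi_i \in H^1_0(\tilde\Omega_i)$ be the normalised positive Dirichlet ground state on $\tilde\Omega_i$, extended by zero to $M$. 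Because $\phi_i$ vanishes on $\p\tilde\Omega_i$, every interface condition of $-\Delta^{\tilde\Gamma}$ is trivially satisfied, so $\phi_i$ lies in its form domain. The $\phi_i$ have disjoint supports and span a $k$-dimensional subspace $V$ on which $\langle -\Delta^{\tilde\Gamma} u, u\rangle = \sum_i |c_i|^2 \lambda_1(\tilde\Omega_i) \leq \Lambda(\tilde P)\|u\|^2$ whenever $u = \sum_i c_i\phi_i$. The min-max principle now yields $\lambda_k(\tilde\Gamma) \leq \Lambda(\tilde P)$.

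For the equality case, suppose $\lambda_k(\Gamma) = \Lambda(\tilde P)$. A dimension count produces a nonzero $u_0 \in V$ that is $L^2$-orthogonal to the first $k-1$ eigenfunctions of $-\Delta^{\tilde\Gamma}$; its Rayleigh quotient is then squeezed between $\lambda_k$ (from below, by its spectral expansion) and $\Lambda(\tilde P) = \lambda_k$ (from above), forcing $u_0$ to lie in the $\lambda_k$-eigenspace. Since $u_0|_{\tilde\Omega_i} = c_i\phi_i$ has definite sign, the nodal domains of $u_0$ are exactly those $\tilde\Omega_i$ with $c_i \neq 0$; with the help of unique continuation along each connected component of $M\setminus\tilde\Gamma$ and, if necessary, a further dimension argument inside $V \cap \ker(-\Delta^{\tilde\Gamma} - \lambda_k)$, one obtains an eigenfunction with $c_i \neq 0$ for every $i$, so $P_{u_0} = \tilde P$. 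Transporting $u_0$ through the unitary $\sigma$ from the first step yields the desired eigenfunction $\psi$ of $-\DG$ with eigenvalue $\lambda_k(\Gamma)$. The converse and the ``in particular'' statement are then immediate: each nodal domain of a Courant-sharp $\psi$ carries a Dirichlet ground state of eigenvalue $\lambda_\psi$, so $\Lambda(P_\psi) = \lambda_\psi = \lambda_k(\Gamma)$. The main obstacle is this last construction: by the Hopf lemma no single $\phi_i$ is an eigenfunction (its one-sided normal derivative at $\p\tilde\Omega_i$ does not match the vanishing outside value), so $u_0$ must arise as a nontrivial linear combination, and ensuring that some such combination realises the full partition $\tilde P$ requires a delicate interplay between unique continuation on the components of $M\setminus\tilde\Gamma$ and the multiplicity of $\lambda_k$.
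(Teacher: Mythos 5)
Your proposal takes the same route as the paper: pick a piecewise $C^1$ cut $\tilde\Gamma \subset \p\tilde P$ homologous to $\Gamma$, use the Dirichlet ground states of the $\tilde\Omega_i$ as test functions for the partition Laplacian via the sign-flip unitary $\sigma = 1 - 2\chi_A$ attached to the bipartite partition with boundary set $\overline{\Gamma\triangle\tilde\Gamma}$, and then invoke the min--max characterization of $\lambda_k$. The identification of the key lemma (homologous cuts give unitarily equivalent partition Laplacians, implemented by multiplication by $\pm 1$) is exactly the paper's \Cref{prop:DG} and \Cref{lem:DG}, and your choice to compute the Rayleigh quotient for $-\Delta^{\tilde\Gamma}$ rather than first transporting the test functions to $-\DG$ is only cosmetic.

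There is, however, a genuine gap in your treatment of the unitary equivalence. You assert that homology of $\Gamma$ and $\tilde\Gamma$ yields a region $A$ whose relative boundary realises $\overline{\Gamma\triangle\tilde\Gamma}$, so that $\sigma = 1 - 2\chi_A$ intertwines the two operators. This is correct precisely when $\Gamma$ and $\tilde\Gamma$ are \emph{compatible}, i.e.\ $\Gamma\cup\tilde\Gamma$ is itself a piecewise $C^1$ cut; only then does $\overline{\Gamma\triangle\tilde\Gamma}$ make sense as the boundary set of a partition (\Cref{thm:homdef}), and only then can the trace conditions defining $W^{1,2}_0(M;\Gamma)$ and $W^{1,2}_0(M;\tilde\Gamma)$ be compared on the common refinement $M^o\setminus(\Gamma\cup\tilde\Gamma)$ as in \Cref{lem:decomp}. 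For arbitrary piecewise $C^1$ cuts this fails: two such cuts can intersect tangentially or even infinitely often (the paper gives the explicit example $\gamma_2(t)=(t,e^{-1/t^2}\sin(1/t))$), in which case $\Gamma\cup\tilde\Gamma$ is not a piecewise $C^1$ cut and the sign-flip construction breaks down. The paper handles this by a deformation/transversality argument (\Cref{prop:deform}), replacing $\Gamma$ by a homologous cut $\hat\Gamma$ transverse to both $\Gamma$ and $\tilde\Gamma$, applying \Cref{lem:DG} twice, and using transitivity of unitary equivalence. Without this step your central intertwining identity is not established.

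On the equality case: you correctly flag that passing from ``some nontrivial combination $u_0=\sum c_i\phi_i$ is an eigenfunction'' to ``$\tilde P$ is a nodal partition'' requires showing all $c_i$ can be taken nonzero; the paper is itself terse here, and your unique-continuation-plus-dimension-count sketch is a reasonable way to fill that in, so this is not where the proposal falls short. The substantive omission is the transversality reduction above.
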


That is, the nodal partition $P$ of a Courant-sharp eigenfunction for $-\DG$ minimizes energy within the class of partitions $\cP_k(\Gamma)$. When $P$ is bipartite the set $\cP_k(\Gamma)$ consists of \emph{all} $k$-partitions, and nodal partitions of $-\DG$ eigenfunctions are nodal partitions of Laplacian eigenfunctions, so we recover one direction of \Cref{thm:bi} as a special case.

If $\lambda_k(\Gamma)$ is a simple eigenvalue of $-\DG$, we have $\Lambda(P) < \Lambda(\tilde P)$ 
for all $\tilde P \in \cP_k(\Gamma) \backslash \{P\}$. In general, the characterization of equality in \Cref{thm:main} is difficult to apply, as it requires knowing all of the eigenvalues and eigenfunctions of $-\DG$. Our next result gives a sufficient condition for inequality that is topological in nature.

\begin{cor}
\label{cor:strict}
Let $P \in \PPk$ be the nodal partition of a Courant-sharp eigenfunction of $-\DG$. If the boundary set of $\tilde P \in \cP_k(\Gamma)$ is a piecewise $C^1$ cut that is not homologous to $\Gamma$, then $\Lambda(P) < \Lambda(\tilde P)$. In particular, if $P \in \PPk$ is the nodal partition of a Courant-sharp Laplacian eigenfunction, then it has strictly lower energy than any non-bipartite $k$-partition.
\end{cor}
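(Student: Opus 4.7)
My plan is to argue by contradiction, using the equality clause of \Cref{thm:main} together with the fact that every nodal partition of a $-\DG$ eigenfunction has boundary set homologous to $\Gamma$. Because $\psi$ is Courant-sharp with $k$ nodal domains, $\lambda_\psi = \lambda_k(\Gamma)$, and $\psi|_{\Omega_i}$ is a sign-definite first Dirichlet eigenfunction on each nodal domain $\Omega_i$, so $\Lambda(P) = \lambda_k(\Gamma)$; hence \Cref{thm:main} already gives $\Lambda(P) \le \Lambda(\tilde P)$. Assume for contradiction that equality holds: by the equality clause of \Cref{thm:main}, $\tilde P$ must be the nodal partition of some eigenfunction $\tilde\psi$ of $-\DG$ with eigenvalue $\lambda_k(\Gamma)$.

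It remains to show any such $\tilde P$ has $\p\tilde P$ homologous to $\Gamma$, contradicting the hypothesis. I plan to apply the intuitive characterization of homology stated after \Cref{def:hom}: it suffices to exhibit $\overline{\p\tilde P \triangle \Gamma}$ as the boundary set of a bipartite partition of $M$. Observe first that $\p\tilde P = \Gamma \cup \overline{\{\tilde\psi = 0\} \setminus \Gamma}$, since $\tilde\psi$ changes sign across $\Gamma$ (by anti-continuity) and across its own nodal set (by continuity), so both types of arcs separate adjacent nodal domains. The symmetric difference is therefore $N := \overline{\{\tilde\psi = 0\} \setminus \Gamma}$, which bounds the partition $P'$ of $M$ whose pieces are the connected components of $M \setminus N$. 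This $P'$ is bipartite: the sign of $\tilde\psi$ lifts to the branched double cover of $M$ over the odd-valence vertices of $\Gamma$ as a single-valued function $\hat\psi$, and coloring each component of $M \setminus N$ by the sign of $\hat\psi$ (using a fixed local lift) yields a proper 2-coloring, because the monodromy around any loop inside a component of $M \setminus N$ vanishes modulo $2$---such a loop crosses $N$ zero times by construction, and the parity of its $\Gamma$-crossings matches the parity of branch points it encloses, rendering the sheet choice consistent. Hence $\p\tilde P$ is homologous to $\Gamma$, the desired contradiction.

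For the ``in particular'' assertion, take $\Gamma = \varnothing$: then $-\DG = -\Delta$, $\cP_k(\Gamma) = \PPk$, and a $k$-partition is bipartite precisely when its boundary set is null homologous, i.e., homologous to $\Gamma = \varnothing$. Applying the general part of the corollary to a non-bipartite $k$-partition---whose boundary may be taken to be a piecewise $C^1$ cut via the regularity results of \cite{HHOT}---yields the strict inequality. The main obstacle lies in the monodromy/bipartite-coloring argument of the previous paragraph: one must verify that, at each branch point of $\Gamma$, the prescribed angular structure of $\tilde\psi$ forced by anti-continuity dovetails with the branching of the cover so that the sign coloring extends consistently across every component of $M \setminus N$, including in the presence of nontrivial loops around branch points.
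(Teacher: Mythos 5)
Your overall strategy matches the paper's: assume $\Lambda(P)=\Lambda(\tilde P)$, invoke the equality case of \Cref{thm:main} to produce an eigenfunction $\tilde\psi$ of $-\DG$ with nodal partition $\tilde P$, and derive a contradiction by showing $\p\tilde P$ is homologous to $\Gamma$. However, your argument for that last step diverges from the paper's and contains a genuine gap.

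The paper's \Cref{lem:equality} first uses \Cref{prop:deform} and \Cref{prop:DG} to replace $\Gamma$ by a homologous cut $\hat\Gamma$ that meets $\p\tilde P$ \emph{transversely}. After this deformation the anti-continuity set $\hat\Gamma$ and the nodal set $\p\tilde P$ share only finitely many points, so the sign of $\tilde\psi$ changes across \emph{every} arc of $\hat\Gamma\cup\p\tilde P=\overline{\hat\Gamma\triangle\p\tilde P}$, and coloring by $\operatorname{sign}\tilde\psi$ directly exhibits a bipartite partition with that boundary set. You skip the deformation, assume $\Gamma\subset\p\tilde P$ (so $\overline{\p\tilde P\triangle\Gamma}=N:=\overline{\{\tilde\psi=0\}\setminus\Gamma}$), and then must cope with the fact that $\tilde\psi$ still flips sign across $\Gamma$, which now runs through the interior of the components of $M\setminus N$. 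You patch this with a branched double cover and a monodromy-consistency claim: that for any loop $\ell\subset M\setminus N$, ``the parity of its $\Gamma$-crossings matches the parity of branch points it encloses.'' This is the gap. The claim is not generally true on a surface with nontrivial $H_1$ (where ``branch points enclosed'' isn't even well-defined), and it is not the right mechanism. What actually makes the coloring consistent is more elementary: along any loop in $M\setminus N$, $\tilde\psi$ is nonvanishing (away from $\Gamma$), its sign flips exactly at each transverse $\Gamma$-crossing, and since $\tilde\psi$ is a single-valued function, the number of such flips must be even. No double cover or branch-point counting is needed; you are substituting a more elaborate and less reliable argument where a direct one suffices. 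Moreover, the premise $\p\tilde P=\Gamma\cup\overline{\{\tilde\psi=0\}\setminus\Gamma}$ itself quietly assumes $\tilde\psi$ does not vanish along arcs of $\Gamma$; the paper's transversality deformation is precisely what rules out such degeneracies.

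There is also a smaller slip in the ``in particular'' part. You invoke the regularity results of \cite{HHOT} to claim that a non-bipartite $k$-partition may be assumed to have a piecewise $C^1$ boundary set, but those results apply only to minimal (or suitably critical) partitions, not arbitrary ones. The correct route is: by \Cref{lem:equality} (with $\Gamma=\varnothing$), any nodal partition of the Laplacian is bipartite, since nodal sets of Laplacian eigenfunctions are automatically piecewise $C^1$. Hence if $\tilde P$ is non-bipartite it cannot be a nodal partition, and the equality criterion in \Cref{thm:main} forces $\Lambda(P)<\Lambda(\tilde P)$ with no regularity hypothesis on $\p\tilde P$ at all.
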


In Section \ref{sec:app} we apply \Cref{thm:main} and \Cref{cor:strict} to partitions of the disk, sphere, torus and cylinder. For now, we mention just one example, to give an idea of the kind of results we obtain.

\begin{theorem}
\label{cor:disk}
The Mercedes star partition is minimal among all 3-partitions whose boundary set contains a curve from the origin to the boundary of the disk. Moreover, it has strictly lower energy than any 3-partition whose boundary set contains such a curve but is not homologous to it.
\end{theorem}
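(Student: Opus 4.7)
The plan is to apply \Cref{thm:main} and \Cref{cor:strict} with $\Gamma$ chosen to be a single radial segment from the origin to a point on the unit circle.

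The first step is to determine the low-lying spectrum of $-\DG$ by separation of variables in polar coordinates $(r,\theta)$, placing $\Gamma$ along the ray $\theta = \pm\pi$ and writing $u(r,\theta) = f(r)g(\theta)$. The anti-continuity condition \eqref{DPboundary} translates to the anti-periodicity $g(\theta+2\pi) = -g(\theta)$, so $g$ is either $\cos(\nu\theta)$ or $\sin(\nu\theta)$ with $\nu \in \{\tfrac12,\tfrac32,\tfrac52,\ldots\}$, while the Dirichlet condition at $r=1$ fixes $f(r) = J_\nu(\sqrt{\lambda}\, r)$ with $\sqrt\lambda$ a positive zero $j_{\nu,n}$ of $J_\nu$. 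Each eigenvalue $j_{\nu,n}^2$ therefore has multiplicity two. Combining $j_{1/2,n} = n\pi$ with the fact that $j_{3/2,1}$ (the first positive root of $\tan x = x$) lies in $(\pi,\,3\pi/2)$, one sees that $\lambda_1(\Gamma) = \lambda_2(\Gamma) = \pi^2$ and $\lambda_3(\Gamma) = \lambda_4(\Gamma) = j_{3/2,1}^2$.

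The eigenfunction $\psi(r,\theta) = J_{3/2}(\sqrt{\lambda_3}\, r)\cos(3\theta/2)$ then has zero set equal to the three rays $\theta \in \{-\pi/3,\,\pi/3,\,\pi\}$ from the origin to $\partial D$, which is a Mercedes star. Hence $\psi$ has $3$ nodal domains, $\ell(\psi) = 3$, and $\psi$ is Courant sharp. By \Cref{thm:main}, $\Lambda(P_\psi) \leq \Lambda(\tilde P)$ for every $\tilde P \in \cP_3(\Gamma)$. To finish, I would observe that any $3$-partition whose boundary contains a curve from the origin to $\partial D$ lies in $\cP_3(\Gamma)$: such a curve is itself a piecewise $C^1$ cut whose singular $1$-chain has relative boundary $\{\mathrm{origin}\}$, the same as $\Gamma$, and because $H_1(D,\partial D;\bbZ_2) = 0$ (e.g.\ by excision, since $D/\partial D \simeq S^2$), the difference is automatically a relative boundary. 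This yields the first claim, and the second claim follows immediately from \Cref{cor:strict}.

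The main technical obstacle is the spectral ordering, specifically the Bessel-zero inequality $j_{3/2,1} < 2\pi = j_{1/2,2}$, which is what places the Mercedes eigenvalue at spectral position $3$ rather than higher. Once this is in hand, the rest of the argument is bookkeeping within the general framework of \Cref{thm:main} and \Cref{cor:strict}.
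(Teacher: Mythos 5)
Your argument follows the same strategy as the paper's proof of \Cref{cor:disk35} (of which \Cref{cor:disk} is the $k=3$ case): take $\Gamma$ to be a single radius, verify that the Mercedes star is a Courant-sharp nodal partition of $-\DG$, and use the triviality of $H_1(D,\partial D;\bbZ_2)$ to place any competing partition in $\cP_3(\Gamma)$. The paper actually leaves the spectral verification implicit (it simply asserts Courant sharpness for $1\le k\le 5$), so the Bessel computation you carry out\,---\,half-integer angular momenta, $j_{1/2,n}=n\pi$, $j_{3/2,1}$ the first root of $\tan x = x$, and the ordering $j_{1/2,1}<j_{3/2,1}<j_{1/2,2}$\,---\,is a worthwhile addition; for full rigor you should also note $j_{3/2,1}<j_{5/2,1}$ (monotonicity of $j_{\nu,1}$ in $\nu$) to exclude the $\nu=5/2$ mode from the first three eigenvalues.

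The one genuine gap is the regularity of the competitor's boundary curve. You assert that any curve in $\p\tilde P$ from the origin to $\partial D$ ``is itself a piecewise $C^1$ cut,'' but nothing forces this: the statement of \Cref{cor:disk} imposes no smoothness on $\tilde P$, and \Cref{thm:main} only applies when $\p\tilde P$ contains a \emph{piecewise $C^1$} cut homologous to $\Gamma$. To cover arbitrary continuous boundary curves you need \Cref{thm:cts} and the class $\cP^0_3(\gamma)$, which is exactly what the paper invokes. With that substitution\,---\,and the interpretation of the curve as a singular 1-chain whose boundary agrees with that of $\Gamma$, so that $[\gamma-\tilde\gamma]=0$ in $H_1(D,\partial D;\bbZ_2)$\,---\,your argument matches the paper's and is correct; as written, it proves a slightly weaker version restricted to piecewise $C^1$ competitors.
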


For instance, the set $\Gamma_3$ shown in \Cref{fig:diskhom} contains a curve from the origin to the boundary of the disk, but $\Gamma_3$ is not homologous to this curve, therefore the corresponding 3-partition has strictly greater energy than the Mercedes star partition. (Equivalently, $\Gamma_3$ is not homologous to the boundary set of the Mercedes star partition but contains a subset that is, namely the set $\Gamma_2$ shown in \Cref{fig:diskhom}.)

\begin{rem}
\label{rem:disk}
Theorem \ref{cor:disk} should be compared to the results for the disk in \cite{BHdisk,HHOdisk}, where it is shown that \emph{if} the boundary set of the \emph{minimal} 3-partition has only one singular point, or contains the origin, then it must be the Mercedes star. While our topological hypothesis is more restrictive, we make no a priori assumptions about the structure of the minimizer. That is, we show that the Mercedes star minimizes energy within a certain topological class of partitions, whether or not that class contains a minimal partition. As a byproduct, we deduce the existence of a minimizer \emph{in this class}.
\end{rem}

The remaining applications have a similar flavor\,---\,starting from Courant-sharp eigenfunctions for a given $-\Delta^\Gamma$, we construct partitions that minimize energy within a certain topological class. This does not require us to make any a priori assumptions on the structure of the minimal $k$-partition, nor does it require us to know that the constrained optimization problem (minimizing energy within a given topological class) has a minimum.

Finally, we remark that while \Cref{thm:main} can likely be reformulated and proved using the ``double covering'' construction\footnote{Here one introduces a suitable branched double cover of $M$ and considers the Laplacian restricted to anti-symmetric functions on the resulting singular manifold.} of \cite{helffer1999nodal},  
the homological approach developed in this paper has several advantages: (i) the tools of algebraic topology can be brought to bear on the problem of minimal partitions; (ii) the topological hypotheses of our theorems are formulated on the original manifold and do not involve lifting to the double cover (which is easy to construct and visualize in concrete examples but rather abstract in general); (iii) the partition Laplacian $-\Delta^\Gamma$ is easier to analyze, visualize and prepare for numerical computations, since it is defined on the smooth manifold $M$, rather than its branched double cover.

\subsection*{Outline}

In \Cref{sec:Plap} we define the operator $-\DG$ and state its relevant properties (to be proved later), which we then use to prove \Cref{thm:main}. In \Cref{sec:hom} we review some topological concepts and use them to clarify \Cref{def:hom}. In \Cref{sec:unitary} we prove the technical results from \Cref{sec:Plap} and hence complete the proof of \Cref{thm:main}. \Cref{cor:strict} is proved in \Cref{Eq}. In \Cref{sec:cts} we generalize all of our results to partitions with continuous, rather than piecewise $C^1$, boundary sets. Finally, in \Cref{sec:app} we apply our results to partitions of the disk, sphere, torus and cylinder.

%%%%%%%%%%%%%%%%%%%%%%%%%%%%%%%%%%
%%%%%%%%%%%%%%%%%%%%%%%%%%%%%%%%%%
%%%%%%%%%%%%%%%%%%%%%%%%%%%%%%%%%%
%%%%%%%%%%%%%%%%%%%%%%%%%%%%%%%%%%
%%%%%%%%%%%%%%%%%%%%%%%%%%%%%%%%%%

\section{The partition Laplacian and proof of Theorem \ref{thm:main}}
\label{sec:Plap}

We define $-\DG$ in \Cref{sec:def} and then characterize its dependence on $\Gamma$ in \Cref{prop:DG}, which will be proved in \Cref{sec:unitary}. In \Cref{sec:proof} we use \Cref{prop:DG} to prove \Cref{thm:main}.

\subsection{The partition Laplacian}
\label{sec:def}
Let $\Gamma$ be a piecewise $C^1$ cut. For convenience we write $M^o = M \backslash \p M$. To define $-\DG$, we need to define \emph{traces} (restrictions to $\Gamma$) for functions in the Sobolev space $W^{1,2}(M^o\backslash\Gamma)$. 
\Cref{def:regSigma} allows a subdomain to lie on both sides of its boundary, as in \Cref{fig:trace}, so a given $\Omega_i$ may not be a Lipschitz domain.

\begin{figure}
\begin{tikzpicture}[ scale=0.9]
	\draw[thick,dashed] (-2,-2) -- (-2,2) -- (2,2) -- (2,-2) -- (-2,-2);
	\draw[very thick] (0,0) -- (0,2);
	\node at (0,-0.75) {$\Gamma$};
\end{tikzpicture}
\hspace{1cm}
\begin{tikzpicture}[ scale=0.9]
	\draw[thick,dashed] (-2,-2) -- (-2,2) -- (2,2) -- (2,-2) -- (-2,-2);
	\draw[very thick, blue] (0,0) -- (0,2);
	\draw[very thick, blue] (0,1.98) arc[radius=1.13, start angle=120, end angle=240];
	\node[blue] at (0,-0.75) {$\tilde\Gamma$};
\end{tikzpicture}

\caption{The slit in $\Gamma$ has been eliminated in the set $\tilde\Gamma$ by adding an extra segment, so each connected component of $M^o \backslash \tilde\Gamma$ is a Lipschitz domain.}
\label{fig:trace}
\end{figure}
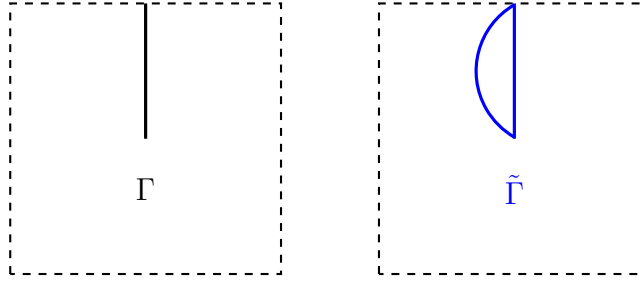

We therefore extend $\Gamma$ to a piecewise $C^1$ cut $\tilde\Gamma$ such that each component of $M^o \backslash \tilde\Gamma$ lies on one side of its boundary, and hence is a Lipschitz domain. See \cite[\S 1.7]{Gr}), and also \cite{BCE}, where similar techniques were used to study Neumann domains.

Label the components of $M^o \backslash \tilde\Gamma$ by $\{\tilde\Omega_i\}$. A regular segment $\Gamma_a \subset\Gamma$ will be contained in $\p\tilde\Omega_i \cap \p\tilde\Omega_j$ for unique $i \neq j$. Since the subdomains $\tilde\Omega_i$ and $\tilde\Omega_j$ are Lipschitz, for any function $u \in W^{1,2}(M^o \backslash \Gamma)$ we can define two traces,
\begin{equation}
\label{twotraces}
	u_i\big|_{\Gamma_a}, \ u_j\big|_{\Gamma_a} \in W^{1/2, 2}(\Gamma_a),
\end{equation}
where $u_i = u|_{_{\tilde\Omega_i}}$. These traces do not depend on the choice of extension $\tilde\Gamma$. Similarly, since $\p M$ is piecewise $C^1$, it can be decomposed into a finite set of regular $C^1$ curves, each contained in the boundary of a unique $\tilde\Omega_i$. For every such $\tilde\Omega_i$, which we call a \emph{boundary subdomain}, the restriction $u_i|_{_{\p M \cap \p \tilde\Omega_i}}$ is thus well defined.

\begin{define}
%A function
$u \in W^{1,2}(M^o \backslash \Gamma)$ is \emph{continuous across $\Gamma_a\subset \Gamma$} if $u_i|_{_{\Gamma_a}} = u_j|_{_{\Gamma_a}}$, and is \emph{anti-continuous across $\Gamma_a$} if $u_i|_{_{\Gamma_a}} = -u_j|_{_{\Gamma_a}}$. It is \emph{(anti)continuous} across $\Gamma$ if it is (anti)continuous across each $\Gamma_a$. It \emph{vanishes on $\p M$} if $u_i|_{_{\p M \cap \p \tilde\Omega_i}} = 0$ for every boundary subdomain $\tilde\Omega_i$.
\end{define}

We now define
\begin{equation}\label{H1def}
	W^{1,2}_0(M; \Gamma) =  \Big\{ u \in W^{1,2}(M^o\backslash\Gamma) : u \text{ vanishes on $\p M$ and is anti-continuous across $\Gamma$} \Big\}.
\end{equation}
This is a closed subspace of $W^{1,2}(M^o \backslash\Gamma)$ and hence is complete. When $\Gamma$ is empty it coincides with $W^{1,2}_0(M^o)$.

Letting $dV$ denote the Riemannian volume form on $M$, $\cdot$ the inner product, and setting $u_i = u|_{_{\Omega_i}}$, we define the bilinear form
\begin{equation}
\label{bGamma}
	\bG(u,v) = \sum_{i=1}^k \int_{\Omega_i} (\nabla u_i \cdot \nabla v_i)\,dV, \qquad  \dom(\bG) = W^{1,2}_0(M;\Gamma),
\end{equation}
where $\{\Omega_i\}$ are the connected components of $M^o\backslash\Gamma$.
The form $\bG$ is nonnegative and hence lower semibounded. It is a closed form, because $W^{1,2}_0(M;\Gamma)$ is a closed subspace of $W^{1,2}(M^o \backslash\Gamma)$ and hence is complete. It thus generates a self-adjoint operator on $L^2(M)$, which we denote by $-\DG$, via the representation theorem for semibounded forms; see, for instance \cite[Thm~10.7]{Schm12}.  Note that $\Gamma = \varnothing$ corresponds to the Dirichlet Laplacian on $M$.

Since each $\Omega_i$ is a Lipschitz domain, we have that $W^{1,2}(\Omega_i)$ is compactly embedded in $L^2(\Omega_i)$, from which it follows that $W^{1,2}_0(M;\Gamma)$ is compactly embedded in $L^2(M)$. As a result,  $-\DG$ has compact resolvent, so its spectrum consists of isolated eigenvalues of finite multiplicity. Writing the eigenfunctions and eigenvalues of $-\DG$ as $\big(\phi_n(\Gamma), \lambda_n(\Gamma)\big)$, we have the variational characterization
\begin{equation}
\label{Ray}
	\lambda_n(\Gamma) = \min \left\{ \frac{\bG(\phi,\phi)}{\int_M \phi^2\,dV} \, : \, \phi \in \{\phi_1(\Gamma), \ldots, \phi_{n-1}(\Gamma)\}^\perp \cap H^1_0(M;\Gamma) \backslash \{0\} \right\}.
\end{equation}
The minimum is achieved if and only if $\phi$ is an eigenfunction for the eigenvalue $\lambda_n(\Gamma)$.

Having defined $-\DG$, we now describe its dependence on the cut $\Gamma$.

\begin{prop}
\label{prop:DG}
If $\Gamma_1$ and $\Gamma_2$ are homologous, then $-\Delta^{\!\Gamma_1}$ and $-\Delta^{\!\Gamma_2}$ are unitarily equivalent. That is, there is a unitary operator $\Phi$ on $L^2(M)$ that maps $W^{1,2}_0(M;\Gamma_1)$ onto $W^{1,2}_0(M; \Gamma_2)$ and satisfies
\begin{equation}
	b_{_{\Gamma_2}}(\Phi u, \Phi v) = b_{_{\Gamma_1}}(u,v)
\end{equation}
for all $u,v \in W^{1,2}_0(M; \Gamma_1)$.  Moreover, the unitary transformation $\Phi$ preserves nodal sets of eigenfunctions, i.e., $\{ u = 0 \} = \{ \Phi u = 0 \}$ if $-\Delta^{\!\Gamma_1} u = \lambda u$ for some $\lambda \in \bbR$.
\end{prop}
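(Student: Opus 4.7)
The plan is to construct the unitary $\Phi$ as multiplication by a locally constant sign function $\sigma: M \setminus (\Gamma_1 \cup \Gamma_2) \to \{\pm 1\}$, defining $\Phi u := \sigma u$. The sign pattern is forced by the homology hypothesis: $\sigma$ must jump across $\overline{\Gamma_1 \triangle \Gamma_2}$ and not jump across any segment lying in $\Gamma_1 \cap \Gamma_2$ or outside of $\Gamma_1 \cup \Gamma_2$.

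First I would reduce to the situation where $\Gamma := \Gamma_1 \cup \Gamma_2$ is itself a piecewise $C^1$ cut (equivalently, where $\Gamma_1$ and $\Gamma_2$ meet transversely). If this fails, one can perturb $\Gamma_2$ within its homology class by an ambient isotopy to achieve transversality, or introduce an intermediate cut $\Gamma_3$ homologous to both with $\Gamma_1\cup\Gamma_3$ and $\Gamma_2\cup\Gamma_3$ each piecewise $C^1$ cuts, and then build $\Phi$ as a composition. Once $\Gamma$ is a cut, I would invoke the intuitive reformulation of \Cref{def:hom} (to be made precise later as \Cref{thm:homdef}): because $\Gamma_1$ and $\Gamma_2$ are homologous, the set $\overline{\Gamma_1 \triangle \Gamma_2}$ is the boundary set of a bipartite partition of $M$. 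The associated two-coloring of the components of $M^o \setminus \Gamma$ is exactly the required $\sigma$.

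Next I would verify that $\Phi u = \sigma u$ sends $W^{1,2}_0(M; \Gamma_1)$ bijectively onto $W^{1,2}_0(M; \Gamma_2)$. Given a regular segment $\Gamma_a$ separating components $\tilde\Omega_i, \tilde\Omega_j$ of $M^o \setminus \Gamma$, there are four cases according to whether $\Gamma_a$ lies in $\Gamma_1 \cap \Gamma_2$, $\Gamma_1 \setminus \Gamma_2$, $\Gamma_2 \setminus \Gamma_1$, or neither. In each case one compares three pieces of data: (i) the trace relation imposed on $u$ by membership in $W^{1,2}_0(M;\Gamma_1)$ (anti-continuity iff $\Gamma_a \subset \Gamma_1$); (ii) the sign ratio $\sigma(\tilde\Omega_i)\sigma(\tilde\Omega_j) = \pm 1$, which is $-1$ iff $\Gamma_a \subset \overline{\Gamma_1 \triangle \Gamma_2}$; and (iii) the trace relation required of $\Phi u$ to lie in $W^{1,2}_0(M;\Gamma_2)$ (anti-continuity iff $\Gamma_a \subset \Gamma_2$). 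A short case check shows these are always consistent, so $\Phi u$ has the correct traces across every regular segment, placing it in $W^{1,2}_0(M;\Gamma_2)$. Bijectivity follows by symmetry in $\Gamma_1, \Gamma_2$, since $\Phi^{-1} = \Phi$. The Dirichlet boundary condition on $\p M$ is preserved because $|\sigma| \equiv 1$.

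The remaining conclusions are then immediate from $\sigma^2 \equiv 1$ and the pointwise identity $\nabla \Phi u = \sigma \nabla u$ valid a.e.\ on $M$: unitarity on $L^2(M)$ from $|\Phi u|^2 = |u|^2$; the intertwining identity $b_{_{\Gamma_2}}(\Phi u, \Phi v) = \int_M \nabla u \cdot \nabla v\, dV = b_{_{\Gamma_1}}(u,v)$; and nodal-set invariance $\{u=0\} = \{\Phi u = 0\}$ because $\sigma$ is nowhere zero. The main obstacle is the first step, namely translating the abstract homological hypothesis of \Cref{def:hom} into a concrete bipartite two-coloring and checking that the construction is independent of the auxiliary perturbation or intermediate cut used when $\Gamma_1 \cup \Gamma_2$ is not itself a piecewise $C^1$ cut; once $\sigma$ is in hand, the analytic verifications are routine.
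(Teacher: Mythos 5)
Your proposal is essentially the paper's proof: multiplication by a $\pm 1$ sign function $\sigma$ determined by the bipartite two-coloring of $M^o\setminus(\Gamma_1\cup\Gamma_2)$ (via \Cref{thm:homdef}), with the non-transverse case handled by interposing an auxiliary cut homologous to both and transverse to both (the paper's \Cref{prop:deform} plus transitivity of unitary equivalence). One caution: your first fallback, ``perturb $\Gamma_2$ within its homology class by an ambient isotopy,'' is circular on its own, since you would then still need to compare $-\Delta^{\Gamma_2}$ with $-\Delta^{\Gamma_2'}$ (and an isotopy is not an isometry, so it does not preserve the Dirichlet form); your second fallback, the intermediate cut $\Gamma_3$, is the one that actually works and is what the paper does.
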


This is the most technical result of the paper. The proof is somewhat delicate because $\Gamma_1$ and $\Gamma_2$ need not be related in a nice way. While it is assumed that the curves in $\Gamma_1$ intersect one another transversely, and likewise for the curves in $\Gamma_2$, it is possible that some curve in $\Gamma_1$ intersects a curve in $\Gamma_2$ tangentially, in which case $\Gamma_1 \cup \Gamma_2$ is not a piecewise $C^1$ cut. This makes it difficult to compare the domains of the operators $-\Delta^{\!\Gamma_1}$ and $-\Delta^{\!\Gamma_2}$. We will resolve this issue in \Cref{sec:unitary} using a transversality argument.

\begin{rem}
\label{rem:unitary}
If $P$ is bipartite, then $-\DP$ is unitarily equivalent to the Laplacian and we get half of \Cref{thm:bi} as a special case of \Cref{thm:main}.
\end{rem}

\subsection{Proof of main theorem}
\label{sec:proof}

We now show how \Cref{prop:DG} implies \Cref{thm:main}.  The proof is similar to that of \cite[Prop.~5.5]{HHOT}, which covers the bipartite case.

Let $\Gamma$ be a piecewise $C^1$ cut and $k\in \mathbb N$.
 Starting with a partition $\tilde P \in \cP_k(\Gamma)$, we use its groundstates to build a test function for the Rayleigh quotient in \eqref{Ray} and hence obtain an upper bound on $\lambda_k(\Gamma)$. The difficulty in the non-bipartite case is producing a test function in $W^{1,2}_0(M; \Gamma)$, since the groundstates coming from $\tilde P$ are continuous, rather than anti-continuous, across $\Gamma$. We resolve this by modifying the groundstates on regions bounded by $\Gamma$ and $\p \tilde P$. This is where the hypothesis $\tilde P \in \cP_k(\Gamma)$ is used.

\begin{proof}[Proof of \Cref{thm:main}]

Let $\tilde P = \{ \tilde \Omega_i\}$ be an arbitrary partition in $\cP_k(\Gamma)$, with $\tilde\psi_i$ denoting the groundstate for the Dirichlet Laplacian on $\tilde\Omega_i$, extended by zero to the rest of $M$ and normalized so that
\begin{equation}
	\int_{\tilde\Omega_i} |\nabla \tilde\psi_i|^2\,dV = \lambda_1(\tilde\Omega_i).
\end{equation}
Since $\tilde P \in \cP_k(\Gamma)$, there exists a subset $\tilde\Gamma \subset \p \tilde P$ that is homologous to $\Gamma$. Since $\tilde\psi_i$ vanishes on $\p \tilde P$, it is anti-continuous across $\tilde\Gamma$, therefore $\tilde\psi_i \in W^{1,2}_0(M; \tilde\Gamma)$. Since $\tilde \Gamma$ is homologous to  $\Gamma$, by \Cref{prop:DG} there is a unitary operator $\Phi$ acting on $L^2(M)$ so that $\Phi(W^{1,2}_0(M; \tilde\Gamma))=W^{1,2}_0(M; \Gamma)$. Then, letting $\psi_i = \Phi \tilde\psi_i \in W^{1,2}_0(M; \Gamma)$, we obtain
\begin{equation}
\label{Hat}
	b_{_{\Gamma}}( \psi_i,  \psi_i) = b_{_{\tilde\Gamma}}(\tilde\psi_i, \tilde\psi_i)
	 = \int_{\tilde\Omega_i} |\nabla \tilde\psi_i|^2\,dV = \lambda_1(\tilde\Omega_i).
\end{equation}

Now choose nonzero real numbers $c_1, \ldots, c_k$ so that the linear combination $\phi = c_1 \psi_1 + \cdots + c_k \psi_k$ is orthogonal to the first $k-1$ eigenfunctions of $-\DG$
%, $\phi_1(\Gamma), \ldots, \phi_{k-1}(\Gamma)$, 
and hence is a valid test function for \eqref{Ray}. 
Since the $\psi_i$ have mutually disjoint supports, we can use \eqref{Ray} and \eqref{Hat} to compute
\begin{equation}
\label{minmaxproof}
	\lambda_k(P) \leq \frac{b_{_{\Gamma}}(\phi,\phi)}{\int_M \phi^2\,dV} 
	= \frac{c_1^2 \lambda_1(\tilde\Omega_1) + \cdots + c_k^2 \lambda_1(\tilde\Omega_k)}{c_1^2 + \cdots + c_k^2} 
	\leq \max_{1 \leq i \leq k} \lambda_1(\tilde\Omega_i)
	= \Lambda(\tilde P),
\end{equation}
as claimed.

If $\lambda_k(\Gamma) = \Lambda(\tilde P)$, then the inequalities in \eqref{minmaxproof} are all equalities. From the case of equality in \eqref{Ray} we see that $\phi = c_1 \psi_1 + \cdots + c_k \psi_k$ must be an eigenfunction of $-\DG$ for the eigenvalue $\lambda_k(\Gamma)$, so $\tilde P$ is the nodal partition of an eigenfunction of $-\DG$, for the eigenvalue $\lambda_k(\Gamma)$. Conversely, if $\tilde P$ is the nodal partition of an eigenfunction for the eigenvalue $\lambda_k(\Gamma)$, each subdomain has groundstate energy $\lambda_1(\tilde\Omega_i) = \lambda_k(\Gamma)$ and hence $\tilde P$ has energy $\Lambda(\tilde P) = \lambda_k(\Gamma)$. The same argument shows that if $P$ is a $k$-partition generated by a Courant-sharp eigenfunction of $-\DG$, then $\Lambda(P) = \lambda_k(\Gamma)$ and so $\Lambda(P) \leq \Lambda(\tilde P)$ for all $\tilde P \in \cP_k(\Gamma)$.
\end{proof}

%%%%%%%%%%%%%%%%%%%%%%%%%%%%%%%%%%
%%%%%%%%%%%%%%%%%%%%%%%%%%%%%%%%%%
%%%%%%%%%%%%%%%%%%%%%%%%%%%%%%%%%%
%%%%%%%%%%%%%%%%%%%%%%%%%%%%%%%%%%
%%%%%%%%%%%%%%%%%%%%%%%%%%%%%%%%%%

\section{Homology of cuts}
\label{sec:hom}

We now elaborate on \Cref{def:hom}. After reviewing some concepts from algebraic topology, we explain how piecewise $C^1$ cuts, $\Gamma_1$ and $\Gamma_2$, can be represented by singular 1-chains, $\gamma_1$ and $\gamma_2$ respectively. This representation is not unique, but we will see that the condition $[\gamma_1 - \gamma_2] = 0 \in H_1(M, \p M; \bbZ_2)$ is independent of this choice, and so \Cref{def:hom} is meaningful. Note that we have $\gamma_1 - \gamma_2 = \gamma_1 + \gamma_2$, as our coefficients lie in $\bbZ_2$. See \cite[Ch.~2]{Hat} for an introduction to the definitions and tools used here.

In \Cref{sec:null} we give an equivalent formulation of \Cref{def:hom} for a sufficiently regular pair of cuts, and in \Cref{sec:odd} we discuss the relationship between homology and the set of odd points for a cut.

\subsection{Definitions}
\label{sec:homdef}
Letting $\Delta^n \subset \mathbb{R}^n$ denote the standard $n$-simplex, we recall that a singular $n$-simplex is a continuous map $\Delta^n \to M$. In particular, a singular 0-simplex corresponds to a point in $M$ and a singular 1-simplex is a parameterized curve. A singular $n$-chain is a (finite) formal sum of singular $n$-simplices with coefficients in $\bbZ_2$, so the set of these, denoted $C_n(M; \bbZ_2)$, is the free $\bbZ_2$-module generated by the singular $n$-simplices.

For each $n$ there exists a boundary map $\p_n \colon C_n(M; \bbZ_2) \to C_{n-1}(M; \bbZ_2)$ with the property that $\p_n \p_{n+1} = 0$. For instance, viewing a continuous curve $\gamma \colon [0,1] \to M$ as an element of $C_1(M; \bbZ_2)$, we have $\p_1\gamma = \gamma(1) - \gamma(0) \in C_0(M; \bbZ_2)$.

Defining the quotient groups $C_n(M,\p M; \bbZ_2) = C_n(M; \bbZ_2)/C_n(\p M; \bbZ_2)$, we see that each $\p_n$ descends to a boundary map $\p_n \colon C_n(M, \p M; \bbZ_2) \to C_{n-1}(M, \p M; \bbZ_2)$ again satisfying $\p_n \p_{n+1} = 0$. The relative homology groups are given by $H_n(M,\p M; \bbZ_2) = \ker(\p_n) / \operatorname{im} (\p_{n+1})$ for each $n$. In particular, $H_1(M,\p M; \bbZ_2)$ consists of equivalence classes of relative 1-cycles, i.e., 1-chains $\gamma \in C_1(M; \bbZ_2)$ for which $\p_1\gamma \in C_0(\p M;\bbZ_2)$.

The homology class of a relative 1-cycle $\gamma$ is denoted by $[\gamma] \in H_1(M,\p M; \bbZ_2)$. Note that $[\gamma] = 0$ if and only if $\gamma = \p_2 \omega + \beta$ for some $\omega \in C_2(M; \bbZ_2)$ and $\beta \in C_1(\p M; \bbZ_2)$. We say that two 1-chains $\gamma_1$ and $\gamma_2$ are \emph{homologous} if $[\gamma_1 - \gamma_2] = 0$. For this to hold it is necessary that $\gamma_1 - \gamma_2$ to be a relative 1-cycle, that is, $\p_1 \gamma_1 - \p_1 \gamma_2 \in C_0(\p M;\bbZ_2)$.  This condition is not sufficient: see the example in \Cref{fig:annulus}, where a relative 1-cycle $\Gamma_2$ is not null-homologous.

\subsection{The homology class of a piecewise $C^1$ cut}
Given a partition, we are interested in the homology class of its boundary set. If $\Gamma$ is a piecewise $C^1$ cut, in the sense of \Cref{def:regSigma}, then it can be viewed as a singular 1-chain. More precisely, given a finite collection $\{\gamma_a\}$ of regular $C^1$ curves that can only intersect at their end points, whose image is $\Gamma$, we define $\gamma = \sum_a \gamma_a \in C_1(M; \bbZ_2)$. This 1-chain is said to \emph{represent $\Gamma$}.

A cut $\Gamma$ can be represented by many different chains. This does not matter for the following reason.

\begin{lemma}
\label{lem:param}
If $\gamma, \tilde\gamma \colon [0,1] \to M$ are continuous, then their concatenation $\gamma \ast \tilde\gamma$ is homologous to $\gamma + \tilde\gamma$. If $\tau \colon [0,1] \to [0,1]$ is a homeomorphism, then $\gamma \circ \tau$ is homologous to $\gamma$.
\end{lemma}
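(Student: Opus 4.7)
The plan is to prove each part by constructing an explicit singular 2-simplex whose $\bbZ_2$-boundary realizes the claimed identity, modulo 1-simplices that are themselves boundaries. The only auxiliary fact I need is that a constant singular 1-simplex at a point $p \in M$ is a boundary: it equals $\p_2$ of the constant 2-simplex at $p$, since all three face maps coincide and so the three copies sum to a single copy modulo $2$.

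For the concatenation claim, I would take $\Delta^2$ with barycentric coordinates $(a,b,c)$ and vertices $v_0, v_1, v_2$, and define the affine map $\pi(a,b,c) = b/2 + c$, which sends $v_0, v_1, v_2$ to $0, 1/2, 1$ respectively. Setting $\sigma = (\gamma\ast\tilde\gamma)\circ \pi$, a direct check using the definition of concatenation gives $\sigma|_{[v_0,v_1]} = \gamma$, $\sigma|_{[v_1,v_2]} = \tilde\gamma$, $\sigma|_{[v_0,v_2]} = \gamma\ast\tilde\gamma$, so $\p \sigma = \gamma + \tilde\gamma + \gamma\ast\tilde\gamma$ in $\bbZ_2$, giving the desired homology.

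For the reparameterization claim I would first treat the case $\tau(0)=0$, $\tau(1)=1$ by constructing a 2-simplex $\sigma: \Delta^2 \to M$, with $\Delta^2 = \{(x,y): x,y\ge 0,\ x+y\le 1\}$, given by
\[
\sigma(x,y) \;=\; \gamma\!\left((1-t)r + t\,\tau(r)\right), \qquad r = x+y,\ t = y/(x+y),
\]
extended by $\sigma(0,0) = \gamma(0)$. The hypothesis $\tau(0) = 0$ guarantees continuity at the origin since the argument of $\gamma$ is bounded by $r + \tau(r) \to 0$. A direct check shows that the three edges of $\sigma$ are $\gamma$, $\gamma\circ\tau$, and the constant path at $\gamma(1)$; combined with the auxiliary fact above, this shows $\gamma + \gamma\circ\tau$ is a boundary. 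For the orientation-reversing case I would first prove $\gamma$ is homologous to $\bar\gamma(t) := \gamma(1-t)$ via the ``tent'' simplex $(a,b,c) \mapsto \gamma(b)$, whose edges work out to be $\gamma$, $\bar\gamma$, and the constant at $\gamma(0)$; then factor a decreasing $\tau$ as $\tau = \tau' \circ \iota$ where $\iota(t) = 1-t$ and $\tau'$ is an increasing homeomorphism, and combine these reductions.

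The main obstacle is the continuity of $\sigma$ at the collapsed vertex $(0,0)$ in the orientation-preserving construction, together with verifying that the edge restrictions of each 2-simplex match, on the nose, the singular 1-simplices appearing in the statement. Both reduce to routine bookkeeping using the definition of concatenation and the affine coordinates on $\Delta^2$.
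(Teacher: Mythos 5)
The paper gives no proof of this lemma; it only remarks that the result is standard and that the $\bbZ_2$ coefficients are what make the orientation-reversing case work. Your explicit constructions supply exactly the standard argument the paper alludes to, and they check out: the affine map $\pi(a,b,c)=b/2+c$ does realize $\gamma+\tilde\gamma-(\gamma\ast\tilde\gamma)$ as $\p\sigma$; the ``ramp'' simplex $\sigma(x,y)=\gamma\bigl((1-t)r+t\tau(r)\bigr)$ is continuous at the collapsed vertex precisely because $\tau(0)=0$, and its faces are $\gamma$, $\gamma\circ\tau$, and the constant at $\gamma(1)$; and the auxiliary fact that a constant $1$-simplex is a boundary (indeed already over $\bbZ$, since $1-1+1=1$) is correct. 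Your factorization of a decreasing $\tau$ through $\iota(t)=1-t$ and the ``tent'' simplex is also exactly where the $\bbZ_2$ hypothesis enters\,---\,over $\bbZ$ the tent yields $\gamma\sim-\bar\gamma$, not $\gamma\sim\bar\gamma$\,---\,so your proof not only is correct but isolates the precise point the paper's remark highlights.
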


We omit the proof, which is standard, but note that the second statement relies on the fact that we are using $\bbZ_2$ coefficients, and thus have $\gamma = -\gamma$ as 1-chains, since $\tau$ was not assumed to preserve orientation.

It follows from \Cref{lem:param} that all representatives of the same cut are homologous. Therefore, if two cuts $\Gamma_1$ and $\Gamma_2$ can be represented by homologous 1-chains, any other representatives of $\Gamma_1$ and $\Gamma_2$ will also be homologous, and so \Cref{def:hom} is meaningful.

\subsection{Homology and bipartiteness}
\label{sec:null}

In this section we reformulate \Cref{def:hom} for pairs of cuts satisfying a compatibility condition. While more restrictive than the original definition, this formulation is more geometric in nature and suffices for many applications; see \Cref{fig:diskhom} for examples.

\begin{define}
Piecewise $C^1$ cuts $\Gamma_1$ and $\Gamma_2$ are \emph{compatible} if $\Gamma_1 \cup \Gamma_2$ is a piecewise $C^1$ cut.
\end{define}

To understand why such a condition is needed, consider $\gamma_1, \gamma_2 \colon [-1,1] \to [-1,1]^2$ given by
\begin{equation}
	\gamma_1(t) = (t, 0), \qquad \gamma_2(t) = \begin{cases} \big(t,e^{-1/t^2} \sin(1/t) \big), &t \neq 0, \\ (0,0), & t=0. \end{cases}
\end{equation}
These are $C^\infty$ regular curves, thus each of their images is a piecewise $C^1$ cut, but the same is not true of their union, due to the infinitely many intersections of $\gamma_1$ and $\gamma_2$.

We recall that a partition $P = \{\Omega_i\}$ is \emph{bipartite} if each of the $\Omega_i$ can be assigned one of two colors so that neighboring domains have different colors, where  $\Omega_i$ and $\Omega_j$ are \emph{neighbors} if $\mbox{int} (\overline{\Omega_i \cup \Omega_j}) \neq \Omega_i \cup \Omega_j$.  Note that a domain with a slit always neighbors itself, therefore any partition that contains such a domain cannot be bipartite. For instance, the partition with boundary set $\Gamma_2$ in Figure \ref{fig:diskhom} is not bipartite, even though it only has one component.

\begin{theorem}
\label{thm:homdef}
Compatible cuts $\Gamma_1$ and $\Gamma_2$ are homologous if and only if $\overline{\Gamma_1 \triangle \Gamma_2}$ is the boundary set of a bipartite partition.
\end{theorem}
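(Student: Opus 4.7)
The plan is to first reduce the statement to the following single-cut assertion: a piecewise $C^1$ cut $\Gamma$ is null homologous if and only if it is the boundary set of a bipartite partition. For the reduction, I would use the compatibility of $\Gamma_1$ and $\Gamma_2$ to subdivide the union $\Gamma_1 \cup \Gamma_2$ at its intersection points into maximal regular arcs, and then pick representatives of the form $\gamma_i = \alpha + \beta_i$, where $\alpha$ collects the arcs lying in $\Gamma_1 \cap \Gamma_2$ and $\beta_i$ collects the arcs lying only in $\Gamma_i$. Since we work with $\bbZ_2$ coefficients, $2\alpha = 0$, so $\gamma_1 + \gamma_2 = \beta_1 + \beta_2$ is a representative of $\overline{\Gamma_1 \triangle \Gamma_2}$. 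By \Cref{lem:param} the homology class depends only on the underlying cut, so $\Gamma_1$ and $\Gamma_2$ are homologous in the sense of \Cref{def:hom} if and only if $\overline{\Gamma_1 \triangle \Gamma_2}$ is null homologous.

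For the single-cut assertion, I would fix a smooth triangulation of $M$ in which both $\Gamma$ and $\p M$ are subcomplexes of the 1-skeleton. For the direction ``bipartite boundary $\Rightarrow$ null homologous,'' given a 2-coloring of the components $\{\Omega_i\}$ of $M^o \setminus \Gamma$ into classes $A$ and $B$, I would set $\omega = \sum_{\Omega_i \in A} [\Omega_i] \in C_2(M; \bbZ_2)$, where $[\Omega_i]$ denotes the sum of the 2-simplices contained in $\Omega_i$. For any interior edge of the triangulation not in $\Gamma$, the two incident 2-simplices lie in the same $\Omega_i$ and cancel; for an interior edge in $\Gamma$, bipartiteness places them in opposite color classes, so exactly one of them contributes to $\omega$. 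Collecting the contributions from edges in $\p M$ as a chain $\beta \in C_1(\p M;\bbZ_2)$ yields $\p_2 \omega = \gamma + \beta$, hence $[\gamma] = 0$ in $H_1(M,\p M;\bbZ_2)$.

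For the converse, starting from $\omega \in C_2(M;\bbZ_2)$ with $\p_2 \omega = \gamma + \beta$ and $\beta \in C_1(\p M;\bbZ_2)$, I would further refine the triangulation so that the support of $\omega$ is a subcomplex and write $\omega = \sum_j n_j [\sigma_j]$ with $n_j \in \bbZ_2$. The identity $\p_2 \omega = \gamma + \beta$ forces the coefficients of two 2-simplices sharing an interior edge to agree when the edge is not in $\Gamma$ and to differ by $1$ when the edge is in $\Gamma$. Therefore the map $f \colon M^o \setminus \Gamma \to \bbZ_2$ defined by $f(x) = n_j$ when $x$ lies in the interior of $\sigma_j$ is locally constant on $M^o \setminus \Gamma$, hence constant on each component $\Omega_i$. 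Declaring $\Omega_i$ to be of color $A$ when $f \equiv 0$ there and of color $B$ when $f \equiv 1$ yields a 2-coloring, and neighboring components, being separated by a regular arc of $\Gamma$, must receive opposite colors. A slit is automatically excluded, since it would force a single component to carry both values of $f$.

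The main obstacle is the converse direction, in particular arranging a triangulation simultaneously compatible with $\Gamma$, $\p M$, and the support of $\omega$, and verifying that the proposed function $f$ is genuinely well defined on each component $\Omega_i$. The former is handled by the existence of smooth triangulations for smooth manifolds together with successive barycentric subdivisions, while the latter reduces to reading off the identity $\p_2 \omega = \gamma + \beta$ edge by edge, taking care that the boundary correction $\beta$ only affects edges in $\p M$ and therefore cannot introduce spurious jumps of $f$ in the interior.
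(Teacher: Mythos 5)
Your strategy mirrors the paper's almost exactly: reduce, via the symmetric difference, to the single-cut statement that a piecewise $C^1$ cut $\Gamma$ is null homologous if and only if it bounds a bipartite partition, then prove that statement by a coloring argument over a triangulation adapted to $\Gamma$ and $\p M$. Your forward 2-chain $\omega = \sum_{\Omega_i \in A}[\Omega_i]$ and your locally constant color function $f$ read off the coefficients of $\omega$ in the converse correspond directly to the paper's auxiliary condition and the chain of implications it establishes.

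Two steps are glossing over real work. First, your reduction assumes that after subdividing $\Gamma_1 \cup \Gamma_2$ at intersection points, each resulting arc lies entirely in $\Gamma_1 \cap \Gamma_2$ or in one of $\overline{\Gamma_1 \backslash \Gamma_2}$, $\overline{\Gamma_2 \backslash \Gamma_1}$. This is not automatic from compatibility: a priori a regular arc of $\Gamma_1$ and a regular arc of $\Gamma_2$ could coincide on an awkward closed subset, so ``subdivide at intersection points'' does not obviously terminate in arcs of the required type. One must prove, using regularity of the curves and the finiteness of the collection, that the set $\{t : \gamma(t) \in \Gamma_1\}$ is both open and closed in the interior of each arc $\gamma$ of $\Gamma_1 \cup \Gamma_2$; the paper isolates this as a separate lemma. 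Second, in the converse of the single-cut statement, ``refine the triangulation so that the support of $\omega$ is a subcomplex'' is not the right move: $\omega$ is a singular 2-chain, a formal $\bbZ_2$-sum of continuous maps $\Delta^2 \to M$, and subdividing the triangulation does not turn those maps into simplices of it. The clean fix is to invoke the isomorphism $H^\Delta_1(M,\p M;\bbZ_2) \cong H_1(M,\p M;\bbZ_2)$ between simplicial and singular relative homology, so that vanishing of $[\gamma]$ in singular homology forces vanishing in simplicial homology and the bounding 2-chain may be taken simplicial from the start. With those two points repaired, your argument goes through and coincides with the paper's.
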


To prove our main results it suffices to know this theorem when $\Gamma_1$ and $\Gamma_2$ intersect transversely; this special case will be used in the proof of \Cref{lem:DG} below. We have chosen to formulate and prove the result in the more general context of compatible cuts as this gives further geometric insight into \Cref{def:hom}, and in particular applies to all of the examples shown in \Cref{fig:diskhom}.

To prove \Cref{thm:homdef}, we start with a technical result about compatible cuts.

\begin{lemma}
\label{lem:compat}
If $\Gamma_1$ and $\Gamma_2$ are compatible, then $\Gamma_1 \cup \Gamma_2$ can be parameterized by regular $C^1$ curves $\{\gamma_a\}_{a \in A}$ such that the image of each $\gamma_a$ is contained in $\overline{\Gamma_1\backslash\Gamma_2}$, $\overline{\Gamma_2\backslash\Gamma_1}$ or $\Gamma_1 \cap \Gamma_2$.  
\end{lemma}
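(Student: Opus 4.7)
The plan is to start from a parameterization of $\Gamma_1 \cup \Gamma_2$ as a piecewise $C^1$ cut (which exists by compatibility) and then refine each parameterizing arc by cutting it at the (few) points where it transitions between $\overline{\Gamma_1 \setminus \Gamma_2}$, $\overline{\Gamma_2 \setminus \Gamma_1}$, and $\Gamma_1 \cap \Gamma_2$. First, I use compatibility to write $\Gamma_1 \cup \Gamma_2$ as the image of regular $C^1$ curves $\{\eta_b\}$ meeting one another only transversely and only at endpoints. Every interior point of an $\eta_b$ is then a \emph{smooth point} of $\Gamma_1 \cup \Gamma_2$, meaning this set coincides near such a point with the embedded $C^1$ $1$-manifold parameterized by $\eta_b$.

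Next, for a fixed $\eta_b$, set $T_i := \eta_b^{-1}(\Gamma_i) \subseteq [0,1]$; these are closed and satisfy $T_1 \cup T_2 = [0,1]$. The key step is to show that each $T_i$ has only finitely many boundary points. Fix an interior $t^* \in (0,1)$ and let $p := \eta_b(t^*)$. Because $\Gamma_1$ is itself a piecewise $C^1$ cut, near $p$ it is a finite union of regular $C^1$ arcs, each contained in $\Gamma_1 \cup \Gamma_2$ and hence in the local $1$-manifold at $p$. This forces every arc of $\Gamma_1$ meeting $p$ to be tangent to $\eta_b$ at $p$, and then the transversality clause in \Cref{def:regSigma}, applied internally to $\Gamma_1$, forbids more than one such arc at $p$: two arcs with parallel tangents cannot meet transversely. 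A brief case analysis (no arc at $p$; one arc with $p$ in its interior; one arc with $p$ as an endpoint) shows $\Gamma_1$ looks locally along $\eta_b$ like, respectively, nothing, a full open neighborhood of $t^*$, or a half-open ray from $t^*$. Only the third case makes $t^*$ a boundary point of $T_1$, and such $t^*$ are in bijection with a subset of the finite vertex set of $\Gamma_1$; the same argument handles $T_2$.

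Once finiteness of the boundary of each $T_i$ is established, the conclusion is routine bookkeeping. Let $S \subseteq [0,1]$ be the union of $\{0,1\}$ with the boundary points of $T_1$ and $T_2$; this is finite. Subdivide $\eta_b$ at $S$ into finitely many regular $C^1$ subcurves (regularity is preserved since we restrict a regular curve to subintervals). On each open subinterval, membership in $T_1$ and in $T_2$ is constant; combined with $T_1 \cup T_2 = [0,1]$, this leaves exactly three possibilities, and taking closures places the image of each subcurve into $\Gamma_1 \cap \Gamma_2$, $\overline{\Gamma_1 \setminus \Gamma_2}$, or $\overline{\Gamma_2 \setminus \Gamma_1}$. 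Performing this subdivision for every $\eta_b$ produces the desired family $\{\gamma_a\}_{a \in A}$.

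The main obstacle is the transition-count argument in the middle paragraph. Without compatibility this can genuinely fail: two regular $C^1$ curves can meet transversely at infinitely many points accumulating at a tangential intersection. So the argument has to exploit that $\Gamma_1 \cup \Gamma_2$ itself inherits the transversality structure of a piecewise $C^1$ cut and is therefore locally a $C^1$ $1$-manifold away from its finitely many vertices; it is this rigidity that rules out pathological oscillation of the curves $\eta_b$ between $\Gamma_1$ and $\Gamma_2$.
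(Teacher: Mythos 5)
Your proof is correct and follows the same overall strategy as the paper: parameterize $\Gamma_1 \cup \Gamma_2$ using compatibility, then exploit the endpoint-only intersection structure to show that $\Gamma_i$-membership along each parameterizing arc can only change at vertices of $\Gamma_i$. The execution differs in organization and technique. The paper subdivides first, at the finitely many endpoints of the curves constituting $\Gamma_1$ and $\Gamma_2$, and then proves that $\gamma^{-1}(\Gamma_i) \cap (0,1)$ is open and closed in $(0,1)$ for each resulting arc $\gamma$ (closedness by continuity; openness via a compactness argument and the implicit function theorem), so each arc interior is monochromatic. You instead argue directly that the boundary points of $T_i = \eta_b^{-1}(\Gamma_i)$ lie over vertices of $\Gamma_i$, via the local observation that any arc of $\Gamma_i$ through an interior point $p$ of $\eta_b$ must lie inside the local $C^1$ $1$-manifold at $p$, forcing tangency to $\eta_b$, so the internal transversality of $\Gamma_i$ permits at most one such arc; you then subdivide at those finitely many points. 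Both proofs rest on the same geometric fact\,---\,a regular arc of $\Gamma_i$ through an interior point of $\eta_b$ covers a full or half neighborhood of $p$ along $\eta_b$, so membership flips only at vertices\,---\,but your tangency-and-transversality case analysis unpacks explicitly what the paper compresses into the implicit-function-theorem step, trading some terseness for geometric clarity.
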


\begin{proof}
By assumption, $\Gamma_1 \cup \Gamma_2$ is the union of the images of a finite set of curves that only intersect at their endpoints. Subdividing if necessary, we can assume that none of these curves contain an endpoint of either $\Gamma_1$ or $\Gamma_2$ in its interior. Let $\gamma \colon [0,1] \to M$ denote one such curve.  If the interior $\gamma\big((0,1)\big)$ is contained in $\Gamma_1\backslash\Gamma_2$ or $\Gamma_2\backslash\Gamma_1$, then there is nothing to prove. Therefore, we just need to show that if $\gamma\big((0,1)\big)$ intersects $\Gamma_1$ and $\Gamma_2$, then it is contained in $\Gamma_1 \cap \Gamma_2$.

We claim that if $\gamma\big((0,1)\big)$ intersects $\Gamma_1$, then $\gamma\big((0,1)\big) \subset \Gamma_1$ (and likewise for $\Gamma_2$). Equivalently, the set $I_1 = \{t \in (0,1) : \gamma(t) \in \Gamma_1\}$ is equal to $(0,1)$. To see this, it suffices to show that this set is relatively open and closed in $(0,1)$; the latter property follows immediately from continuity of $\gamma_1$. 

To show that $I_1$ is relatively open, let $t_* \in I_1$. This means there is a curve $\gamma_1 \colon [0,1] \to M$ in $\Gamma_1$ and $t_1 \in [0,1]$ for which $\gamma_1(t_1) = \gamma(t_*)$. Since the interior of $\gamma$ does not contain an endpoint of any curve in $\Gamma_1$, we must have $t_1 \in (0,1)$. Moreover, we claim that there exists $\delta>0$ such that
\begin{equation}\label{delta}
\gamma_1\big([t_1 - \delta, t_1 + \delta]\big) \subset \gamma\big((0,1)\big).
\end{equation}
Suppose the claim does not hold. Then, for all $\delta$, the restriction of $\gamma_1$ to $[t_1 - \delta, t_1 + \delta]$ will intersect some curve in $\Gamma_1 \cup \Gamma_2$ other than $\gamma$, say $\hat\gamma$. Since for $\delta$ small enough $\gamma_1\big([t_1 - \delta, t_1 + \delta]\big)$ does not intersect the endpoints of $\gamma$, then $\hat\gamma$ must intersect the interior of $\gamma$, yielding a contradiction.

Since $\gamma_1$ is a regular curve and \eqref{delta} holds, it follows from the implicit function theorem that $\gamma_1\big((t_1 - \delta, t_1 + \delta)\big) \subset \gamma\big((0,1)\big)$ is an open neighborhood of $\gamma(t_*)$, and hence contains $\gamma(t)$ for all $t$ in some neighborhood of $t_*$. This proves that $I_1$ is open, as claimed.
\end{proof}

We next give a homological interpretation of the symmetric difference.

\begin{prop}
\label{symdiff}
Compatible cuts $\Gamma_1$ and $\Gamma_2$ are homologous if and only if $\overline{\Gamma_1 \triangle \Gamma_2}$ is null homologous.
\end{prop}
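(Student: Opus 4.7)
The plan is to express representatives of $\Gamma_1$, $\Gamma_2$ and $\overline{\Gamma_1\triangle\Gamma_2}$ in a common set of parameterizing curves so that the relation $\gamma_1+\gamma_2 = \gamma_\triangle$ holds on the nose in $C_1(M;\bbZ_2)$, after which the equivalence is immediate from the fact that we work with $\bbZ_2$ coefficients.

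First I would apply \Cref{lem:compat} to parameterize $\Gamma_1\cup\Gamma_2$ by regular $C^1$ curves $\{\gamma_a\}_{a\in A}$ that split into three disjoint index sets
\[
A = A_1 \sqcup A_2 \sqcup A_{12},
\]
where the image of $\gamma_a$ lies in $\overline{\Gamma_1\setminus\Gamma_2}$ for $a\in A_1$, in $\overline{\Gamma_2\setminus\Gamma_1}$ for $a\in A_2$, and in $\Gamma_1\cap\Gamma_2$ for $a\in A_{12}$. Set
\[
\alpha = \sum_{a\in A_1}\gamma_a, \qquad \beta = \sum_{a\in A_2}\gamma_a, \qquad \sigma = \sum_{a\in A_{12}}\gamma_a.
\]
Because each $\gamma_a$ with $a\in A_1\cup A_{12}$ has image in $\Gamma_1$, and every point of $\Gamma_1$ is covered (points of $\Gamma_1\setminus\Gamma_2$ by an $a\in A_1$, points of $\Gamma_1\cap\Gamma_2$ by an $a\in A_{12}$), the chain $\gamma_1 := \alpha+\sigma$ represents $\Gamma_1$. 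Symmetrically, $\gamma_2 := \beta+\sigma$ represents $\Gamma_2$, and $\gamma_\triangle := \alpha+\beta$ represents $\overline{\Gamma_1\triangle\Gamma_2}$.

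Next I would use the $\bbZ_2$ coefficient structure to compute
\[
\gamma_1-\gamma_2 \;=\; \gamma_1+\gamma_2 \;=\; (\alpha+\sigma)+(\beta+\sigma) \;=\; \alpha+\beta+2\sigma \;=\; \alpha+\beta \;=\; \gamma_\triangle
\]
in $C_1(M;\bbZ_2)$, so passing to relative homology yields $[\gamma_1-\gamma_2] = [\gamma_\triangle]$ in $H_1(M,\p M;\bbZ_2)$. The left-hand side vanishes precisely when $\Gamma_1$ and $\Gamma_2$ are homologous (by \Cref{def:hom}), and the right-hand side vanishes precisely when $\overline{\Gamma_1\triangle\Gamma_2}$ is null homologous, proving the equivalence. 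Finally, I would note that the well-definedness of these statements (independence of the choice of parameterizing curves $\{\gamma_a\}$) was already established in \Cref{sec:homdef} via \Cref{lem:param}, so no additional work is required there.

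I do not anticipate a genuine obstacle; the one point requiring care is verifying that the curves produced by \Cref{lem:compat} really do partition into the three classes $A_1,A_2,A_{12}$ with the correct covering properties of $\Gamma_1$, $\Gamma_2$ and $\overline{\Gamma_1\triangle\Gamma_2}$. This is a direct unpacking of \Cref{lem:compat} and needs only the observation that the endpoints shared between curves in different classes contribute to $C_0(M;\bbZ_2)$ and do not affect the 1-chain identity above.
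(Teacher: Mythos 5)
Your proof is correct and follows essentially the same route as the paper's: both apply \Cref{lem:compat} to get a common parameterization of $\Gamma_1\cup\Gamma_2$, represent $\Gamma_1$ and $\Gamma_2$ by subchains, and use $\bbZ_2$ cancellation of the shared segments to identify $\gamma_1+\gamma_2$ with a representative of $\overline{\Gamma_1\triangle\Gamma_2}$. The only cosmetic difference is that you partition the index set into three disjoint pieces $A_1\sqcup A_2\sqcup A_{12}$ where the paper uses two overlapping subsets and the symmetric difference $A_1\triangle A_2$ of indices; the argument is the same.
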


\begin{proof}
Parameterize $\Gamma_1 \cup \Gamma_2$ by curves $\{\gamma_a\}_{a \in A}$ as in \Cref{lem:compat}. Defining $A_1 = \{a \in A : \Im \gamma_a \subset \Gamma_1\} $ and likewise for $A_2$, we have $A = A_1 \cup A_2$ and $A_1 \cap A_2 =  \{a \in A : \Im \gamma_a \subset \Gamma_1 \cap \Gamma_2\} $. It follows that $\gamma_1 = \sum_{a \in A_1} \gamma_a$ represents $\Gamma_1$ and similarly for $\gamma_2 = \sum_{a \in A_2} \gamma_a$.  Since our coefficients are in $\bbZ_2$, we have
\begin{equation}
	\gamma_1 - \gamma_2 = \gamma_1 + \gamma_2 = \sum_{a \in A_1} \gamma_a + \sum_{a \in A_2} \gamma_a 
	= \sum_{a \in  A_1 \triangle A_2} \gamma_a,
\end{equation}
because each $\gamma_1$ with $a \in A_1 \cap A_2$ appears in the sum exactly twice and hence cancels. The chain on the right-hand side represents $\overline{\Gamma_1 \triangle \Gamma_2}$, so we conclude that $[\gamma_1 - \gamma_2] = 0$ if and only if $\overline{\Gamma_1 \triangle \Gamma_2}$ is null homologous.
\end{proof}

\Cref{thm:homdef} is then an immediate consequence of the following.

\begin{prop}
\label{lem:sep}
A piecewise $C^1$ cut $\Gamma \subset M$ is null homologous if and only if it is the boundary set of a bipartite partition.
\end{prop}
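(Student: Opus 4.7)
The plan is to prove both implications by exchanging the combinatorial datum of a 2-coloring of $P$ for the algebraic datum of a 2-chain $\omega$ whose boundary realizes $\gamma$ modulo $\p M$, where $\gamma \in C_1(M;\bbZ_2)$ represents $\Gamma$. I will fix a smooth triangulation of $M$ in which $\Gamma$ is a subcomplex\,---\,which exists by $C^1$-triangulability of surfaces\,---\,and which, after barycentric refinement, is fine enough that every 2-simplex has its interior contained in a single component $\Omega_i$ of $M \setminus \Gamma$.

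For ($\Leftarrow$), suppose $P = \{\Omega_i\}$ is bipartite, with 2-coloring $c \colon \{\Omega_i\} \to \bbZ_2$, and set $\omega \in C_2(M;\bbZ_2)$ to be the sum of those 2-simplices whose interior lies in a domain of color $1$. Computing $\p_2 \omega$: any interior triangulation edge neither on $\Gamma$ nor on $\p M$ is shared by two 2-simplices in the same $\Omega_i$ and hence cancels; any edge on $\Gamma$ separates two 2-simplices in differently colored domains and appears with coefficient $1$; and any edge on $\p M$ contributes to a 1-chain $\beta \in C_1(\p M;\bbZ_2)$. Thus $\p_2 \omega = \gamma + \beta$, so $[\gamma] = 0$ in $H_1(M,\p M;\bbZ_2)$.

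For ($\Rightarrow$), assume $[\gamma] = 0$. Using the equivalence of singular and simplicial relative homology, I can find simplicial chains $\omega \in C_2(M;\bbZ_2)$ and $\beta \in C_1(\p M;\bbZ_2)$ satisfying $\p_2 \omega = \gamma + \beta$ on the chosen triangulation. Define $c \colon \{\Omega_i\} \to \bbZ_2$ by letting $c(\Omega_i)$ be the mod-$2$ count of 2-simplices of $\omega$ lying in $\Omega_i$; this is independent of which 2-simplex in $\Omega_i$ is used as a reference, because every triangulation edge interior to $\Omega_i$ lies neither in $\Gamma$ nor in $\p M$ and thus has coefficient $0$ in $\p_2 \omega$. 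For neighbors $\Omega_i, \Omega_j$, any triangulation edge $e \subset \Gamma \cap \p\Omega_i \cap \p\Omega_j$ has coefficient $1$ in $\p_2 \omega$, which equals the parity difference of the 2-simplices on its two sides, giving $c(\Omega_i) \neq c(\Omega_j)$. Hence $c$ is a valid bipartition.

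The main obstacle will be the chain-level bookkeeping in the ($\Rightarrow$) direction, namely translating the singular hypothesis $[\gamma] = 0$ into a simplicial witness on a triangulation that refines $\Gamma$. A related subtlety is that \Cref{def:regSigma} permits cuts with interior loose endpoints (slits), and any such $\Gamma$ fails to be a relative cycle at the outset\,---\,its loose endpoint appears in $\p_1 \gamma$ with odd coefficient and is not absorbed by $\p M$; such cuts also produce self-neighboring domains that cannot be bipartite, so both sides of the equivalence fail in parallel and the proposition holds vacuously in this regime.
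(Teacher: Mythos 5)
Your proof is correct and follows essentially the same strategy as the paper: triangulate $M$ so that $\Gamma$ is a subcomplex, then trade the $\bbZ_2$-coloring of the partition for a simplicial 2-chain $\omega$ whose relative boundary is $\gamma$ (the paper's set $K^{(2)}_\Gamma$ is precisely the support of your $\omega$, and its intermediate condition \eqref{AGamma} is the statement $\p_2\omega = \gamma + \beta$ read edge by edge). One small wording slip: the coloring $c(\Omega_i)$ should be defined as the coefficient in $\omega$ of \emph{any} 2-simplex lying in $\Omega_i$, not the ``mod-2 count'' of such simplices\,---\,the well-definedness argument you give (interior edges of $\Omega_i$ have coefficient $0$ in $\p_2\omega$, so all 2-simplices of $\Omega_i$ share a coefficient) supports exactly this corrected definition.
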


\begin{figure}
\begin{tikzpicture}[ scale=0.9]
	\draw[thick,dashed] (-2,-2) -- (-2,2) -- (2,2) -- (2,-2) -- (-2,-2);
	\draw[very thick] (0,-2) -- (0,2);
	\node at (0,-2.5) {$\Gamma$};
	\node at (-1,0) {$\Omega_1$};
	\node at (1,0) {$\Omega_2$};
\end{tikzpicture}
\hspace{1cm}
\begin{tikzpicture}[ scale=0.9]
	\draw[thick,dashed] (-2,-2) -- (-2,2) -- (2,2) -- (2,-2) -- (-2,-2);
	\draw[very thick] (0,-2) -- (0,2);
	\node at (0,-2.5) {$\Gamma$};
	\draw[thick, blue] (-2,-2) -- (0,1) -- (-2,2);
	\draw[thick, blue] (2,-2) -- (0,0) -- (2,2);
\end{tikzpicture}
\hspace{1cm}
\begin{tikzpicture}[ scale=0.9]
	\draw[thick,dashed] (-2,-2) -- (-2,2) -- (2,2) -- (2,-2) -- (-2,-2);
	\draw[very thick] (0,-2) -- (0,2);
	\node at (0,-2.5) {$\Gamma$};
	\draw[thick, blue] (-2,-2) -- (0,1) -- (-2,2);
	\draw[thick, blue] (2,-2) -- (0,0) -- (2,2);
	\draw[thick, red] (-2,-2) -- (0,0);
	\draw[thick, red] (0,1) -- (2,2);
\end{tikzpicture}

\caption{Triangulating $\overline{\Omega}_1$ and $\overline{\Omega}_2$ does not necessarily give a valid triangulation of $M$ (center), but this can always be refined to produce a triangulation (right), as in the proof of \Cref{lem:sep}.
}
\label{fig:tri}
\end{figure}
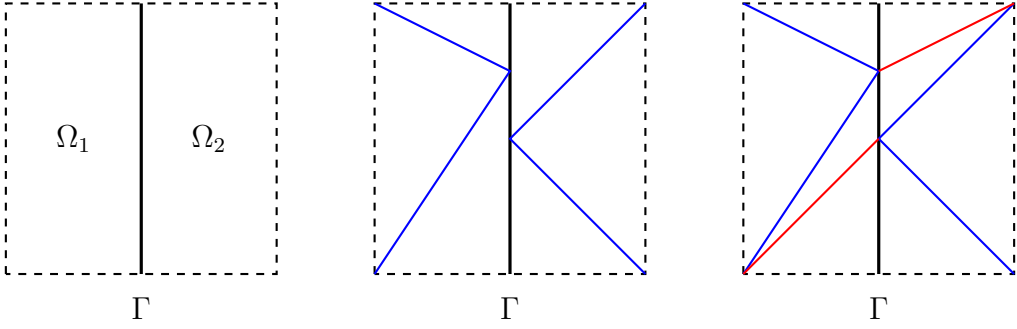

The proof uses simplicial (rather than singular) homology, with a triangulation of $M$ that is adapted to $\Gamma$. While the simplicial approach is convenient for studying a fixed cut, \Cref{def:hom} requires us to compare arbitrary cuts, a flexibility afforded by singular homology.

\begin{proof}
Let $P = \{\Omega_i\}$ denote the partition whose subdomains are the connected components of $M^o\backslash\Gamma$. Triangulating each $\overline{\Omega}_i$ and then subdividing as needed (see \Cref{fig:tri}), we obtain a triangulation of $M$ with the following properties:

\begin{itemize}
	\item The interior of each 2-simplex is contained in some $\Omega_i$.
	\item There is a subset of 1-simplices whose union is $\Gamma$.
	\item Each 1-simplex satisfies exactly one of the following:
	\begin{enumerate}
		\item[(a)] it is a face of two distinct 2-simplices; or
		\item[(b)] it is contained in $\p M$.
	\end{enumerate}
\end{itemize}

Writing the sets of 1- and 2-simplices as $K^{(1)}$ and $K^{(2)}$, respectively, let 
\begin{equation}
\label{gammasum}
	\gamma = \sum_{\tau \in K^{(1)}_\Gamma} \tau, \qquad K^{(1)}_\Gamma = \{\tau \in K^{(1)} : \tau \subset \Gamma\}.
\end{equation}
The isomorphism $H^\Delta_1(M,\p M; \bbZ_2) \to H_1(M,\p M; \bbZ_2)$ between simplicial and singular homology identifies $\gamma$ with a singular 1-chain representing $\Gamma$. Therefore, it suffices to show that 
$P$ is bipartite if and only if $[\gamma] = 0 \in H^\Delta_1(M,\p M; \bbZ_2)$. To prove this, we show that the following are equivalent:
\begin{enumerate}
\item $P$ is bipartite.
\item There exists $K^{(2)}_\Gamma \subset K^{(2)}$ such that
\begin{equation}
\label{AGamma}
	\tau \subset \Gamma \quad \Longleftrightarrow \quad \text{$\tau$ is a face of some $2$-simplices $\sigma \in K^{(2)}_\Gamma$ and $\tilde\sigma \notin K^{(2)}_\Gamma$.}
\end{equation}
\item $[\gamma] = 0 \in H^\Delta_1(M,\p M; \bbZ_2)$.
\end{enumerate}

Note that the condition in (2) means that each 1-simplex in the boundary set $\Gamma$ is the face of two different 2-simplices, exactly one of which is in $K^{(2)}_\Gamma$.

We first prove (1) implies (2). If $P$ is bipartite, then each $\Omega_i$, and hence the interior of each 2-simplex, is colored red or blue. Let $K^{(2)}_\Gamma = \{\sigma \in K^{(2)} : \mbox{int } \sigma \text{ is blue} \}$. If $\tau \subset \Gamma$, then it is not contained in $\p M$, and hence is the face of two distinct 2-simplices, $\sigma$ and $\tilde\sigma$, whose interiors are contained in subdomains $\Omega_i$ and $\Omega_j$. These subdomains are neighbors, and hence have different colors, so exactly one of $\sigma,\tilde\sigma$ is contained in $K^{(2)}_\Gamma$. This proves ($\Rightarrow$) in \eqref{AGamma}, and ($\Leftarrow$) is immediate.

Next, we show that (2) implies (1). If $K^{(2)}_\Gamma \subset K^{(2)}$ satisfies \eqref{AGamma}, we will define a coloring of $P$ by declaring that $\Omega_i$ is ``blue" if it contains the interior of some $\sigma \in K^{(2)}_\Gamma$. To see that this is well defined, we must show that the set $K^{(2)}_{\Omega_i} = \{\sigma \in K^{(2)} : \mbox{int } \sigma \subset \Omega_i\}$ is contained in either $K^{(2)}_\Gamma$ or its complement. If $\sigma, \tilde\sigma \in K^{(2)}_{\Omega_i}$ have a common face that is not contained in $\Gamma$, it follows from \eqref{AGamma} that $\sigma$ and $ \tilde\sigma$ are both in $K^{(2)}_\Gamma$ or both in its complement. Since $\overline{\Omega}_i$ is connected, we conclude that $K^{(2)}_\Gamma$ either contains $K^{(2)}_{\Omega_i}$ or is disjoint from it. 

We now declare $\Omega_i$ to be ``blue" if $K^{(2)}_{\Omega_i} \subset K^{(2)}_\Gamma$ and ``red" otherwise. Suppose $\Omega_i$ and $\Omega_j$ are neighbors, so there exist 2-simplices $\sigma$ and $\tilde\sigma$ with interiors in $\Omega_i$ and $\Omega_j$, respectively, having a common face contained in the boundary set $\Gamma$. It follows from \eqref{AGamma} that one of $\sigma, \tilde\sigma$ is contained in $K^{(2)}_\Gamma$ and the other is not. This means $\Omega_i$ and $\Omega_j$ have different colors, so we conclude that $P$ is bipartite, thus completing the proof that (1) is equivalent to (2).

Next, we show that (2) implies (3). If $K^{(2)}_\Gamma \subset K^{(2)}$ satisfying \eqref{AGamma} exists, we define
\begin{equation}
	\bar \sigma = \sum_{\sigma \in K^{(2)}_\Gamma} \sigma.
\end{equation}
From \eqref{AGamma} we know that every 1-simplex that is not in $\Gamma$ is either in $\p M$ or is the face of an even number of 2-simplices in $K^{(2)}_\Gamma$. It follows that $\p \sigma - \gamma$ is a sum of 1-simplices contained in $\p M$, therefore $\gamma$ is null homologous relative to $\p M$. 

Finally, we show that (3) implies (2). If $[\gamma] = 0 \in H^\Delta_1(M,\p M; \bbZ_2)$, then there is a 2-chain $\bar \sigma$ for which $\p\bar\sigma  - \gamma$ is supported in $\p M$. 
Defining $K^{(2)}_\Gamma$ to be the set of 2-simplices appearing in $\bar\sigma$, we conclude that every 1-simplex in $\gamma$ must be a face of exactly one 2-simplex in $K^{(2)}_\Gamma$, so \eqref{AGamma} holds.
\end{proof}

\subsection{Homology and odd points}
\label{sec:odd}

Given a piecewise $C^1$ cut $\Gamma$, we let $\Gamma_{\rm odd}$ denote the set of points in the interior of $M$ where an odd number of curves terminate. The following observation relates this to \Cref{def:hom}.

\begin{prop}
\label{prop:odd}
If $\Gamma_1$ is homologous to $\Gamma_2$, then $\Gamma_{\rm 1,odd} = \Gamma_{\rm 2,odd}$.
\end{prop}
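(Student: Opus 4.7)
The plan is to express $\Gamma_{\mathrm{odd}}$ in terms of the boundary operator on singular chains and then invoke the homology relation. Specifically, I would first show that if $\gamma \in C_1(M; \bbZ_2)$ is any 1-chain representing a piecewise $C^1$ cut $\Gamma$, then the 0-chain $\p_1 \gamma \in C_0(M; \bbZ_2)$, restricted to the interior $M^o$, is exactly $\sum_{p \in \Gamma_{\mathrm{odd}}} p$. The coefficient of an interior point $p$ in $\p_1 \gamma$ counts the endpoints of the parameterizing curves landing on $p$ modulo $2$, which is exactly the intrinsic parity of the number of branches of $\Gamma$ emanating from $p$.

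A short verification is needed that this description is independent of the chosen parametrization. Subdividing a parameterizing curve at an interior point $p$ (whether or not the curve originally ended at $p$) adds $0$ or $2$ to the endpoint count at $p$, preserving parity; reparameterization has no effect; and transverse crossings of two arcs at $p$, once subdivided into a valid parametrization, contribute an even number of endpoints at $p$, correctly placing $p$ outside $\Gamma_{\mathrm{odd}}$. This confirms that the $\bbZ_2$-coefficient of $p$ in $\p_1 \gamma$ depends only on $\Gamma$, not on the chain representative.

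With this identification in hand, the proposition follows from a one-line homological computation. By the definition of homology, there exist $\omega \in C_2(M; \bbZ_2)$ and $\beta \in C_1(\p M; \bbZ_2)$ with
\begin{equation}
    \gamma_1 - \gamma_2 = \p_2 \omega + \beta.
\end{equation}
Applying $\p_1$ and using $\p_1 \p_2 = 0$ yields $\p_1 \gamma_1 - \p_1 \gamma_2 = \p_1 \beta \in C_0(\p M; \bbZ_2)$, so the two 0-chains $\p_1 \gamma_1$ and $\p_1 \gamma_2$ agree upon restriction to $M^o$. By the identification of the first step, this gives $\Gamma_{1,\mathrm{odd}} = \Gamma_{2,\mathrm{odd}}$.

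The main obstacle is not the homological computation itself, which is immediate, but rather establishing step one carefully: namely, that restricting $\p_1 \gamma$ to the interior genuinely recovers the intrinsic set $\Gamma_{\mathrm{odd}}$ and that this set is well defined independently of the choice of parametrization. The crossing case in particular deserves a careful accounting, since a $C^1$ cut is allowed to have both endpoints and transverse self-intersections that must be subdivided before producing a valid chain representative.
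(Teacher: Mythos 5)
Your proposal is correct and follows essentially the same route as the paper: the paper isolates the key step as \Cref{lem:cycle}, which states that $\gamma_1-\gamma_2$ is a relative 1-cycle if and only if $\Gamma_{1,\rm odd}=\Gamma_{2,\rm odd}$, and then observes that being homologous forces $\gamma_1-\gamma_2$ to be a relative 1-cycle. Your version simply unpacks both halves of that lemma explicitly — identifying $\p_1\gamma$ restricted to $M^o$ with $\sum_{p\in\Gamma_{\rm odd}}p$ over $\bbZ_2$, and then applying $\p_1$ to $\gamma_1-\gamma_2=\p_2\omega+\beta$ — so the content matches, with your write-up being somewhat more careful about the parametrization-independence of $\Gamma_{\rm odd}$.
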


That is, a necessary condition for $\Gamma_1$ and $\Gamma_2$ to be homologous is that they have the same odd points. This is not, however, a sufficient condition, as shown in \Cref{fig:annulus}. The proposition is an immediate consequence of the following lemma, which also makes it clear why $\Gamma_{\rm 1,odd} = \Gamma_{\rm 2,odd}$ is not sufficient.

\begin{figure}
\begin{center}
\begin{tikzpicture}[ scale=0.7]
	\draw[thick,dashed] (2,0) arc[radius=2, start angle=0, end angle=360];
	\draw[thick,dashed] (1,0) arc[radius=1, start angle=0, end angle=360];
	\draw[ultra thick, blue] (0,1) -- (0,2);
	\draw[ultra thick, blue] (0,-1) -- (0,-2);
	\draw[ultra thick, blue] (1,0) -- (2,0);
	\draw[ultra thick, blue] (-1,0) -- (-2,0);
	\node[blue] at (0,-2.7) {$\Gamma_1$};
\end{tikzpicture}
\hspace{2cm}
\begin{tikzpicture}[ scale=0.7]
	\draw[thick,dashed] (2,0) arc[radius=2, start angle=0, end angle=360];
	\draw[thick,dashed] (1,0) arc[radius=1, start angle=0, end angle=360];
	\draw[ultra thick, red] (0,1) -- (0,2);
	\node[red] at (0,-2.7) {$\Gamma_2$};
\end{tikzpicture}
\hspace{2cm}
\begin{tikzpicture}[ scale=0.7]
	\draw[thick,dashed] (2,0) arc[radius=2, start angle=0, end angle=360];
	\draw[thick,dashed] (1,0) arc[radius=1, start angle=0, end angle=360];
	\draw[ultra thick, magenta] (0,-1) -- (0,-2);
	\draw[ultra thick, magenta] (1,0) -- (2,0);
	\draw[ultra thick, magenta] (-1,0) -- (-2,0);
	\node[magenta] at (0,-2.7) {$\overline{\Gamma_1 \triangle \Gamma_2}$};
\end{tikzpicture}
\end{center}
\caption{Two cuts of the annulus that have $\Gamma_{\rm 1,odd} = \Gamma_{\rm 2,odd} = \varnothing$ but are not homologous.}
\label{fig:annulus}
\end{figure}

\begin{lemma}
\label{lem:cycle}
Let $\gamma_1$ and $\gamma_2$ represent $\Gamma$ and $\Gamma_2$. The difference $\gamma_1 - \gamma_2$ is a relative 1-cycle if and only if $\Gamma_{\rm 1,odd} = \Gamma_{\rm 2,odd}$.
\end{lemma}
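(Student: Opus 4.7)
The plan is to compute the boundary $\p_1(\gamma_1 - \gamma_2) = \p_1\gamma_1 + \p_1\gamma_2$ directly (using that coefficients lie in $\bbZ_2$) and to identify its restriction to $M^o = M \setminus \p M$ with the indicator 0-chain of the symmetric difference $\Gamma_{\rm 1,odd}\triangle\Gamma_{\rm 2,odd}$. The condition that $\gamma_1 - \gamma_2$ be a relative 1-cycle is exactly that this interior part vanish.

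To carry this out, I would first fix an arbitrary piecewise $C^1$ cut $\Gamma$ with representation $\gamma = \sum_a \gamma_a$ and compute
\[
	\p_1 \gamma \;=\; \sum_a \bigl(\gamma_a(0) + \gamma_a(1)\bigr) \;\in\; C_0(M; \bbZ_2).
\]
Since the coefficients are in $\bbZ_2$, a point $p \in M$ appears in $\p_1 \gamma$ if and only if it is an endpoint of an odd number of curves $\gamma_a$. For $p \in M^o$, I would argue that this matches the defining criterion of $\Gamma_{\rm odd}$. The key point is that the parity of the number of curves terminating at an interior point of $\Gamma$ is an invariant of the cut: any two representations of $\Gamma$ differ by reparameterizations and by splittings of a single curve into two (cf.\ \Cref{lem:param}), and splitting a curve at a point adds exactly two endpoints there, preserving parity. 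This yields the decomposition
\[
	\p_1 \gamma_i \;=\; \sum_{p \in \Gamma_{i,\rm odd}} p + \beta_i, \qquad \beta_i \in C_0(\p M; \bbZ_2), \quad i = 1, 2.
\]

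Combining the two decompositions gives
\[
	\p_1(\gamma_1 + \gamma_2) \;=\; \sum_{p \in \Gamma_{\rm 1,odd} \triangle \Gamma_{\rm 2,odd}} p \;+\; (\beta_1 + \beta_2),
\]
so $\gamma_1 - \gamma_2 = \gamma_1 + \gamma_2$ has boundary lying in $C_0(\p M; \bbZ_2)$ if and only if $\Gamma_{\rm 1,odd} \triangle \Gamma_{\rm 2,odd} = \varnothing$, i.e., if and only if $\Gamma_{\rm 1,odd} = \Gamma_{\rm 2,odd}$. The main obstacle is the representation-independence of the parity count at each interior point, which justifies interpreting $\Gamma_{\rm odd}$ as well-defined and identifies it with the interior support of $\p_1 \gamma$; once this is in place, the rest follows immediately from the definition of a relative 1-cycle.
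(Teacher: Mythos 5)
Your proof is correct and follows essentially the same route as the paper, which disposes of this lemma in a single sentence noting that $\partial_1$ counts endpoints mod $2$ and that interior endpoints cancel in pairs. Your elaboration of why the parity count at each interior point is independent of the chosen representation (the one subtlety the paper glosses over) is a reasonable and correct addition, though the precise claim that any two representations differ only by reparameterizations and curve-splittings would be cleaner if phrased in terms of passing to a common subdivision; in any case the parity invariant you identify is exactly the local degree of $\Gamma$ at the point, which is intrinsic.
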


In other words, $\Gamma_{\rm 1,odd} = \Gamma_{\rm 2,odd}$ ensures the homology class $[\gamma_1 - \gamma_2] \in H_1(M, \p M; \bbZ_2)$ is defined, but does not guarantee that it is zero except in the case that $H_1(M, \p M; \bbZ_2)$ is trivial. This explains the presence of the annulus in \Cref{fig:annulus}\,---\,no such example is possible on a simply connected domain.

The proof of \Cref{lem:cycle} is a direct consequence of the definition of $\p_1$ and the fact that the endpoints of the curves in $\gamma_1$ and $\gamma_2$ will cancel in pairs, regardless of orientation, since we are using $\bbZ_2$ coefficients.

%%%%%%%%%%%%%%%%%%%%%%%%%%%%%%%%%%
%%%%%%%%%%%%%%%%%%%%%%%%%%%%%%%%%%
%%%%%%%%%%%%%%%%%%%%%%%%%%%%%%%%%%
%%%%%%%%%%%%%%%%%%%%%%%%%%%%%%%%%%
%%%%%%%%%%%%%%%%%%%%%%%%%%%%%%%%%%

\section{Homology and unitary equivalence}
\label{sec:unitary}

In this section we prove \Cref{prop:DG}. The case  when $\Gamma_1$ and $\Gamma_2$ intersect transversely uses the equivalent characterization of homology in \Cref{thm:homdef} and is presented in \Cref{sec:Tr}.  The general case is presented in \Cref{sec:Gr} and uses a transversality argument. We prove \Cref{cor:strict} in \Cref{Eq}.

\subsection{The transverse case}\label{sec:Tr}

The following characterization is useful for comparing partitions.

\begin{lemma}
\label{lem:decomp}
If $\Gamma_1$ and $\Gamma_2$ are piecewise $C^1$ cuts that intersect transversely, then 
\begin{align}
\begin{split}\label{H1description}
	W^{1,2}_0(M; \Gamma_1) = \Big\{ u \in W^{1,2}\big(M^o \backslash(\Gamma_1 \cup \Gamma_2)\big) : 
	\text{$u$ vanishes on $\p M$, is continuous} & \\
		\text{across $\Gamma_2$, and is anti-continuous across $\Gamma_1$} &\Big\}.
\end{split}
\end{align}
\end{lemma}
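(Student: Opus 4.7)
The plan is to reduce \eqref{H1description} to a purely local statement about Sobolev regularity across $\Gamma_2$, because the vanishing condition on $\p M$ and the anti-continuity across $\Gamma_1$ are imposed on both sides of \eqref{H1description} verbatim. Since $\Gamma_2$ has two-dimensional measure zero, the underlying $L^2$ spaces coincide, and it suffices to prove the equivalence
\[
u \in W^{1,2}(M^o \backslash \Gamma_1) \;\Longleftrightarrow\; u \in W^{1,2}\big(M^o \backslash (\Gamma_1 \cup \Gamma_2)\big) \text{ and $u$ is continuous across } \Gamma_2.
\]
Transversality enters because it forces $\Gamma_1 \cup \Gamma_2$ to itself be a piecewise $C^1$ cut and, near any point $p \in \Gamma_1 \cap \Gamma_2$, presents the local picture as two smooth curves crossing in an $X$-shape, producing Lipschitz sectors on all four sides.

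For the inclusion $\subseteq$, I would start with $u \in W^{1,2}_0(M; \Gamma_1)$, whose restriction to $M^o \backslash (\Gamma_1 \cup \Gamma_2)$ is automatically in $W^{1,2}$. To verify continuity across a regular arc $\Gamma_{2,a} \subset \Gamma_2$, observe that transversality together with compactness forces $\Gamma_{2,a} \cap \Gamma_1$ to be a finite set, so $\Gamma_{2,a} \backslash \Gamma_1$ consists of finitely many open subarcs, each of which admits a neighborhood in $M$ disjoint from $\Gamma_1$. On any such neighborhood $u \in W^{1,2}$, so its two one-sided traces onto $\Gamma_{2,a}$ agree there; since both traces lie in $W^{1/2,2}(\Gamma_{2,a})$ and differ only on a finite (hence null) set, they coincide as trace elements.

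For the reverse inclusion, I would use the standard Sobolev gluing lemma: if a function lies in $W^{1,2}$ on both sides of a shared Lipschitz interface and has matching one-sided traces there, then it lies in $W^{1,2}$ of the combined domain. Starting from $u$ in the RHS of \eqref{H1description}, the task is to show $u \in W^{1,2}(\Omega_i)$ for each component $\Omega_i$ of $M^o \backslash \Gamma_1$. I would pass to the Lipschitz extension $\tilde\Omega_i$ of \Cref{sec:def} and observe that transversality ensures $\Gamma_2 \cap \tilde\Omega_i$ cuts $\tilde\Omega_i$ into finitely many Lipschitz subdomains, on each of which $u$ is $W^{1,2}$. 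The hypothesized continuity across $\Gamma_2$ then lets me glue iteratively across the interior interfaces to conclude $u \in W^{1,2}(\tilde\Omega_i)$, hence $u \in W^{1,2}(\Omega_i)$, as required.

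The main obstacle I anticipate is bookkeeping around the finite set $\Gamma_1 \cap \Gamma_2$: at these crossing points the subdomains of $M^o \backslash (\Gamma_1 \cup \Gamma_2)$ have corners, and one has to be sure the gluing lemma still applies there. Transversality is precisely what keeps these subdomains Lipschitz. A secondary bookkeeping point is that the trace-based definition of anti-continuity across $\Gamma_1$ from \Cref{sec:def} uses a Lipschitz extension $\tilde\Gamma_1 \supset \Gamma_1$; I would resolve any ambiguity by choosing a common extension $\tilde\Gamma \supset \Gamma_1 \cup \Gamma_2$ and using it consistently, so that the $\Gamma_1$-traces appearing in the two sides of \eqref{H1description} are literally the same objects and the equivalence is about regularity alone.
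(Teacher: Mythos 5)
Your proposal is correct and takes essentially the same route as the paper: both reduce the statement to the single equivalence that, for $u \in W^{1,2}\bigl(M^o\backslash(\Gamma_1\cup\Gamma_2)\bigr)$, membership in $W^{1,2}(M^o\backslash\Gamma_1)$ is equivalent to continuity across $\Gamma_2$, which is then settled by trace matching and Sobolev gluing (the paper compresses this to ``follows from the definition of the weak derivative and integration by parts''). You simply spell out the gluing/trace details and the role of transversality more explicitly than the paper does.
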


\begin{proof}
Let $u \in W^{1,2}\big(M^o \backslash(\Gamma_1 \cup \Gamma_2)\big)$. Comparing \eqref{H1def} and \eqref{H1description}, it suffices to prove that $u$ is contained in $W^{1,2}(M^o\backslash\Gamma_1)$ if and only if it is continuous across $\Gamma_2$. This follows immediately from the definition of the weak derivative and integration by parts.
\end{proof}

Using this, we prove a special case of \Cref{prop:DG}.

\begin{lemma}
\label{lem:DG}
If $\Gamma_1$ and $\Gamma_2$ are homologous and intersect transversely, then $-\Delta^{\!\Gamma_1}$ and $-\Delta^{\!\Gamma_2}$ are unitarily equivalent. That is, there is a unitary operator $\Phi$ on $L^2(M)$ that maps $W^{1,2}_0(M;\Gamma_1)$ onto $W^{1,2}_0(M; \Gamma_2)$ and satisfies
\[
	b_{_{\Gamma_2}}(\Phi u, \Phi v) = b_{_{\Gamma_1}}(u,v)
\]
for all $u,v \in W^{1,2}_0(M; \Gamma_1)$.

\end{lemma}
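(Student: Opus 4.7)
My plan is to build $\Phi$ as multiplication by a piecewise-constant sign function coming from the bipartite partition supplied by \Cref{thm:homdef}. Since $\Gamma_1$ and $\Gamma_2$ intersect transversely, $\Gamma_1 \cup \Gamma_2$ is a piecewise $C^1$ cut, so $\Gamma_1$ and $\Gamma_2$ are compatible. Transversality also forces $\Gamma_1 \cap \Gamma_2$ to be a finite set of points, so $\overline{\Gamma_1 \triangle \Gamma_2} = \Gamma_1 \cup \Gamma_2$. Since $\Gamma_1$ is homologous to $\Gamma_2$, \Cref{thm:homdef} gives a bipartite partition $Q = \{Q_\alpha\}$ with boundary set $\Gamma_1 \cup \Gamma_2$. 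Fix a $2$-coloring of $Q$ and define $\sigma \colon M \to \{\pm 1\}$ by setting it equal to this color on each $Q_\alpha$ (extending arbitrarily on the measure-zero set $\Gamma_1 \cup \Gamma_2$). The operator $\Phi u = \sigma u$ is then an isometric involution on $L^2(M)$ since $\sigma^2 \equiv 1$, and it automatically preserves zero sets.

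Next I check that $\Phi$ sends $W^{1,2}_0(M;\Gamma_1)$ into $W^{1,2}_0(M;\Gamma_2)$. Transversality lets me parameterize $\Gamma_1 \cup \Gamma_2$ by regular segments each lying wholly in $\Gamma_1 \setminus \Gamma_2$ or wholly in $\Gamma_2 \setminus \Gamma_1$. By \Cref{lem:decomp}, a function $u \in W^{1,2}_0(M;\Gamma_1)$ lies in $W^{1,2}(M^o \setminus (\Gamma_1 \cup \Gamma_2))$, vanishes on $\p M$, is anti-continuous across each segment of $\Gamma_1$, and is continuous across each segment of $\Gamma_2$. The sign $\sigma$ flips across every segment of $\Gamma_1 \cup \Gamma_2$, because every such segment lies in the bipartite boundary of $Q$. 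Combining jump factors segment by segment: on a segment of $\Gamma_1 \setminus \Gamma_2$ both $u$ and $\sigma$ flip, so $\sigma u$ is continuous; on a segment of $\Gamma_2 \setminus \Gamma_1$ only $\sigma$ flips, so $\sigma u$ is anti-continuous. These are exactly the conditions characterizing $W^{1,2}_0(M;\Gamma_2)$ via \Cref{lem:decomp} with the roles of $\Gamma_1$ and $\Gamma_2$ interchanged, so $\Phi u \in W^{1,2}_0(M;\Gamma_2)$. Surjectivity follows from $\Phi^2 = I$.

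Finally, since $\sigma$ is locally constant on $M^o \setminus (\Gamma_1 \cup \Gamma_2)$ we have $\nabla(\sigma u) = \sigma \nabla u$ almost everywhere, hence $\nabla(\sigma u) \cdot \nabla(\sigma v) = \nabla u \cdot \nabla v$, and integrating over the components of $M^o \setminus \Gamma_2$ yields $b_{_{\Gamma_2}}(\Phi u, \Phi v) = b_{_{\Gamma_1}}(u,v)$. There is no single hard obstacle here: the conceptual content is already packaged in \Cref{thm:homdef}, which is precisely what converts the abstract homology hypothesis into the concrete $2$-coloring used to build $\sigma$. The main thing to keep track of is that the jump factors from $u$ and from $\sigma$ multiply to give exactly the right jump on each segment type, and transversality is what makes this case analysis clean by eliminating segments lying in $\Gamma_1 \cap \Gamma_2$ entirely.
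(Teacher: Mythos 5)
Your proposal is correct and follows essentially the same route as the paper's proof: invoke \Cref{thm:homdef} to produce a bipartite partition with boundary $\Gamma_1 \cup \Gamma_2$, define $\Phi$ as multiplication by the associated $\pm 1$ coloring function, and verify the membership and form identity segment-by-segment using \Cref{lem:decomp}. The only cosmetic difference is that you package $\Phi$ as multiplication by a sign function $\sigma$ rather than as a piecewise-defined operator on $U_\pm$, which is the same operator.
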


\begin{proof}
For convenience we set $\Gamma = \overline{\Gamma_1 \triangle\Gamma_2} = \Gamma_1 \cup \Gamma_2$. Since $\Gamma_1$ and $\Gamma_2$ are homologous, $\Gamma$ is the boundary set of a bipartite partition, by \Cref{thm:homdef}. Denoting this partition by $P = \{\Omega_i\}$, we let $U_+$ and $U_-$ denote the union of the blue and red subdomains, respectively, and then define $\Phi \colon L^2(M) \to L^2(M)$ by
\begin{equation}
\label{def:Phi}
	\Phi u = \begin{cases} u & \text{ on $U_+$}, \\
	-u & \text{ on $U_-$}.
	\end{cases}
\end{equation}
This is an isometric isomorphism of $L^2(M)$ by design.

Suppose $u \in W^{1,2}_0(M; \Gamma_1)$. By \Cref{lem:decomp} we have $u \in W^{1,2}(M^o \backslash(\Gamma_1 \cup \Gamma_2))$, $u$ vanishes on $\p M$, is continuous across $\Gamma_2$, and is anti-continuous across $\Gamma_1$. Since every connected component of $M^o \backslash(\Gamma_1 \cup \Gamma_2)$ is contained in either $U_+$ or $U_-$, we see that $\Phi u$ belongs to $W^{1,2}(M^o \backslash(\Gamma_1 \cup \Gamma_2))$ and still vanishes on $\p M$. Therefore, the desired conclusion will follow by applying \Cref{lem:decomp} with the roles of $\Gamma_1$ and $\Gamma_2$ reversed, if we show that $\Phi u$ is continuous across $\Gamma_1$ and anti-continuous across $\Gamma_2$.

Now consider a smooth segment $\Gamma_* \subset \Gamma$. Since $\Gamma_*$ is contained in $\pO_i \cap \pO_j$ for some $i$ and $j$, $\Omega_i$ and $\Omega_j$ are neighbors. Using that  $P$ is bipartite, $\Omega_i$ and $\Omega_j$  must have different colors, so one is contained in $U_+$ and the other is in $U_-$. It follows that $\Phi$ changes the sign of $u$ on exactly one side of $\Gamma_*$. Recalling that $\Gamma = \overline{\Gamma_1 \Delta \Gamma_2}$, there are two cases to consider:
\begin{enumerate}
	\item $\Gamma_* \subset \Gamma_1$:  \ $u$ is anti-continuous across $\Gamma_*$, therefore $\Phi u$ is continuous across $\Gamma_*$.
	\item $\Gamma_* \subset \Gamma_2$: \ $u$ is continuous across $\Gamma_*$, therefore $\Phi u$ is anti-continuous across $\Gamma_*$.
\end{enumerate}

This implies $\Phi u \in W^{1,2}_0(M; \Gamma_2)$, and it follows immediately that $b_{_{\Gamma_2}}(\Phi u, \Phi v) = b_{_{\Gamma_1}}(u,v)$ for all $u,v \in W^{1,2}_0(M; \Gamma_1)$. The same argument shows that $\Phi^{-1}$ maps $W^{1,2}_0(M; \Gamma_2)$ into $W^{1,2}_0(M; \Gamma_1)$ and completes the proof.
\end{proof}

\subsection{The general case}\label{sec:Gr}

The following result allows us to reduce the proof of \Cref{prop:DG} to the special case already considered in \Cref{lem:DG}.

\begin{prop}
\label{prop:deform}
Given piecewise $C^1$ cuts $\Gamma_1$ and $\Gamma_2$, there is a piecewise $C^1$ cut $\tilde\Gamma$ that is homologous to $\Gamma_1$ and intersects $\Gamma_1$ and $\Gamma_2$ transversely.
\end{prop}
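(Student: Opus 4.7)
The plan is to realize $\tilde\Gamma$ as the image $F(\Gamma_1)$ for a carefully chosen $C^1$ diffeomorphism $F\colon M\to M$ that fixes $\p M$ and the odd interior vertices of $\Gamma_1$ pointwise, and that is otherwise a generic small perturbation of the identity. Isotoping $F$ to the identity within this class of diffeomorphisms produces a $2$-chain witnessing $[\tilde\Gamma] = [\Gamma_1]$, and Thom's transversality theorem delivers the transverse intersections with $\Gamma_1$ and $\Gamma_2$.

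For the setup, parameterize $\Gamma_1$ and $\Gamma_2$ by regular $C^1$ curves $\{\gamma_a\}$ and $\{\delta_b\}$ meeting only at their endpoints (\Cref{def:regSigma}); let $V_i \subset M^o$ denote the interior vertices of $\Gamma_i$ for $i=1,2$, and let $V_1^{\rm odd} \subset V_1$ be the set of odd interior points of $\Gamma_1$ (cf.\ \Cref{sec:odd}). Consider the space $\mathcal D$ of $C^1$ diffeomorphisms of $M$ that restrict to the identity on $\p M \cup V_1^{\rm odd}$, equipped with the $C^1$ topology: a neighborhood of the identity in $\mathcal D$ is parameterized by $C^1$ vector fields on $M$ vanishing on $\p M \cup V_1^{\rm odd}$, and any such $F$ sufficiently close to $\mathrm{id}$ can be connected to $\mathrm{id}$ by an isotopy $\{F_s\}_{s\in[0,1]}$ through $\mathcal D$. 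Since $F$ is a diffeomorphism of $M$ preserving the cut structure, $F(\Gamma_1)$ is automatically a piecewise $C^1$ cut.

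For the homology, triangulate $[0,1]^2$ and form the singular $2$-chain $\sigma = \sum_a H_a \in C_2(M;\bbZ_2)$ given by $H_a(s,t) = F_s(\gamma_a(t))$. A direct boundary computation over $\bbZ_2$ yields
\[
\p_2 \sigma \;=\; \gamma + \tilde\gamma + \sum_{v:\, n_v \text{ odd}} e_v,
\]
where $\gamma$ and $\tilde\gamma$ represent $\Gamma_1$ and $F(\Gamma_1)$, the sum runs over endpoints $v$ of the $\gamma_a$ with multiplicity $n_v$, and $e_v(s) = F_s(v)$. By construction every $v$ with odd $n_v$ lies in $\p M \cup V_1^{\rm odd}$, where $F_s$ fixes $v$, so each surviving $e_v$ is a constant singular $1$-simplex: those with image in $\p M$ lie in $C_1(\p M;\bbZ_2)$, and those with image in $M^o$ are boundaries of constant $2$-simplices (since $\p_2$ of a constant $2$-simplex equals $3c = c$ in $\bbZ_2$). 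Hence $[\gamma - \tilde\gamma] = 0$ in $H_1(M,\p M;\bbZ_2)$, confirming that $\tilde\Gamma$ is homologous to $\Gamma_1$; equivalently, $\tilde\Gamma_{\rm odd} = F(V_1^{\rm odd}) = V_1^{\rm odd} = \Gamma_{1,\rm odd}$ ensures that $\gamma - \tilde\gamma$ is a relative $1$-cycle, by \Cref{lem:cycle}.

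For transversality, apply Thom's parametric transversality theorem to the evaluation maps $\mathcal D \times (0,1) \to M$, $(F,t) \mapsto F(\gamma_a(t))$: for each pair $(\gamma_a, \xi)$ with $\xi$ a regular arc in $\{\gamma_{a'}\}_{a'\ne a} \cup \{\delta_b\}$, the set of $F \in \mathcal D$ for which $F \circ \gamma_a$ meets the interior of $\xi$ transversely and avoids the finite set $(V_1 \cup V_2) \setminus V_1^{\rm odd}$ is open and dense in the $C^1$ topology; intersecting over the finite family of pairs produces a $C^1$-small $F \in \mathcal D$ realizing transversality away from $V_1^{\rm odd}$. At each $v \in V_1^{\rm odd}$, the tangent of $F \circ \gamma_a$ at $v$ equals $dF|_v\,\gamma_a'(v)$, and since $dF|_v$ is an independent parameter of $F \in \mathcal D$ (only the value $F(v)=v$ is constrained), a generic choice of $F$ makes these tangent vectors distinct from the tangents of all arcs of $\Gamma_1$ and $\Gamma_2$ through $v$, yielding transverse crossings there as well. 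The main obstacle is precisely this tension between fixing the odd interior vertices (required for $\tilde\gamma - \gamma$ to be a relative $1$-cycle) and producing transverse intersections at those same vertices; the resolution is that fixing $v$ only pointwise leaves $dF|_v$ unconstrained, so an appropriately generic $F \in \mathcal D$ twists the tangent frame at $v$ enough to achieve transversality.
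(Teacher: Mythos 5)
Your proof takes a genuinely different route. The paper deforms each arc of $\Gamma_1$ one at a time via a \emph{fixed-endpoint} homotopy in normal coordinates, pushing the middle of each arc to a constant height $t_*$ chosen generically by the parametric transversality theorem. Because every arc endpoint is fixed, homology preservation is immediate (a fixed-endpoint homotopy trivially produces a bounding $2$-chain), and after subdivision the deformed arcs form a piecewise $C^1$ cut. You instead take $\tilde\Gamma = F(\Gamma_1)$ for a global diffeomorphism $F$ fixing only $\p M \cup V_1^{\rm odd}$. This is heavier machinery but has the virtue that $F(\Gamma_1)$ is automatically a cut without re-subdivision, and it forces you into a boundary computation for the homology step --- namely, that the ``side'' $1$-simplices $e_v$ coming from moving endpoints cancel in pairs at even interior vertices and contribute either constant chains or $\p M$-supported chains at the remaining vertices, which is correct. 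Both proofs hit the same tension --- fixing the odd vertices pointwise while still achieving transversality there --- and both resolve it the same way (a generic slope $\epsilon$ in the paper, a generic $dF|_v$ in yours).

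Two gaps remain. First, your family of pairs $(\gamma_a,\xi)$ excludes $\xi = \gamma_a$; but $F\circ\gamma_a$ and $\gamma_a$ can certainly cross in their interiors (especially when $\gamma_a$ has both endpoints in $\p M \cup V_1^{\rm odd}$, so the two arcs share endpoints and together bound a bigon), and transversality of $\tilde\Gamma$ with $\Gamma_1$ requires handling $F\circ\gamma_a \pitchfork \gamma_a$. The same parametric transversality argument applies, since $F = \mathrm{id}$ is a single point and $\gamma_a(t) \notin \p M \cup V_1^{\rm odd}$ for $t\in(0,1)$, so the evaluation map is a submersion there --- but you should state it. Second, you only impose the $dF|_v$-genericity at $v \in V_1^{\rm odd}$, whereas the endpoints of $\gamma_a$ on $\p M$ are also fixed by $F$, and if some $\gamma_{a'}$ or $\delta_b$ shares that boundary endpoint you again need $dF|_p\gamma_a'$ to avoid finitely many directions. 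This is the same genericity argument, just applied at boundary vertices too; since $dF|_p$ restricted to $T_p(\p M)$ equals the identity but the off-diagonal entry remains free, the needed twist is available. Neither gap is fatal, but both should be filled for the argument to be complete.
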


\begin{proof}
We construct $\tilde\Gamma$ by deforming each segment of $\Gamma_1$ via a fixed endpoint homotopy. This guarantees $\tilde\Gamma$ is homologous to $\Gamma_1$. Moreover, we shall see that $\tilde\Gamma$ can be assumed to lie in an arbitrarily small tubular neighborhood of $\Gamma_1$.

Consider a regular $C^1$ curve $\gamma_1 \colon [0,1] \to M$ that is part of $\Gamma_1$. Without loss of generality (subdividing if necessary), we can assume $\gamma_1(0) \neq \gamma_1(1)$. Choosing a normal direction along $\gamma_1$ and letting $t$ denote the distance in this direction, we obtain coordinates $(s,t)$ in a tubular neighborhood of $\gamma_1$, with respect to which $\gamma_1$ is given by $\{ (s,0) : s \in [0,1]\}$. Equivalently, it is the graph of the function $t(s) = 0$.

For sufficiently small $\epsilon>0$, the deformed graph
\begin{equation}
	t_\epsilon(s) = \begin{cases} \epsilon s, & 0 \leq s < \frac12, \\
	\epsilon(1 - s), & \frac12 \leq s \leq 1, \end{cases}
\end{equation}
will be transverse to $\Gamma_1$, $\Gamma_2$ and $\p M$ at both endpoints, and hence for $s$ in some neighborhood $[0,\delta) \cup (1-\delta,1]$. On the other hand, the parametric transversality theorem (see, for instance, \cite[Lem.~II.4.6]{GG73}) guarantees that the ``shifted" curve $t(s) = t_*$ intersects $\Gamma_1$, $\Gamma_2$ and $\p M$ transversely for almost all $t_*$. Therefore, there exists $\delta_* \in (0, \delta)$ such that the graph of
\begin{equation}
\label{perturb}
	\tilde t(s) = \begin{cases} \epsilon s, & 0 \leq s < \delta_*, \\
	\epsilon \delta_*, & \delta_* \leq s \leq 1-\delta_*, \\
	\epsilon(1-s), & 1-\delta_* < s \leq 1,
	\end{cases}
\end{equation}
intersects $\Gamma_1$, $\Gamma_2$ and $\p M$ transversely. It follows immediately from \eqref{perturb} that this curve is homotopic (with fixed endpoints) to $\gamma_1$, and hence is homologous to $\gamma_1$.

To complete the proof we repeat the argument inductively, homotoping each segment of $\Gamma_1$ so that it is transverse to $\Gamma_1$, $\Gamma_2$, $\p M$ and the previously modified segments of $\Gamma_1$.
\end{proof}

\begin{figure}
\begin{tikzpicture}[ scale=1]
	\draw[->] (-0.3,0) -- (3.5,0);
	\draw[->] (0,-0.5) -- (0,1.5);
	\draw[thick] (1,-0.1) -- (1,0.1);
	\draw[thick] (2,-0.1) -- (2,0.1);
	\draw[very thick] (0,0) -- (3,0);
	\draw[very thick, blue] (0,0) -- (1.5,1) -- (3,0);
	\draw[very thick, red] (0,0.5) -- (3,0.5);
	\node[blue] at (2.25,1) {$t_\epsilon$};
	\node[red] at (3.4,0.5) {$t_*$};
	\node at (1,-0.5) {$\scriptstyle{\delta}$};
	\node at (2,-0.5) {$\scriptstyle{1 - \delta}$};
\end{tikzpicture}
\hspace{2cm}
\begin{tikzpicture}[ scale=1]
	\draw[->] (-0.3,0) -- (3.5,0);
	\draw[->] (0,-0.5) -- (0,1.5);
	\draw[thick] (0.75,-0.1) -- (0.75,0.1);
	\draw[thick] (2.25,-0.1) -- (2.25,0.1);
	\draw[very thick] (0,0) -- (3,0);
	\draw[very thick, magenta] (0,0) -- (0.75,0.5) -- (2.25,0.5) -- (3,0);
	\node[magenta] at (1.5,0.9) {$\tilde t$};
	\node at (0.75,-0.5) {$\scriptstyle{\delta_*}$};
	\node at (2.25,-0.5) {$\scriptstyle{1 - \delta_*}$};
\end{tikzpicture}
\caption{The graph of $t_\epsilon$ intersects $\Gamma_1$, $\Gamma_2$ and $\p M$ transversely for $s \in [0,\delta) \cup (1-\delta,1]$, whereas the constant $t_*$ does so for all $s \in [0,1]$. Gluing these together at some $\delta_* \in (0,\delta)$, we obtain a curve that has the same endpoints as $\gamma_1$ and intersects $\Gamma_1$, $\Gamma_2$ and $\p M$ transversely for all $s \in [0,1]$.}
\label{fig:trans}
\end{figure}
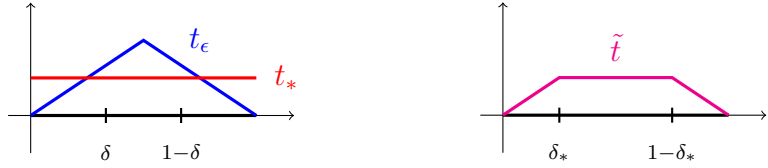

We are finally ready to prove our main proposition.

\begin{proof}[Proof of \Cref{prop:DG}]
Suppose $\Gamma_1$ and $\Gamma_2$ are homologous. Using \Cref{prop:deform} we can find a piecewise $C^1$ cut $\tilde\Gamma$ that is homologous to $\Gamma_1$ (and hence to $\Gamma_2$) and intersects both $\Gamma_1$ and $\Gamma_2$ transversely. Now \Cref{lem:DG} implies that $-\Delta^{\!\Gamma_1}$ and $-\Delta^{\!\Gamma_2}$ are both unitarily equivalent to $-\Delta^{\!\tilde\Gamma}$, so the result follows because unitary equivalence is transitive.
\end{proof}

To recap, we have now established \Cref{prop:DG} and thus \Cref{thm:main} which was derived from \Cref{prop:DG} in \Cref{sec:proof}.

\subsection{The case of equality: proof of \Cref{cor:strict}}
\label{Eq}

\Cref{cor:strict} is an immediate consequence of the following.

\begin{lemma}
\label{lem:equality}
Let $\Gamma$ be a piecewise $C^1$ cut. If $\tilde P$ is the nodal partition of an eigenfunction of $-\DG$ and $\p\tilde P$ is a piecewise $C^1$ cut, then $\p \tilde P$ is homologous to $\Gamma$.
\end{lemma}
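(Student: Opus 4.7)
The plan is to reduce to the case where $\Gamma$ and $\p\tilde P$ intersect transversely (so \Cref{thm:homdef} applies directly) and then exhibit a bipartite $2$-coloring of the partition with boundary set $\overline{\Gamma\triangle\p\tilde P}$ using the sign of $\psi$ itself.

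First I would invoke \Cref{prop:deform} to produce a piecewise $C^1$ cut $\tilde\Gamma$ that is homologous to $\Gamma$ and transverse to $\p\tilde P$, and then use \Cref{prop:DG} to transport $\psi$ to an eigenfunction $\Phi\psi$ of $-\Delta^{\tilde\Gamma}$ with the same nodal set, hence the same nodal partition $\tilde P$. By transitivity of homology it is then enough to show that $\tilde\Gamma$ is homologous to $\p\tilde P$. Since these two cuts are now compatible (transversality makes $\tilde\Gamma\cup\p\tilde P$ a piecewise $C^1$ cut), \Cref{thm:homdef} reduces the problem further to showing that $\overline{\tilde\Gamma\triangle\p\tilde P}$ is the boundary set of a bipartite partition.

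To produce the bipartite coloring, set $\epsilon := \operatorname{sign}(\Phi\psi)$ on $M^o\setminus(\tilde\Gamma\cup\p\tilde P)$, which is locally constant. The heart of the argument is a trichotomy for how $\epsilon$ jumps across a smooth segment $\sigma$. First, if $\sigma\subset\tilde\Gamma\setminus\p\tilde P$, then $\Phi\psi$ is nonvanishing on $\sigma$ and anti-continuous across it, so $\epsilon$ flips. Second, if $\sigma\subset\p\tilde P\setminus\tilde\Gamma$, then $\Phi\psi$ solves the Laplace eigenvalue equation in a two-sided neighborhood of $\sigma$ with vanishing trace on $\sigma$; the Hopf boundary-point lemma applied on each side forces $\p_\nu\Phi\psi\neq 0$ on $\sigma$, so $\Phi\psi$ changes sign and $\epsilon$ flips. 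Finally, if $\sigma\subset\tilde\Gamma\cap\p\tilde P$, elliptic regularity up to the smooth part of $\tilde\Gamma$ gives Taylor expansions $\Phi\psi(s,t)=a_\pm(s)\,t+O(t^2)$ in tubular coordinates with $\sigma=\{t=0\}$ and $t$ of opposite signs on the two sides; the transmission condition on $\p_\nu\Phi\psi$ from \eqref{DPboundary} forces $a_-(s)=-a_+(s)$, and comparing $\operatorname{sign}(a_+(s)\,t)$ for $t>0$ with $\operatorname{sign}(a_-(s)\,t)$ for $t<0$ yields that $\epsilon$ is \emph{preserved} across $\sigma$.

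Combining the three cases, $\epsilon$ descends to a locally constant function on the components of $M^o\setminus\overline{\tilde\Gamma\triangle\p\tilde P}$ (since crossings of $\tilde\Gamma\cap\p\tilde P$ do not change its value) that flips across every smooth segment of $\overline{\tilde\Gamma\triangle\p\tilde P}$; this is precisely the data of a bipartite $2$-coloring, so by \Cref{lem:sep} we get $\big[\overline{\tilde\Gamma\triangle\p\tilde P}\big] = 0$ and transitivity of homology finishes the proof. The main obstacle is the third case of the trichotomy: one needs enough boundary regularity of $\Phi\psi$ to justify the linear Taylor expansion, and then must carefully track the ``double sign flip'' (the sign of $t$ differs on the two sides, while $a_+$ and $a_-$ have opposite signs) that causes $\epsilon$ to be preserved rather than flipped. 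The other two cases are essentially standard.
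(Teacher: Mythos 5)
Your proposal is essentially the same argument as in the paper: reduce to the transverse case via \Cref{prop:deform} and \Cref{prop:DG}, then use \Cref{thm:homdef} and exhibit the bipartite coloring by reading off the sign of the eigenfunction. One point worth flagging: after you arrange for $\tilde\Gamma$ and $\p\tilde P$ to intersect transversely, the intersection $\tilde\Gamma\cap\p\tilde P$ is a \emph{finite set of points}, so it cannot contain a curve segment $\sigma$; your third case is vacuous, and the ``trichotomy'' collapses to the paper's two-case dichotomy. You identify the third case as ``the main obstacle'' and invest the most effort there (elliptic regularity up to the cut, the Taylor expansion, the double sign cancellation), but that effort is not needed precisely because of the transversality you already secured. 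The sign computation you carry out in that case is correct\,---\,indeed the paper notes this scenario in \Cref{rem:case3} as instructive\,---\,but including it as a genuine case suggests you have not fully registered that the transversality reduction was done exactly to make it disappear. The other two cases match the paper's reasoning (the paper asserts the sign change across $\p\tilde P\setminus\Gamma$ somewhat more tersely than your Hopf-lemma argument, but the content is the same).
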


In the proof it will be important to keep in mind that an eigenfunction of $-\DG$ does not necessarily change sign across its zero set, but can change sign without being zero, on account of the discontinuity across $\Gamma$. That is, the set where the eigenfunction changes sign need not coincide with its zero set.

\begin{proof}
Using \Cref{prop:deform}, we can obtain a cut $\hat\Gamma$ that is homologous to $\Gamma$ and intersects $\p \tilde P$ transversely. The operators $-\DG$ and $-\Delta^{\!\hat\Gamma}$ are unitarily equivalent by \Cref{prop:DG}. Moreover, the unitary map $\Phi$, defined in \eqref{def:Phi}, preserves zero sets, therefore $\p \tilde P$ is the nodal partition of an eigenfunction of $-\Delta^{\!\hat\Gamma}$.

We thus assume for the rest of the proof that $\Gamma$ intersects $\p \tilde P$ transversely. By \Cref{thm:homdef}, it suffices to show that $\Gamma \cup \p \tilde P$ is the boundary set of a bipartite partition. Let $\psi$ denote the eigenfunction of $-\DG$ whose nodal partition is $\tilde P$. It is only possible for $\psi$ to change sign where it equals zero or is discontinuous, i.e., on the set $\Gamma \cup \p\tilde P$. Since $\Gamma$ and $\p \tilde P$ intersect transversely, there are two possibilities, shown in \Cref{fig:sign}:
\begin{enumerate}
	\item $\p \tilde P \backslash \Gamma$: \ $\psi$ is zero and its normal derivative is continuous, so $\psi$ changes sign;
	\item $\Gamma \backslash \p \tilde P$: \  $\psi$ is nonzero and anti-continuous, so it changes sign;
\end{enumerate}
It follows that $\Gamma \cup \p \tilde P = \overline{\Gamma \triangle \p\tilde P}$ is precisely the set where $\psi$ changes sign, therefore it is the boundary set of a bipartite partition.
\end{proof}

\begin{rem}
\label{rem:case3}
In the proof of \Cref{lem:equality} we only needed to consider the case that $\Gamma$ and $\p\tilde P$ intersect transversely. Nonetheless, it is instructive to consider what happens when $\Gamma \cap \p\tilde P$ contains a curve segment. The eigenfunction $\psi$ will vanish on this segment (because it is contained in the nodal set $\p\tilde P$) and the normal derivative of $\psi$ will be anti-continuous (because it is contained in $\Gamma$), thus $\psi$ will not change sign across this segment. See \Cref{fig:sign}.
\end{rem}

\begin{figure}
\begin{tikzpicture}[ scale=0.8]
	\draw[->] (-2,0) -- (2,0);
	\draw[thick] (0,-0.13) -- (0,0.13);
	\draw[very thick] (-1.5,-1) -- (1.5,1);
	\node at (0,-1.8) {$\p \tilde P \backslash \Gamma$};
\end{tikzpicture}
\hspace{2cm}
\begin{tikzpicture}[ scale=0.8]
	\draw[->] (-2,0) -- (2,0);
	\draw[thick] (0,-0.13) -- (0,0.13);
	\draw[very thick] (-1.5,-0.5) -- (0,-1);
	\draw[very thick] (0,1) -- (1.5,1.5);
	\node at (0,-1.8) {$\Gamma \backslash \p \tilde P$};
\end{tikzpicture}
\hspace{2cm}
\begin{tikzpicture}[ scale=0.8]
	\draw[->] (-2,0) -- (2,0);
	\draw[thick] (0,-0.13) -- (0,0.13);
	\draw[very thick] (-1.5,1) -- (0,0);
	\draw[very thick] (0,0) -- (1.5,1);
	\node at (0,-1.8) {$\Gamma \cap \p \tilde P$};
\end{tikzpicture}
\caption{The graph of $\psi$ along a curve transverse to $\Gamma \cup \p\tilde P$, showing that it only changes sign on the symmetric difference $\overline{\Gamma \triangle \p\tilde P}$, as in \Cref{lem:equality} and \Cref{rem:case3}. }
\label{fig:sign}
\end{figure}
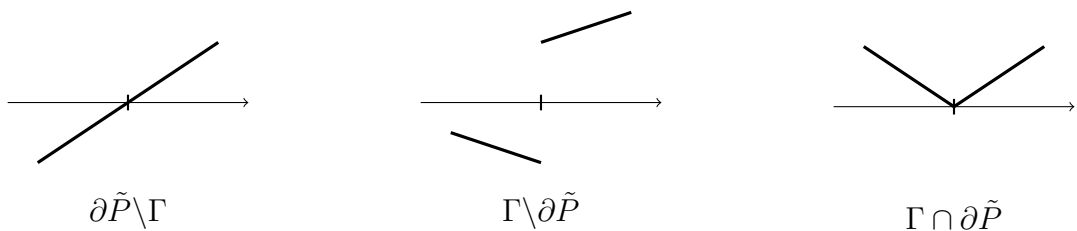

%%%%%%%%%%%%%%%%%%%%%%%%%%%%%%%%%%
%%%%%%%%%%%%%%%%%%%%%%%%%%%%%%%%%%
%%%%%%%%%%%%%%%%%%%%%%%%%%%%%%%%%%
%%%%%%%%%%%%%%%%%%%%%%%%%%%%%%%%%%
%%%%%%%%%%%%%%%%%%%%%%%%%%%%%%%%%%

\section{Defining partition eigenvalues for continuous cuts}
\label{sec:cts}

We now relax the regularity hypotheses in \Cref{thm:main}, replacing $\Gamma$ by an arbitrary 1-chain and resulting in the more general \Cref{thm:cts}. To remove the assumption that $\Gamma$ is piecewise $C^1$, we use the fact that the eigenvalues $\{\lambda_n(\Gamma)\}$ of $-\DG$ and the nodal domains of the corresponding eigenfunctions only depend on the homology class of $\Gamma$. This means we can define ``eigenvalues" and ``nodal partitions"  for any singular 1-chain by deforming it to a piecewise $C^1$ cut and then considering the eigenvalues and nodal domains of the corresponding cut Laplacian. Allowing $\p\tilde P$ to be less regular, on the other hand, is more delicate, and requires an approximation argument which is given in \Cref{lem:approx}.

Recalling the notion of a piecewise $C^1$ cut from \Cref{def:regSigma} and a singular 1-chain from \Cref{sec:homdef}, we have the following.

\begin{lemma}
\label{lem:chaincut}
Every singular 1-chain $\gamma \in C_1(M; \bbZ_2)$ is homologous to 
a finite sum of regular $C^1$ curves $\{\tilde\gamma_a\}$ that intersect one another (and $\p M$) transversely, and only do so at their endpoints. Each of these curves can be taken to lie in an arbitrarily small neighborhood of $\gamma$.
\end{lemma}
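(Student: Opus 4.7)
The plan is to deform a general singular 1-chain $\gamma$ to a piecewise $C^1$ cut in three stages: (1) approximate each singular 1-simplex in $\gamma$ by a piecewise-geodesic curve sharing its endpoints, thereby obtaining a homologous 1-chain built from regular $C^1$ segments; (2) apply small fixed-endpoint perturbations, as in the proof of \Cref{prop:deform}, so that the resulting segments meet one another, $\p M$, and the original $\gamma$ transversely; (3) subdivide the segments at their interior crossings, using \Cref{lem:param}, so that they intersect only at endpoints. All three stages preserve the homology class and can be carried out inside any prescribed neighborhood of the image of $\gamma$.

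For Stage 1, write $\gamma = \sum_i \sigma_i$ with $\sigma_i \colon [0,1] \to M$ continuous, discarding any constant $\sigma_i$ (which is null homologous, bounded by a constant 2-simplex). Fix a neighborhood $U$ of the image of $\gamma$. For each $i$, choose a subdivision $0 = t_{i,0} < \cdots < t_{i,n_i} = 1$ fine enough that each arc $\sigma_i([t_{i,k-1}, t_{i,k}])$ lies in a geodesically convex ball contained in $U$, and let $\tau_{i,k}$ be the constant-speed minimizing geodesic from $\sigma_i(t_{i,k-1})$ to $\sigma_i(t_{i,k})$, discarding any trivial pieces. Each $\tau_{i,k}$ is a regular $C^1$ curve in $U$, and the straight-line geodesic homotopy inside each convex ball gives a homotopy rel endpoints from $\sigma_i|_{[t_{i,k-1}, t_{i,k}]}$ to $\tau_{i,k}$. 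Concatenating, $\sigma_i$ is homotopic rel endpoints to $\tau_{i,1} \ast \cdots \ast \tau_{i,n_i}$, which by \Cref{lem:param} is homologous to $\sum_k \tau_{i,k}$. Hence $\sum_{i,k} \tau_{i,k}$ is homologous to $\gamma$.

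For Stage 2, process the $\tau_{i,k}$ one at a time, perturbing each by a fixed-endpoint homotopy of the kind constructed in \Cref{prop:deform}; the parametric transversality theorem provides an arbitrarily small perturbation, supported in a tube inside $U$, that makes the curve transverse to $\p M$, to all other (already-perturbed) $\tau_{j,l}$, and to $\gamma$. Each perturbation preserves the homology class, and the result is a 1-chain homologous to $\gamma$ whose underlying curves cross only at finitely many points in $M^o$. For Stage 3, subdivide each $\tau_{i,k}$ at the parameter values of these interior crossings. Invoking \Cref{lem:param} again, the resulting collection $\{\tilde\gamma_a\}$ is homologous to $\sum_{i,k} \tau_{i,k}$, and hence to $\gamma$. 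It consists of regular $C^1$ curves inside $U$ that intersect one another and $\p M$ transversely and only at endpoints.

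The main obstacle is Stage 1, where we need the piecewise-geodesic approximation to simultaneously consist of regular $C^1$ pieces, match the endpoints of each $\sigma_i$, and be homotopic rel endpoints to $\sigma_i$ inside $U$; geodesic convexity of the subdivision balls handles all three requirements at once. A minor subtlety in Stage 3 is that two pieces obtained by subdividing the same $\tau_{i,k}$ share a tangent line at the subdivision point. We adopt the convention that "transverse at endpoints" means distinct tangent \emph{directions}, which holds here since the two pieces leave the common point in opposite directions. In any case, the downstream use of this lemma in \Cref{sec:cts} depends only on the image of the cut and the operator $-\DG$ it determines, not on how $\Gamma$ is presented as a sum of curves.
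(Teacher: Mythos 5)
Your proof is correct and follows essentially the same three-step strategy as the paper's: smooth the singular simplices, perturb to achieve transversality via the argument of \Cref{prop:deform}, then subdivide at interior crossings. The main difference is that the paper simply asserts that smoothing a continuous curve rel endpoints in a small neighborhood "is standard," whereas you supply a concrete mechanism via subdivision into geodesically convex balls and piecewise-geodesic replacement together with the geodesic (``straight-line'') homotopy. This is a reasonable way to make the standard fact explicit, and your observations about discarding constant simplices and trivial geodesic pieces are correct. Two small remarks: (i) your Stage 3 worry about consecutive pieces of a subdivided $\tau_{i,k}$ sharing a tangent line is essentially illusory --- those pieces leave the subdivision point in opposite directions and the subdivision point is a transverse crossing with a \emph{different} curve, which is the situation \Cref{def:regSigma} is designed to accommodate; alternatively, one may recombine consecutive smoothly-joined segments after subdivision. (ii) Near $\p M$ the existence of geodesically convex balls requires a bit of care (e.g.\ pass to the double and project back, or use boundary normal coordinates); the paper's appeal to a ``standard'' smoothing quietly absorbs this as well, so you are not on weaker ground, but it is worth being aware of.
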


By a slight abuse of terminology we will say that $\gamma$ is homologous to the piecewise $C^1$ cut generated by the $\tilde\gamma_a$.

\begin{proof}
Write $\gamma = \sum_a \gamma_a$, where each $\gamma_a$ is a continuous curve in $M$. It is standard that each $\gamma_a$ is homotopic (with fixed endpoints) to a smooth curve $\hat\gamma_a$, and this approximation can be done in an arbitrarily small neighborhood of $\gamma_a$. Since there are only finitely many $\hat\gamma_a$, we can deform them one at a time, using \Cref{prop:deform}, so that they intersect one another and $\p M$ transversely. Subdividing if necessary, we can assume that these intersections only occur at the endpoints.
\end{proof}

We can now give our main definition.

\begin{define}
Fix $\gamma \in C_1(M; \bbZ_2)$ and let $\Gamma$ be a piecewise $C^1$ cut homologous to $\gamma$.
\begin{enumerate}
  \item The \emph{eigenvalues of $\gamma$} are the numbers $\lambda_n(\gamma) = \lambda_n(\Gamma)$, $n \geq 1$.
	\item  A partition of $M$ is \emph{$\gamma$-nodal} if it is the nodal partition of an eigenfunction of $-\Delta^{\!\Gamma}$.
    \item A $\gamma$-nodal $k$-partition, corresponding to an eigenvalue $\lambda_*$,  is \emph{Courant sharp} if $k = \min\{n : \lambda_n(\gamma) = \lambda_*\}$.
\end{enumerate}
\end{define}

\Cref{lem:chaincut} guarantees that such a $\Gamma$ exists, and \Cref{prop:DG} ensures that the corresponding eigenvalues and nodal partitions do not depend on the choice of $\Gamma$. It follows that homologous 1-chains $\gamma_1, \gamma_2 \in C_1(M; \bbZ_2)$ have the same eigenvalues and nodal partitions.

Analogous to \eqref{Pkdef}, we define
\begin{equation}
\label{Pkdef2}
	\cP^0_k(\gamma) = \big\{ \tilde P \in \PPk : \p \tilde P \text{ contains the image of a 1-chain that is homologous to } \gamma \big\}.
\end{equation}
Since the curves in a 1-chain are just assumed to be continuous, the boundary set of a partition in $\cP^0_k(\gamma)$ can be quite irregular. Therefore, even if a 1-chain $\gamma$ represents a piecewise $C^1$ cut $\Gamma$, the set $\cP^0_k(\gamma)$ is larger than the set $\cP_k(\Gamma)$ defined in \eqref{Pkdef}.

This means the following result is stronger\footnote{Except when $\Gamma$ is null homologous, in which case $\cP_k(\Gamma) = \cP^0_k(\gamma) = \cP_k$ is the set of all $k$-partitions and the two theorems are equivalent.} than \Cref{thm:main}, no matter how regular $\gamma$ is.

\begin{theorem}
\label{thm:cts}
If $\gamma \in C_1(M; \bbZ_2)$ and $k\in \mathbb N$, then
\begin{equation}
	\lambda_k(\gamma) \leq \inf \big\{\Lambda(\tilde P):\; \tilde P \in \cP^0_k(\gamma) \big\},
\end{equation}
with $\lambda_k(\gamma) = \Lambda(\tilde P)$ if and only if $\tilde P$ is a $\gamma$-nodal partition with eigenvalue $\lambda_k(\gamma)$.
In particular, if $P \in \PPk$ is a Courant sharp $\gamma$-nodal partition, then $\Lambda(P) \leq \Lambda(\tilde P)$ 
for all $\tilde P \in \cP^0_k(\gamma)$. 
%In either case, equality holds if and only if $\tilde P$ is a $\gamma$-nodal partition with eigenvalue $\lambda_k(\gamma)$.
\end{theorem}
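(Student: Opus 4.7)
The plan is a two-stage reduction to \Cref{thm:main}. The first stage handles the generality of the cut: by \Cref{lem:chaincut} we may choose a piecewise $C^1$ cut $\Gamma$ homologous to $\gamma$, and by definition $\lambda_k(\gamma) = \lambda_k(\Gamma)$ while the $\gamma$-nodal partitions are precisely the nodal partitions of eigenfunctions of $-\DG$. The substantive work lies in the second stage, which accommodates the more general partitions $\tilde P \in \cP^0_k(\gamma)$ whose boundary sets need only be continuous rather than piecewise $C^1$; this is where \Cref{lem:approx} enters.

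For the inequality, given $\tilde P \in \cP^0_k(\gamma)$, I would invoke \Cref{lem:approx} to produce a sequence $\{\tilde P_j\}$ of partitions with piecewise $C^1$ boundary sets, each $\p\tilde P_j$ containing a piecewise $C^1$ cut $\Gamma_j$ homologous to $\gamma$ (so that $\tilde P_j \in \cP_k(\Gamma_j)$), and with $\limsup_{j\to\infty} \Lambda(\tilde P_j) \leq \Lambda(\tilde P)$. Heuristically one shrinks each subdomain $\tilde\Omega_i$ slightly to a Lipschitz subdomain $\tilde\Omega_{i,j} \subset \tilde\Omega_i$; domain monotonicity gives $\lambda_1(\tilde\Omega_i) \leq \lambda_1(\tilde\Omega_{i,j})$, while continuity of $\lambda_1$ under such interior perturbations yields the required energy convergence. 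Applying \Cref{thm:main} to each approximation gives $\lambda_k(\gamma) = \lambda_k(\Gamma_j) \leq \Lambda(\tilde P_j)$, and passing to the limit yields the desired $\lambda_k(\gamma) \leq \Lambda(\tilde P)$.

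The easy direction of the equality claim is immediate from \Cref{thm:main}: if $\tilde P$ is the nodal partition of an eigenfunction of $-\DG$ at eigenvalue $\lambda_k(\gamma)$, then each $\lambda_1(\tilde\Omega_i) = \lambda_k(\gamma)$ and hence $\Lambda(\tilde P) = \lambda_k(\gamma)$. For the converse, if $\Lambda(\tilde P) = \lambda_k(\gamma)$ then in the approximation $\Lambda(\tilde P_j) \to \lambda_k(\gamma) = \lambda_k(\Gamma_j)$, forcing each groundstate energy $\lambda_1(\tilde\Omega_{i,j})$ to approach $\lambda_k(\gamma)$. A compactness argument applied to the resulting family of test functions---combining the $\tilde\Omega_i$-groundstates into a linear combination and transferring them through the unitary $\Phi$ of \Cref{prop:DG}---should extract a limiting eigenfunction of $-\DG$ saturating the Rayleigh quotient in \eqref{Ray}, and since $\Phi$ preserves zero sets the nodal partition of this eigenfunction must coincide with $\tilde P$.

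The main obstacle will be establishing \Cref{lem:approx}: one needs a regularization procedure that simultaneously (a) yields a piecewise $C^1$ boundary set with Lipschitz subdomains, (b) preserves a $1$-chain of the boundary in the homology class of $\gamma$, and (c) controls the Dirichlet groundstate energies in the limit. The tension is that $\p\tilde P$ may be very irregular away from the topologically relevant $1$-chain, so the regularization must be carefully localized so as not to destroy the homological feature of interest while still smoothing the wild remainder.
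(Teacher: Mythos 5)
Your first stage is correct and matches the paper: reduce via \Cref{lem:chaincut} and \Cref{prop:DG} to a piecewise $C^1$ cut $\Gamma$ homologous to $\gamma$. Your second stage, however, misreads \Cref{lem:approx}. You interpret that lemma as producing a sequence of regularized partitions $\{\tilde P_j\}$ (shrinking each $\tilde\Omega_i$ to a Lipschitz subdomain and controlling $\Lambda$ in the limit), and you then plan to apply \Cref{thm:main} to each $\tilde P_j$ and pass to the limit. But \Cref{lem:approx} makes no claim about approximating partitions; it asserts directly the existence of a single unitary operator $\Phi$ on $L^2(M)$ mapping $\bigoplus_i W^{1,2}_0(\tilde\Omega_i)$ into $W^{1,2}_0(M;\Gamma)$ isometrically for the Dirichlet energy. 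Once this $\Phi$ is in hand, the proof of \Cref{thm:cts} is \emph{verbatim} the proof of \Cref{thm:main}: apply $\Phi$ to the groundstates $\tilde\psi_i$, form a linear combination orthogonal to the first $k-1$ eigenfunctions of $-\DG$, and plug into the Rayleigh quotient \eqref{Ray}. No limiting argument over partitions and no compactness extraction in the equality case are needed.

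The actual mechanism of \Cref{lem:approx} is worth understanding, because it sidesteps precisely the obstacle you identified at the end. Instead of regularizing $\partial\tilde P$ (which, as you note, risks destroying the homological feature), the paper regularizes the \emph{cut}: for a smooth $u_i$ with compact support $K\subset\tilde\Omega_i$, \Cref{lem:chaincut} produces a piecewise $C^1$ cut $\tilde\Gamma_n$ homologous to $\gamma$ that lies in a small neighborhood of the $1$-chain $\tilde\gamma\subset\partial\tilde P$ and therefore avoids $K$. The extension of $u_i$ by zero is then trivially in $W^{1,2}_0(M;\tilde\Gamma_n)$, and \Cref{prop:DG} transfers it unitarily to $W^{1,2}_0(M;\Gamma)$. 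Density of compactly supported functions in $W^{1,2}_0(\tilde\Omega_i)$, together with the energy identity \eqref{Phi0}, extends $\Phi^i$ to all of $W^{1,2}_0(\tilde\Omega_i)$. The irregular parts of $\partial\tilde P$ away from $\tilde\gamma$ never enter, because only the zero extension of functions vanishing near $\partial\tilde\Omega_i$ is ever used. Your domain-shrinking route faces two genuine difficulties the paper avoids: (i) one must show the shrunken boundary $\partial\tilde P_j$ still contains a piecewise $C^1$ cut homologous to $\gamma$, which is not automatic since the topological feature lives in $\partial\tilde P$, not $\partial\tilde P_j$; and (ii) the equality characterization requires a compactness argument to extract a limiting eigenfunction, with no clean mechanism for showing its nodal partition is $\tilde P$ rather than some limit of the $\tilde P_j$.
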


The proof is similar to that of \Cref{thm:main}, with one crucial difference. The boundary set $\p\tilde P$ contains the image of a 1-chain $\tilde\gamma$ that is homologous to $\gamma$, but since $\tilde\gamma$ is not assumed to represent a piecewise $C^1$ cut, we cannot directly use \Cref{prop:DG} to transform a $\tilde P$-ground state into a test function for the cut Laplacian, as was done above. We instead use the following.

\begin{lemma}
\label{lem:approx}
Let $\gamma \in C_1(M; \bbZ_2)$ and suppose $\tilde P = \{\tilde\Omega_i\} \in \cP^0_k(\gamma)$. For any piecewise $C^1$ cut $\Gamma$ that is homologous to $\gamma$, 
there is a unitary operator $\Phi$ on $L^2(M)$ that maps
$\bigoplus_{i=1}^k W^{1,2}_0(\tilde\Omega_i)$ into $W^{1,2}_0(M;\Gamma)$
and satisfies
\begin{equation}
\label{hat}
	b_{_{\Gamma}}(\Phi u, \Phi u) = \sum_{i=1}^k \int_{\tilde\Omega_i} |\nabla u_i |^2\,dV
\end{equation}
for all $u = (u_1, \ldots, u_k) \in \bigoplus_{i=1}^k W^{1,2}_0(\tilde\Omega_i)$.
\end{lemma}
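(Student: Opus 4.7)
The plan is to imitate the sign-flip construction in Lemma \ref{lem:DG}, but the flip must now occur across $\Gamma$ and across a piece of $\p\tilde P$, rather than across a second piecewise $C^1$ cut. Since $\p\tilde P$ is not assumed regular, I build the sign function by approximation.

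By hypothesis there is a 1-chain $\tilde\gamma$ with image in $\p\tilde P$ that is homologous to $\gamma$, hence to $\Gamma$. Using \Cref{lem:chaincut} and \Cref{prop:deform}, construct a nested sequence of piecewise $C^1$ cuts $\tilde\Gamma_n$, each homologous to $\tilde\gamma$, each transverse to $\Gamma$, and with $\tilde\Gamma_n$ lying in a tubular $\epsilon_n$-neighborhood $N_n$ of $\tilde\gamma$, $\epsilon_n\downarrow 0$. By \Cref{thm:homdef}, $\overline{\tilde\Gamma_n \triangle \Gamma}$ is the boundary set of a bipartite partition of $M$ with blue/red regions $U_\pm^n$; set $\sigma_n := \mathbf 1_{U_+^n} - \mathbf 1_{U_-^n}$, with the bipartite coloring normalized so that $\sigma_n(x_0)=+1$ at a fixed reference point $x_0\notin\bigcup_n N_n$. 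Since the $\sigma_n$ stabilize outside each $N_n$ and $\bigcap_n N_n \subset \tilde\gamma\subset\p\tilde P$ has two-dimensional measure zero, passing to a subsequence yields a measurable limit $\sigma \colon M\to\{\pm1\}$ with $|\sigma|=1$ a.e. By construction, $\sigma$ is locally constant on each component of $M\setminus(\Gamma\cup\p\tilde P)$ and flips across every smooth point of $\Gamma\setminus\p\tilde P$. Define $\Phi u := \sigma u$, which is manifestly unitary on $L^2(M)$.

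To verify $\Phi\bigl(\bigoplus_i W^{1,2}_0(\tilde\Omega_i)\bigr)\subset W^{1,2}_0(M;\Gamma)$ with the energy identity \eqref{hat}, take $u=\sum_i u_i$ with $u_i\in W^{1,2}_0(\tilde\Omega_i)$ extended by zero. Then $u\in W^{1,2}(M)$ with zero trace on $\p M$ and on $\p\tilde P$. Computing the distributional gradient of $\sigma u$ on $M^o\setminus\Gamma$, the product rule gives $\sigma\nabla u$ plus a distribution supported on the jump set of $\sigma$ restricted to $M^o\setminus\Gamma$, which lies in $\p\tilde P\setminus\Gamma$; there the two-sided traces of $u$ vanish, so this distribution is zero. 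Hence $\sigma u\in W^{1,2}(M^o\setminus\Gamma)$ with $|\nabla\Phi u|=|\nabla u|$ almost everywhere. The vanishing on $\p M$ is inherited from $u$. On a segment of $\Gamma$ interior to some $\tilde\Omega_i$, the function $u$ is continuous and $\sigma$ flips, so $\Phi u$ is anti-continuous; on a segment of $\Gamma\cap\p\tilde P$ the two one-sided traces of $u$ are both zero and anti-continuity is trivial. This gives $\Phi u\in W^{1,2}_0(M;\Gamma)$, and
\[
b_{\Gamma}(\Phi u,\Phi u) \;=\; \int_M |\sigma\nabla u|^2\,dV \;=\; \sum_{i=1}^k\int_{\tilde\Omega_i}|\nabla u_i|^2\,dV.
\]

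The main obstacle is establishing that $\sigma$ is well-defined as an a.e.\ limit whose jump set is contained in $\Gamma\cup\p\tilde P$; the reference-point normalization handles the global sign ambiguity, but one must check that the $\sigma_n$ genuinely stabilize on the complement of $N_n$, which is why the $\tilde\Gamma_n$ are taken nested. An equivalent, limit-free route is to work directly with a 2-chain $\omega\in C_2(M;\bbZ_2)$ satisfying $\p_2\omega = \tilde\gamma+\Gamma+\beta$ for some $\beta\in C_1(\p M;\bbZ_2)$, and to set $\sigma(x)=(-1)^{m_\omega(x)}$ where $m_\omega(x)$ is the mod-$2$ multiplicity of $\omega$ at $x$; the topological boundary of $\{\sigma=+1\}$ then lies in $\Gamma\cup\p\tilde P\cup\p M$, and the Sobolev verification proceeds verbatim.
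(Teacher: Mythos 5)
Your approach is the same in spirit as the paper's---both build $\Phi$ as multiplication by a $\pm1$-valued function $\sigma$ obtained as a limit of sign functions coming from \Cref{lem:DG} applied to piecewise $C^1$ cuts $\tilde\Gamma_n$ approximating $\tilde\gamma$---but it is organized globally rather than subdomain by subdomain, and this shifts the burden onto a claim that you assert but do not prove. The paper fixes one connected $\tilde\Omega_i$ at a time, exhausts it by \emph{connected} compacts $K_1\subset K_2\subset\cdots$, and chooses $\tilde\Gamma_n$ disjoint from $K_n$; then for $n\ge m$ the ratio $\sigma_n/\sigma_m$ is continuous across $\Gamma$ and hence locally constant on $K_m$, so connectedness forces it to be a single sign, which can be absorbed into $\Phi_n$. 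In your version the analogous claim is that ``the $\sigma_n$ stabilize outside $N_n$.'' Nesting of the tubular neighborhoods $N_n$ is \emph{not} by itself enough: $\sigma_n/\sigma_m$ jumps across $\overline{\tilde\Gamma_n\triangle\tilde\Gamma_m}\subset N_m$, so it is only locally constant on $M\setminus N_m$, which can have several components with a priori different signs; the reference-point normalization pins down only the component containing $x_0$. What saves your argument is something stronger that \Cref{lem:chaincut} actually provides but you never invoke: the $\tilde\Gamma_n$ are obtained by fixed-endpoint \emph{homotopies} of the segments of $\tilde\gamma$ supported in $N_n$. For $n\ge m$ this furnishes a 2-chain $\omega$ supported in $N_m$ with $\partial_2\omega=\tilde\Gamma_n+\tilde\Gamma_m$ (mod $\p M$), so $m_\omega\equiv 0$ on $M\setminus N_m$ and the normalized $\sigma_n/\sigma_m$ is $+1$ there. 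Without this homotopy observation the stabilization step is genuinely incomplete.

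A second, smaller gap is in the Sobolev verification. You argue that the distributional contribution to $\nabla(\sigma u)$ from the jump set of $\sigma$ inside $\p\tilde P\setminus\Gamma$ vanishes because ``the two-sided traces of $u$ vanish there.'' But $\p\tilde P$ is only assumed closed, not Lipschitz or even rectifiable, so traces on it need not be defined. The clean way around this---the one the paper uses---is to first take $u_i\in C^\infty_0(\tilde\Omega_i)$, whose (compact) support avoids $\p\tilde P$ and hence lies in $M\setminus N_n$ for $n$ large; on that set $\sigma=\sigma_n$, so $\sigma u_i=\Phi_n u_i\in W^{1,2}_0(M;\Gamma)$ with the energy identity coming directly from \eqref{bGequality}, and one then extends to $W^{1,2}_0(\tilde\Omega_i)$ by density. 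Your ``limit-free'' alternative via a 2-chain $\omega$ with $\p_2\omega=\tilde\gamma+\Gamma+\beta$ has the same issues in sharper form: the mod-2 multiplicity of a singular 2-chain is not well defined pointwise without first replacing the chain by a piecewise-smooth or simplicial representative, so it does not actually sidestep the approximation argument.
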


In the special case that $\p\tilde P$ contains a piecewise $C^1$ cut, say $\tilde\Gamma$, that is homologous to $\Gamma$, then $\oplus_i W^{1,2}_0(\tilde\Omega_i)$ can be identified with a subspace of $W^{1,2}_0(M;\tilde\Gamma)$ and \Cref{lem:approx} follows directly from \Cref{prop:DG}.

\begin{proof}
We first define $\Phi$ on $C^\infty_0(\tilde\Omega_i)$ for a fixed $i$. Let $\{K_n\}$ be a collection of compact sets in $\tilde\Omega_i$ with $K_1 \subset K_2 \subset \cdots$ and $\cup_n K_n = \tilde \Omega_i$, and let $\tilde\Gamma$ denote the subset of $\p\tilde P$ that is homologous to $\Gamma$. For each $n$ we can use \Cref{lem:chaincut} to find a piecewise $C^1$ cut, $\tilde\Gamma_n$, that is homologous to $\tilde\Gamma$ (and hence to $\Gamma$) and is disjoint from $K_n$.

By \Cref{prop:DG} there is a unitary operator $\Phi_n$ on $L^2(M)$ that maps $W^{1,2}_0(M;\tilde\Gamma_n)$ onto $W^{1,2}_0(M;\Gamma)$ and satisfies
\begin{equation}
\label{bGequality}
	b_{_{\Gamma}}(\Phi_n u, \Phi_n v) = b_{_{\tilde\Gamma_n}}(u,v)
\end{equation}
for all $u,v \in W^{1,2}_0(M; \tilde\Gamma_n)$. Changing the signs of some of the $\Phi_n$ if necessary, we can assume that if $\supp u \subset K_m$ for some $m$, then $\Phi_n u = \Phi_m u$ for all $n \geq m$. Thus for any $u_i \in C^\infty_0(\tilde\Omega_i)$ we define
\begin{equation}
\label{iPhi}
    \Phi^i u_i = \lim_{n \to \infty} \Phi_n u_i.
\end{equation}
The limit exists in $W^{1,2}_0(M;\Gamma)$ because it is eventually constant.

Since each $\Phi_n$ is a unitary map on $L^2(M)$, we get $\|\Phi^i u_i \|_{L^2(M)} = \|u_i \|_{L^2(M)}$. Moreover, since $\supp u_i$ will be contained in a single connected component of $M^o\backslash \tilde\Gamma_n$ for large enough $n$, we have from \eqref{bGequality} that
\begin{equation}
\label{Phi0}
	b_{_{\Gamma}}(\Phi^i u_i, \Phi^i u_i) 
    = \lim_{n\to\infty} b_{_{\Gamma}}(\Phi_n u_i, \Phi_n u_i) 
    = \lim_{n\to\infty} b_{_{\tilde\Gamma_n}}(u_i, u_i)
 = \int_{\tilde\Omega_i} |\nabla u_i |^2\,dV.
\end{equation}
By continuity we can therefore extend $\Phi^i$ to a bounded linear map $W^{1,2}_0(\tilde\Omega_i) \to W^{1,2}_0(M;\Gamma)$ that satisfies \eqref{Phi0}. Repeating for each $i$ and letting $\Phi = \oplus_i \Phi_i$ completes the proof.
\end{proof}

With this lemma at our disposal, the rest of the proof of \Cref{thm:cts} is identical to that of \Cref{thm:main}.

%%%%%%%%%%%%%%%%%%%%%%%%%%%%%%%%%%
%%%%%%%%%%%%%%%%%%%%%%%%%%%%%%%%%%
%%%%%%%%%%%%%%%%%%%%%%%%%%%%%%%%%%
%%%%%%%%%%%%%%%%%%%%%%%%%%%%%%%%%%
%%%%%%%%%%%%%%%%%%%%%%%%%%%%%%%%%%

\section{Applications}
\label{sec:app}

We now give some applications of \Cref{thm:main} (more precisely, its generalization \Cref{thm:cts}) and \Cref{cor:strict}, one of which was already stated in the introduction. A common theme here is that the topological hypotheses of the theorem are reduced to the computations of relative homology groups.  Note that by Poincar\'e--Lefschetz duality we have $H_1(M,\p M; \bbZ_2) \cong H^1(M; \bbZ_2)$; see, for instance, \cite[Thm.~3.34]{Hat}.

\subsection{The disk}
We consider radial partitions of the disk $M=\{(x,y) : x^2+y^2 \leq 1\}$, as in \Cref{fig:diskrad}. The radial $k$-partition is only Courant sharp for the corresponding partition Laplacian when $1 \leq k \leq 5$. In particular, the radial partitions for $k=1,2,4$ are bipartite and hence minimal, but the minimal 3- and 5-partitions of the disk are not bipartite \cite[Prop.~9.2]{HHOT}. The radial configurations shown in \Cref{fig:diskrad} are each minimal within a certain topological class of partitions, by \Cref{thm:cts}, so they are plausible candidates for the minima.

We now describe the classes in which they are minimal. %for $k=3$ and $k=5$. 
The result for $k=3$ appeared as \Cref{cor:disk} in the introduction; for convenience we restate it here.

\begin{figure}
\begin{tikzpicture}[ scale=0.7]
	\draw[thick,dashed] (2,0) arc[radius=2, start angle=0, end angle=360];
	\draw[ultra thick, blue] (0,0) -- (0,2);
	\draw[ultra thick, blue] (0,0) -- ({2*cos(210)},{2*sin(210)});
	\draw[ultra thick, blue] (0,0) -- ({2*cos(330)},{2*sin(330)});
\end{tikzpicture}
\hspace{2cm}
\begin{tikzpicture}[ scale=0.7]
	\draw[thick,dashed] (2,0) arc[radius=2, start angle=0, end angle=360];
	\draw[ultra thick, blue] (0,0) -- (0,2);
	\draw[ultra thick, blue] (0,0) -- ({2*cos(162)},{2*sin(162)});
	\draw[ultra thick, blue] (0,0) -- ({2*cos(234)},{2*sin(234)});
	\draw[ultra thick, blue] (0,0) -- ({2*cos(306)},{2*sin(306)});
	\draw[ultra thick, blue] (0,0) -- ({2*cos(18)},{2*sin(18)});
\end{tikzpicture}
%\hspace{2cm}
%\begin{tikzpicture}[ scale=0.7]
%	\draw[thick,dashed] (2,0) arc[radius=2, start angle=0, end angle=360];
%	\draw[ultra thick, blue] (0,-2) -- (0,2);
%	\draw[ultra thick, blue] ({-2*cos(210)},{-2*sin(210)}) -- ({2*cos(210)},{2*sin(210)});
%	\draw[ultra thick, blue] ({-2*cos(330)},{-2*sin(330)}) -- ({2*cos(330)},{2*sin(330)});
%\end{tikzpicture}
\caption{Radial 3- and 5-partitions of the disk.}
\label{fig:diskrad}
\end{figure}
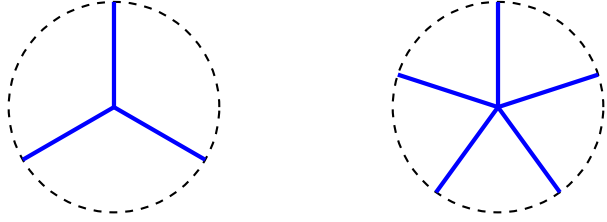

\begin{theorem}
\label{cor:disk35}
The radial $k$-partition, $k\in\{3,5\}$, is minimal among all $k$-partitions whose boundary set contains a curve from the origin to the boundary of the disk. Moreover, it has strictly lower energy than any $k$-partition whose boundary set contains such a curve but is not homologous to it.
\end{theorem}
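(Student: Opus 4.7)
The plan is to apply \Cref{thm:cts} and \Cref{cor:strict} to a suitable 1-chain. Let $\gamma \in C_1(M; \bbZ_2)$ be a continuous curve from the origin to $\p M$. I would first verify that $\cP^0_k(\gamma)$ is precisely the class of $k$-partitions whose boundary set contains a curve from the origin to $\p M$. The nontrivial direction amounts to showing that any two such curves are homologous relative to $\p M$: their difference is a relative 1-cycle, hence a class in $H_1(M, \p M; \bbZ_2) \cong H^1(D^2; \bbZ_2) = 0$ by Poincar\'e--Lefschetz duality on the contractible disk. Thus the theorem reduces to exhibiting the radial $k$-partition, for $k \in \{3, 5\}$, as the nodal partition of a Courant-sharp eigenfunction of $-\Delta^\Gamma$ for any piecewise $C^1$ cut $\Gamma$ homologous to $\gamma$.

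For convenience I would take $\Gamma$ to be a single radial segment and diagonalize $-\Delta^\Gamma$ by separation of variables in polar coordinates $(r, \theta)$. The anti-continuity conditions \eqref{DPboundary} across $\Gamma$ translate into $a(0) = -a(2\pi)$ and $a'(0) = -a'(2\pi)$ for the angular factor, producing the half-integer angular eigenvalues $(n+\tfrac12)^2$ with two-dimensional eigenspaces spanned by $\cos\!\big((n+\tfrac12)\theta\big)$ and $\sin\!\big((n+\tfrac12)\theta\big)$. Combined with the Dirichlet radial problem, the spectrum of $-\Delta^\Gamma$ is $\{j_{n+1/2,m}^2 : n \geq 0,\, m \geq 1\}$, each value occurring with multiplicity at least two. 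Standard bounds on Bessel zeros yield the ordering
\[
\lambda_1 = \lambda_2 = \pi^2, \qquad \lambda_3 = \lambda_4 = j_{3/2,1}^2, \qquad \lambda_5 = \lambda_6 = j_{5/2,1}^2,
\]
once one checks $j_{3/2,1} < j_{5/2,1} < 2\pi = j_{1/2,2}$, and similarly that no radial Bessel zero intrudes below $j_{5/2,1}$.

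For $k = 3$, the eigenfunction $\psi = J_{3/2}(j_{3/2,1} r) \sin(\tfrac{3\theta}{2})$ vanishes both on $\Gamma$ (the endpoints $\theta = 0, 2\pi$ are zeros of $\sin(\tfrac{3\theta}{2})$) and on the rays $\theta = 2\pi/3,\; 4\pi/3$, producing exactly three nodal sectors; since $\ell(\psi) = 3$, it is Courant sharp and its nodal partition is the radial 3-partition. An analogous argument for $\psi = J_{5/2}(j_{5/2,1} r) \sin(\tfrac{5\theta}{2})$ identifies the radial 5-partition with the nodal partition of a Courant-sharp eigenfunction at $\lambda_5$. \Cref{thm:cts} then yields $\Lambda(P_{\mathrm{rad}}) \leq \Lambda(\tilde P)$ for all $\tilde P \in \cP^0_k(\gamma)$, giving the first assertion. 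For the strict inequality, if a competitor has piecewise $C^1$ boundary set $\p\tilde P$ not homologous to $\gamma$, \Cref{cor:strict} immediately delivers $\Lambda(P_{\mathrm{rad}}) < \Lambda(\tilde P)$.

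The main obstacle is the spectral-theoretic input: correctly deriving the angular boundary conditions across the slit, identifying the half-integer Bessel spectrum, and verifying the low-end ordering of eigenvalues with the right multiplicities so that Courant sharpness genuinely holds. Once this bookkeeping is done, the conclusion is a direct application of the already-established \Cref{thm:cts} and \Cref{cor:strict}, and no additional partition-theoretic or topological argument beyond the homology computation $H_1(D^2, \p D^2; \bbZ_2) = 0$ is required.
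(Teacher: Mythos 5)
Your proof follows essentially the same route as the paper: identify $\Gamma$ with a single radial segment, compute that any curve from the origin to $\partial M$ is homologous to $\Gamma$ because the concatenation is a relative $1$-cycle and $H_1(D,\partial D;\bbZ_2)=0$, note the radial $k$-partition is the nodal partition of a Courant-sharp eigenfunction of $-\Delta^\Gamma$, and conclude via \Cref{thm:cts} and \Cref{cor:strict}. The one difference is that you actually supply the spectral input — the anti-periodic angular problem, the half-integer Bessel spectrum $\{j_{n+1/2,m}^2\}$, the doubling of multiplicities, and the ordering check $j_{1/2,1}<j_{3/2,1}<j_{5/2,1}<j_{1/2,2}$ — whereas the paper simply asserts (in the text preceding the theorem) that the radial $k$-partition is Courant sharp for $k\le 5$ and leaves the verification implicit. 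This is a useful addition; the computation is correct, and the choice of $\sin\bigl((k/2)\theta\bigr)$ within the two-dimensional eigenspace makes the nodal partition literally the radial one with a ray along $\Gamma$. One small overstatement: you claim $\cP^0_k(\gamma)$ is \emph{precisely} the set of $k$-partitions whose boundary set contains a curve from the origin to $\partial M$, but only the inclusion (that every such partition lies in $\cP^0_k(\gamma)$) is proved or needed; establishing the reverse inclusion would require a separate argument about reconstructing a single path from a homologous $1$-chain, which you do not give and which is not required for the theorem.
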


\begin{proof}
Let $P$ denote the radial $k$-partition in question. This is a nodal partition for $-\DG$, where $\Gamma = \{0\} \times [0,1]$. Suppose $\tilde P$ is a $k$-partition whose nodal set contains a curve from the origin to some point on the boundary. Denoting this curve by $\tilde\Gamma$, we see that the concatenation of $\Gamma$ with $\tilde\Gamma$ is a curve with both endpoints on the boundary. This means $\Gamma - \tilde\Gamma$ is a relative 1-cycle, and hence is null-homologous because $H_1(D, \p D; \bbZ_2)$ is trivial,
so the result follows from \Cref{thm:cts}.
%. It follows that $\tilde P \in \cP_k(P)$ and so the result follows from \Cref{thm:main}.
\end{proof}

\subsection{The sphere}

Now consider the 3-partition of $S^2$ given by the meridians $0^\circ$, $120^\circ$ and $-120^\circ$. In \cite{helffer2010spectral} this is called the \textbf{Y}-partition, and it was shown to be the unique (up to rotations) minimal 3-partition of the sphere. To the best of our knowledge, this is the only rigorously known example of a non-bipartite minimal partition. There are two main steps in the proof:
\begin{enumerate}
	\item[(i)] If the boundary set of a minimal 3-partition of $S^2$ contains two antipodal points, then it is (up to rotation) the \textbf{Y}-partition.
	\item[(ii)] The boundary set of any minimal 3-partition of $S^2$ contains two antipodal points.
\end{enumerate}

On the other hand, \Cref{thm:cts} easily implies the following, since $H_1(S^2; \bbZ_2)$ is trivial.

\begin{theorem}
\label{cor:sphere}
The \textbf{Y}-partition is minimal among all 3-partitions whose boundary set contains a curve joining two antipodal points.  Moreover, it has strictly lower energy than any 3-partition whose boundary set contains such a curve but is not homologous to it.
\end{theorem}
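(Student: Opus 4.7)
The plan is to mimic the strategy used for the disk in Theorem \ref{cor:disk35}, the main new wrinkle being that different competitor partitions $\tilde P$ may have their distinguished curve lying between different antipodal pairs. I would handle this by exploiting the rotational symmetry of $S^2$, replacing the specific Y-partition by an appropriate rotated Y-partition that is tailored to each competitor.

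Let $P_Y$ denote the Y-partition with triple junctions at the north and south poles $N,S$. Given any pair of antipodal points $p,-p$, choose a rotation $R$ of $S^2$ sending $\{N,S\}$ to $\{p,-p\}$ and set $P_Y^p := R(P_Y)$. Rotations are isometries of $S^2$, so each subdomain of $P_Y^p$ is isometric to the corresponding subdomain of $P_Y$, hence $\Lambda(P_Y^p) = \Lambda(P_Y)$. I would then argue that $P_Y^p$ is the nodal partition of a Courant-sharp eigenfunction of $-\Delta^{\Gamma_p}$, where $\Gamma_p$ is a single geodesic arc from $p$ to $-p$: the boundary set $\p P_Y^p$ consists of three arcs from $p$ to $-p$, and the difference between $\p P_Y^p$ (as a $\bbZ_2$ 1-chain) and $\Gamma_p$ is a 1-cycle in $S^2$, which is null homologous since $H_1(S^2;\bbZ_2)=0$. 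Hence $\p P_Y^p$ is homologous to $\Gamma_p$, and the Courant-sharp eigenfunction of $-\Delta^{\Gamma_p}$ whose nodal partition is $P_Y^p$ is obtained by pulling back the known Y-eigenfunction via the unitary equivalences of \Cref{prop:DG} (and invoking the explicit spectral analysis in \cite{helffer2010spectral}).

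Now let $\tilde P$ be an arbitrary competitor 3-partition whose boundary set contains a curve $\tilde\Gamma$ joining some antipodes $p,-p$. Let $\tilde\gamma$ and $\gamma_p$ represent $\tilde\Gamma$ and $\Gamma_p$ respectively. Both $\tilde\gamma$ and $\gamma_p$ have boundary $p+(-p)$ in $\bbZ_2$, so $\tilde\gamma - \gamma_p$ is an absolute $1$-cycle, which is null homologous because $H_1(S^2;\bbZ_2)=0$. Hence $\tilde\gamma$ and $\gamma_p$ are homologous, so $\tilde P \in \cP^0_3(\gamma_p)$, and \Cref{thm:cts} yields
\[
\Lambda(P_Y) \;=\; \Lambda(P_Y^p) \;=\; \lambda_3(\gamma_p) \;\leq\; \Lambda(\tilde P).
\]
For the strict inequality, if $\p\tilde P$ is a piecewise $C^1$ cut that is not homologous to $\tilde\Gamma$, then it is also not homologous to $\gamma_p$ (since $\tilde\gamma$ and $\gamma_p$ are homologous), and \Cref{cor:strict} applied to the Courant-sharp partition $P_Y^p$ for $-\Delta^{\Gamma_p}$ gives $\Lambda(P_Y)=\Lambda(P_Y^p)<\Lambda(\tilde P)$.

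The main obstacle I anticipate is rigorously justifying that $P_Y^p$ is in fact a \emph{Courant-sharp} nodal partition of $-\Delta^{\Gamma_p}$. The homology computation, together with \Cref{lem:equality}, only guarantees that some eigenfunction of $-\Delta^{\Gamma_p}$ has $P_Y^p$ as its nodal partition; showing that the corresponding eigenvalue is exactly $\lambda_3(\Gamma_p)$ is genuine spectral information about the partition Laplacian on $S^2$ and appears to require either transplanting the double-cover calculation of \cite{helffer2010spectral} through \Cref{prop:DG}, or a direct analysis using the rotational symmetry of $S^2$ about the axis through $\pm p$ to reduce $-\Delta^{\Gamma_p}$ to a family of one-dimensional problems.
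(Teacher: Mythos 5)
Your proposal is correct and takes essentially the same route as the paper, but is substantially more explicit. The paper's justification is a single sentence (``\Cref{thm:cts} easily implies the following, since $H_1(S^2; \bbZ_2)$ is trivial''), and your proposal fills in a real subtlety that the paper elides: a competitor $\tilde P$ may have its distinguished curve joining an antipodal pair $\{p,-p\}$ \emph{different} from the pair $\{N,S\}$ at which the reference $\textbf{Y}$-partition has its triple points. Since $S^2$ has empty boundary, a curve from $N$ to $S$ and a curve from $p$ to $-p$ have distinct 0-boundaries and hence are not even comparable in $H_1(S^2,\p S^2;\bbZ_2)$; the fix must be to rotate the reference partition to match, exactly as you do. This is worth spelling out, and your use of $\Lambda(P_Y^p)=\Lambda(P_Y)$ together with the $\bbZ_2$-cycle computation for $\p P_Y^p - \gamma_p$ is correct.

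Your final worry about Courant-sharpness of $P_Y^p$ is legitimate but is not a genuine obstacle, and you should not leave it dangling. The only spectral input needed is that $P_Y$ itself (with triple points at $N$, $S$) is the nodal partition of a Courant-sharp eigenfunction of $-\Delta^{\Gamma}$ for $\Gamma$ a meridian from $N$ to $S$; this is exactly what the double-cover analysis of \cite{helffer2010spectral} establishes (and what the paper implicitly invokes when it cites that reference). The transfer to $P_Y^p$ then requires no new computation: a rotation $R$ with $R(\{N,S\})=\{p,-p\}$ is an isometry of $S^2$, so $u\mapsto u\circ R^{-1}$ is a unitary equivalence between $-\Delta^{\Gamma}$ and $-\Delta^{R(\Gamma)}$ preserving eigenvalues, their ordering, and nodal domains; then \Cref{prop:DG} passes from $R(\Gamma)$ to $\Gamma_p$ because both are arcs from $p$ to $-p$ and hence homologous in $S^2$. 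So the ``direct analysis using rotational symmetry about the axis through $\pm p$'' you contemplate is unnecessary; all the axisymmetry you need is already at $\{N,S\}$ and is carried along by $R$.

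Your argument for the strict inequality via \Cref{cor:strict} is correct, with the implicit (and necessary) assumption that $\p\tilde P$ is a piecewise $C^1$ cut so that the homology condition in \Cref{cor:strict} is meaningful.
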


In light of the results in \cite{helffer2010spectral}, \Cref{cor:sphere} is not new. However, it provides a different route to the proof of minimality of the \textbf{Y}-partition, by showing that \emph{any} 3-partition whose boundary set contains a curve joining two antipodal points\footnote{If the boundary set of a non-bipartite 3-partition of $S^2$ contains two antipodal points, then it necessarily contains a curve joining them, by \cite[Prop~4.3]{helffer2010spectral}.} has energy at least as big as the \textbf{Y}-partition. That is, \Cref{cor:sphere}  is neither a stronger nor weaker statement than (i), but yields the same conclusion when combined with (ii).

\subsection{The torus}
\label{sec:torus}

Following \cite{HHOtori}, we let $\mathsf{T}(1,b)$ denote the 2-torus obtained by identifying opposite sides of $[0,1] \times [0,b]$, and let $P_k(b)$ denote the $k$-partition of $\mathsf{T}(1,b)$ generated by $k$ uniformly spaced vertical loops, $\{i/k\} \times S^1$.

It is known that $P_k(b)$ is minimal only when $b$ is sufficiently small, so it is natural to ask where the transition occurs, i.e., what is the value of
\begin{equation}
	b_k = \sup \big\{b : P_k(b) \text{ is a minimal partition of $\mathsf{T}(1,b)$} \big\}.
\end{equation}
 In \cite{HHOtori} it was shown that $b_k = 2/k$ if $k$ is even and 
\begin{equation}
\label{bkbound}
	\frac1k \leq b_k \leq \frac{2}{\sqrt{k^2-1}}
\end{equation}
if $k$ is odd. In \cite{BNLtori} the lower bound was improved to $b_k \geq b_k^{\textsf S}$ for some number\footnote{The number $b_k^{\textsf S}$ is defined by an eigenvalue optimization problem on the strip $\bbR \times (0,b)$; see \cite[Eq.~(1.4)]{BNLtori}.}  satisfying $1/k < b_k^{\textsf S} < 1/\sqrt{k^2-1}$, and it was conjectured that $b_k$ is given by the upper bound in \eqref{bkbound}. 

Defining a ``vertical loop" to be a closed curve that winds about $\mathsf{T}(1,b)$ once in the $y$ direction but zero times in the $x$ direction, we get the following from \Cref{thm:cts}.
 
\begin{theorem}
\label{thm:torus}
If $k \geq 3$ is odd and $0 < b \leq 2/\sqrt{k^2-1}$, then $P_k(b)$ is minimal among all $k$-partitions whose boundary set contains a vertical loop.  Moreover, it has strictly lower energy than any $k$-partition whose boundary set contains such a loop but is not homologous to it.
\end{theorem}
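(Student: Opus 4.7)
The plan is to realize $P_k(b)$ as the nodal partition of a Courant-sharp eigenfunction of $-\DG$ with $\Gamma = \{0\}\times S^1$ a single vertical loop, and then invoke \Cref{thm:cts} and \Cref{cor:strict}.

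First I would compute the spectrum of $-\DG$ by separation of variables. Cutting $\mathsf{T}(1,b)$ along $\Gamma$ unfolds the torus into the cylinder $[0,1]\times S^1$, and the conditions in \eqref{DPboundary} translate into anti-periodic boundary conditions in $x$ together with the natural periodic conditions in $y$. A product ansatz then yields the eigenvalues
\begin{equation*}
  \lambda_{n,m} = (2n+1)^2\pi^2 + \left(\tfrac{2m\pi}{b}\right)^2, \qquad n,\, m \geq 0,
\end{equation*}
with eigenfunctions spanned by products of $\{\cos((2n+1)\pi x),\sin((2n+1)\pi x)\}$ and $\{\cos(2m\pi y/b),\sin(2m\pi y/b)\}$. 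For odd $k$, the eigenfunction $\psi(x,y) = \sin(k\pi x)$ belongs to the eigenvalue $k^2\pi^2$ and vanishes precisely on the $k$ loops $\{i/k\}\times S^1$, $i = 0, 1, \ldots, k-1$, so its nodal partition is exactly $P_k(b)$.

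Next I would check Courant sharpness. The hypothesis $b \leq 2/\sqrt{k^2-1}$ is equivalent to $(2\pi/b)^2 \geq (k^2-1)\pi^2$, which forces $\lambda_{n,m} \geq k^2\pi^2$ for every $m \geq 1$. Hence the eigenvalues strictly below $k^2\pi^2$ are precisely $(2n+1)^2\pi^2$ for $0 \leq n \leq (k-3)/2$, each of multiplicity two (from $\cos$ and $\sin$ in $x$), accounting for $k-1$ eigenvalues in total. Thus $\lambda_k(\Gamma) = k^2\pi^2 = \lambda_\psi$, so $\ell(\psi) = k$, and since $\psi$ has $k$ nodal domains it is Courant sharp.

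Having identified $P_k(b)$ as a Courant-sharp nodal partition of $-\DG$, the theorem follows from \Cref{thm:cts} and \Cref{cor:strict}. Since $H_1(\mathsf{T}(1,b);\bbZ_2) \cong \bbZ_2^2$ classifies 1-cycles by their winding parities in $x$ and $y$, every vertical loop represents the class $(0,1)$, as does $\Gamma$; thus any $k$-partition whose boundary contains a vertical loop lies in $\cP^0_k(\gamma)$ and satisfies $\Lambda(P_k(b)) \leq \Lambda(\tilde P)$ by \Cref{thm:cts}. Since $\partial P_k(b)$ is a union of $k$ parallel vertical loops and $k$ is odd, $[\partial P_k(b)] = k\cdot (0,1) = (0,1) = [\Gamma]$ in $\bbZ_2^2$, so \Cref{cor:strict} yields $\Lambda(P_k(b)) < \Lambda(\tilde P)$ whenever $\partial\tilde P$ is a piecewise $C^1$ cut not homologous to $\Gamma$. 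The main obstacle is the eigenvalue count: the bound $b \leq 2/\sqrt{k^2-1}$ is sharp, and at its extremal value the product mode $(n,m)=(0,1)$ contributes additional multiplicity at $\lambda = k^2\pi^2$, so one must verify that Courant sharpness persists at the endpoint.
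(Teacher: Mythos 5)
Your proof is correct and follows the same route as the paper: identify $P_k(b)$ as the nodal partition of a Courant-sharp eigenfunction of $-\DG$ with $\Gamma = \{0\}\times S^1$ (the paper leaves this verification implicit; you carry it out) and invoke \Cref{thm:cts} and \Cref{cor:strict}, together with the fact that every vertical loop is homologous to $\Gamma$. The concern you raise at the end about the endpoint $b = 2/\sqrt{k^2-1}$ is a non-issue: your own count shows that exactly $k-1$ eigenvalues lie \emph{strictly} below $k^2\pi^2$, and since $\ell(\psi) = \min\{n : \lambda_n(\Gamma) = \lambda_\psi\}$ depends only on that count, the additional multiplicity arriving at $\lambda = k^2\pi^2$ when $b$ hits the endpoint does not change $\ell(\psi) = k$, so Courant sharpness holds there as well.
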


\begin{proof}
It is easily verified that $P_k(b)$ is the nodal partition of an eigenfunction of $-\DG$, where $\Gamma = \{0\} \times S^1$, and it is Courant sharp if and only if $b \leq 2/\sqrt{k^2-1}$. To complete the proof we observe that a vertical loop is precisely a curve that is homologous to $\Gamma$.
\end{proof}

Since $P_k(b)$ is a nodal partition, we can conclude from \cite[Thm.~6]{BCCM2} that it locally minimizes the energy $\Lambda$ for $b < 2/\sqrt{k^2-1}$ but not for $b > 2/\sqrt{k^2-1}$. This gives an alternate proof of the upper bound on $b_k$ in \eqref{bkbound}, and in fact gives the stronger result that $P_k(b)$ is not even a \emph{local} minimum for $b$ larger than this value.

A result equivalent to \Cref{thm:torus} was shown in \cite[Prop.~2.7]{BNLtori} by a double covering argument. The equivalence follows from the observation that a $k$-partition of $\mathsf{T}(1,b)$ lifts to a $2k$-partition of the double cover $\mathsf{T}(2,b)$ if and only if its boundary set contains a vertical loop.

\subsection{The cylinder}
The thin cylinder with Neumann boundary conditions was studied in \cite{HHOannulus}, but no results are known for the corresponding Dirichlet problem. We consider $\mathsf{C}(1,b) = S^1 \times (0,b)$, where $S^1$ is the circle with length 1, so the Laplacian eigenfunctions are
\begin{equation}
\label{cylef}
	\cos(2m \pi x) \sin\left(\frac{n\pi y}{b}\right), \ m \geq 0, \ n \geq 1, \qquad 
	\sin(2m \pi x) \sin\left(\frac{n\pi y}{b}\right), \ m, n \geq 1, \qquad 
\end{equation}
with eigenvalues $\lambda_{m,n} = (2m \pi)^2 + (n\pi/b)^2$.

We let $P_k(b)$ denote the $k$-partition of $\mathsf{C}(1,b)$ generated by $k$ uniformly spaced vertical lines, $\{ i/k\} \times [0,b]$. When $k$ is even this is a nodal partition of a Laplacian eigenfunction, namely $\sin(k \pi x) \sin(\pi y/b)$, and it is easily shown that it is Courant sharp, and hence minimal, if and only if $b \leq \sqrt 3/k$.

The case $k$ odd is more interesting.

\begin{theorem}
If $k \geq 3$ is odd and $0 < b \leq \sqrt{3/(k^2-1)}$, then $P_k(b)$ is minimal among all $k$-partitions whose nodal set contains a curve between the two boundary circles.  Moreover, it has strictly lower energy than any $k$-partition whose boundary set contains such a curve but is not homologous to it.
\end{theorem}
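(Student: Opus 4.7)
The approach is to mirror the proof of \Cref{thm:torus} for the torus. The plan is to take $\Gamma = \{0\} \times [0,b]$, a single vertical segment joining the two boundary circles of $\mathsf{C}(1,b)$, and then establish two claims: (i) $P_k(b)$ is the nodal partition of a Courant-sharp eigenfunction of $-\DG$ whenever $0 < b \leq \sqrt{3/(k^2-1)}$, and (ii) a curve joining the two boundary circles represents the same relative homology class as $\Gamma$. Granting both, the two conclusions of the theorem follow immediately from \Cref{thm:cts} and \Cref{cor:strict} applied to $P_k(b)$.

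For (i), I would separate variables for $-\DG$. The Dirichlet condition at $y=0,b$ forces the $y$-profile $\sin(n\pi y/b)$, $n\geq 1$. The anti-continuity and matching-derivative conditions across $\Gamma$ translate into the anti-periodic conditions $f(0) = -f(1)$ and $f'(0) = -f'(1)$ on the $x$-profile. A short calculation shows that the admissible wavenumbers are $\omega = (2m+1)\pi$, $m \geq 0$, each yielding a two-dimensional eigenspace spanned by $\cos((2m+1)\pi x)$ and $\sin((2m+1)\pi x)$. The spectrum of $-\DG$ is therefore
\begin{equation*}
	\lambda_{j,n} = (j\pi)^2 + \frac{n^2\pi^2}{b^2}, \qquad j \in \{1,3,5,\ldots\}, \; n \geq 1,
\end{equation*}
each of multiplicity two, and for odd $k$ the function $\psi = \sin(k\pi x)\sin(\pi y/b)$ is a $\lambda_{k,1}$-eigenfunction whose zero set consists of the $k$ vertical segments $\{i/k\}\times[0,b]$, giving precisely the partition $P_k(b)$.

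The main obstacle is the eigenvalue count certifying Courant-sharpness. Among eigenvalues strictly below $\lambda_{k,1}$, those with $n=1$ correspond to odd $j \in \{1,3,\ldots,k-2\}$, yielding $(k-1)/2$ distinct eigenvalues of multiplicity two, hence $k-1$ counted with multiplicity. The smallest eigenvalue with $n \geq 2$ is $\lambda_{1,2} = \pi^2 + 4\pi^2/b^2$, and the identity $\lambda_{j,n} - \lambda_{1,2} = (j^2-1)\pi^2 + (n^2-4)\pi^2/b^2$ shows $\lambda_{j,n} \geq \lambda_{1,2}$ for every odd $j \geq 1$ and $n \geq 2$. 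The inequality $\lambda_{1,2} \geq \lambda_{k,1}$ is equivalent to $b^2 \leq 3/(k^2-1)$, which is exactly the hypothesis. Under this bound, $\lambda_k(\Gamma) = \lambda_{k,1}$ and $\psi$ has spectral position $k$ matching its $k$ nodal domains, so it is Courant-sharp; at the endpoint $b^2 = 3/(k^2-1)$ the eigenvalue $\lambda_{k,1}$ acquires additional multiplicity from $\lambda_{1,2}$, but the spectral position of $\psi$ is unchanged by the min-convention in \eqref{SPosition}.

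For (ii), Poincar\'e--Lefschetz duality gives $H_1(\mathsf{C}(1,b), \p\mathsf{C}(1,b); \bbZ_2) \cong H^1(\mathsf{C}(1,b); \bbZ_2) \cong \bbZ_2$, with the nontrivial class detected by mod-$2$ intersection number with the equatorial loop $\alpha = S^1 \times \{b/2\}$. Since $\alpha$ separates $\mathsf{C}(1,b)$ into two components each containing one boundary circle, every curve joining the two boundary circles crosses $\alpha$ an odd number of times, hence represents the unique nontrivial class and is homologous to $\Gamma$. Applying \Cref{thm:cts} to $P_k(b)$ yields the minimality assertion, and \Cref{cor:strict} yields the strict inequality against any $k$-partition whose piecewise $C^1$ boundary set fails to be homologous to $\Gamma$.
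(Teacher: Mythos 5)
Your proof is correct and follows essentially the same route as the paper: take $\Gamma$ to be a single vertical segment joining the two boundary circles, identify $P_k(b)$ as the nodal partition of $\sin(k\pi x)\sin(\pi y/b)$ for the anti-periodic Laplacian, verify Courant-sharpness by comparing $\lambda_{k,1}$ with $\lambda_{1,2}$ to recover $b \le \sqrt{3/(k^2-1)}$, and observe $H_1(\mathsf{C},\p\mathsf{C};\bbZ_2)\cong\bbZ_2$ to conclude any curve between the boundary circles is homologous to $\Gamma$. You supply somewhat more detail than the paper (the derivation of the anti-periodic boundary condition, the explicit multiplicity count, the intersection-number argument for nontriviality, and the endpoint case $b^2=3/(k^2-1)$), and you implicitly correct the paper's typo $\Gamma=\{0\}\times S^1$ (which should be $\{0\}\times[0,b]$, a vertical segment, not a horizontal loop).
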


On the other hand, it follows from \cite[Thm.~6]{BCCM2} that for $b > \sqrt{3/(k^2-1)}$ the partition $P_k(b)$ is not even a local minimum.

\begin{proof}
Consider $-\DG$ with $\Gamma = \{0\} \times S^1$. The eigenfunctions are given by \eqref{cylef} but with $m = \frac12, \frac32, \frac52, \ldots$ and $n \in \bbN$, therefore $P_k(b)$ is the nodal partition of the eigenfunction $\sin(k \pi x) \sin(\pi y/b)$, which has $m = \frac{k}{2}$ and $n=1$. To see when this is Courant sharp, we note that $\lambda_{k/2,1}$ is the $k$th eigenvalue of $-\DG$ if and only if $\lambda_{k/2,1} \leq \lambda_{1/2,2}$, which is equivalent to $k^2 + b^{-2} \leq 1 + 4 b^{-2}$ and hence to $b \leq  \sqrt{3/(k^2-1)}$. 
Finally, we note that $H_1(M,\p M; \bbZ_2) = \bbZ_2$ is generated by a curve between the inner and outer boundaries, therefore any such curve must be homologous to $\Gamma$.
\end{proof}

\begin{rem}
In \cite{HHOannulus} it is shown that the nodal set of a minimal 3-partition of a sufficiently thin cylinder, with Neumann conditions on the external boundary, must contain a curve between the two boundary circles. It is essential to the proof that the groundstates of the partition satisfy mixed Dirichlet/Neumann boundary conditions (on the internal/external boundary, respectively), therefore the argument does not work for the Dirichlet problem.
\end{rem}

\subsection*{Acknowledgments}
The authors thank Tom Baird, Bernard Helffer and Peter Kuchment for helpful comments and discussions.  G.B. acknowledges the support of NSF Grant DMS-2247473.  Y.C. was supported by NSF CAREER Grant DMS-2045494. G.C. acknowledges the support of NSERC grant RGPIN-2017-04259.  J.L.M. acknowledges support from the NSF through  NSF Applied Math Grant DMS-2307384 and NSF FRG grant DMS-2152289.  The authors are grateful to the AIM SQuaRE program for hosting them and supporting the initiation of this project.

\bibliographystyle{siam}
\bibliography{nodal}

\end{document}